\documentclass[11pt]{amsart}  
\usepackage{amsmath,amsfonts,amssymb,mathrsfs}
\usepackage[english]{babel}
\usepackage[utf8]{inputenc}
\usepackage[T1]{fontenc}
\usepackage{pgf,tikz,pgfplots}
\pgfplotsset{compat=1.15}
\usepackage{mathrsfs}
\usepackage{bbold}
\usepackage{enumitem}
\usetikzlibrary{arrows}
\newtheorem{theorem}{Theorem}[section]
\newtheorem{thm}[theorem]{Theorem}
\newtheorem{lemma}[theorem]{Lemma}

\newtheorem{prop}[theorem]{Proposition}
\newtheorem{coro}[theorem]{Corollary}

\newtheorem{remark}{Remark}[section]

\def\R{\mathbb R}

\def\N{\mathbb N}
\def\Z{\mathbb Z}

\numberwithin{equation}{section}

\usepackage{hyperref}

\DeclareMathOperator{\dive}{div}

\DeclareMathOperator{\supp}{supp}

\usepackage{tikz}
\usetikzlibrary{decorations}
\usetikzlibrary{decorations.pathmorphing}
\usetikzlibrary{decorations.pathreplacing}
\usetikzlibrary{decorations.shapes}
\usetikzlibrary{decorations.text}
\usetikzlibrary{decorations.markings}
\usetikzlibrary{decorations.fractals}
\usetikzlibrary{decorations.footprints}
\tikzset{every picture/.style={execute at begin picture={
   \shorthandoff{:;!?};}
}}
\begin{document}
\title[Système d'Euler-Navier-Stokes]{Multiphase formulation of the Vlasov-Navier-Stokes equations}

\author[Valentin Lemarié]{Valentin Lemarié}
 {\begin{center}
\begin{abstract} 
In this paper, we study a particular family of solutions of the Vlasov-Navier-Stokes system posed on $\R^d$ (with $d\geq 2$), and show their convergence to the unique solution of the pressureless Euler-Navier-Stokes system. A global existence result for the latter system, in the small data regime, was established in \cite{MonENS}. Here we place ourselves in a multiphase framework, introduced and studied by Zakharov in \cite{Zakharov1,Zakharov2}, in order to carry out an analogous analysis for a system that we will call multiphase pressureless Euler-Navier-Stokes. We then study the single-phase limit and obtain a rigorous link between the Vlasov-Navier-Stokes system and the pressureless Euler-Navier-Stokes system.
 \end{abstract}\end{center}}
\maketitle
\section{Introduction}
In this article, we focus on the link between the Vlasov-Navier-Stokes system and the pressureless Euler-Navier-Stokes system.

We begin by presenting these systems and explaining the reasons for studying their relationship.

The Vlasov-Navier-Stokes system, studied on the whole space $\R^d$ (where $d\geq 2$) is :
\begin{equation}\label{VNS}\left\{\begin{array}{l}
     \partial_t f+v\cdot\nabla_x f+\dive_v\left((u-v)f\right)=0, \\
     \partial_t u +(u\cdot\nabla)u-\Delta u+\nabla p=j_f-\rho_f u,
     \\
     \dive u=0,
\end{array}\right.\end{equation}
with $(t,x,v)\in \R_+^*\times \R^d\times \R^d$ and where we define $$\left\{\begin{array}{l}
     \rho_f(t,x)\mathrel{\mathop:}= \int_{\R^d}f(t,x,v)dv \\
     j_f(t,x)\mathrel{\mathop:}= \int_{\R^d} v f(t,x,v) dv.
\end{array}\right.$$
This system describes the transport of small particles (dispersed phase) immersed in a Newtonian, viscous, incompressible, and homogeneous fluid (continuous phase). The particles are described by the distribution function $f(t,\cdot,\cdot)$ on $\R^d\times \R^d$, while the fluid is described by its velocity $u(t,\cdot)$ and pressure $p(t,\cdot)$.

Here, the effects of collisions, coalescence, and fragmentation between particles are neglected. We also assume a "thin spray" regime, meaning that the volume occupied by the droplets is negligible compared to that of the fluid.

The dispersed phase and the fluid are coupled through a specific interaction (see \cite{Desvillettes}), consisting of a drag term in the Vlasov equation (appearing as a friction term) and a source term in the Navier-Stokes equations known as the Brinkman force, which describes the exchange of momentum between the particles and the fluid.

The mathematical study of this system has attracted attention over the past 20 years due to its many applications in various fields: medicine \cite{Boudin respi, Boudin respi2, Boudin respi3, Moussa thèse}, diesel engine combustion \cite{O Rourke}, and more.

The questions of existence and uniqueness of solutions have been addressed: L.~Boudin et al. constructed global weak solutions in \cite{Boudin}, and Danchin, strongly inspired by the work of D. Han-Kwan et al. in \cite{Daniel} and \cite{Daniel2}, found strong solutions in a Fujita-Kato-type framework in \cite{Danchin}.

By introducing a parameter $\varepsilon$ tending to $0$ into this system, numerous hydrodynamic limits derived from it have been investigated (see, for instance, the works of T. Goudon, P.-E. Jabin, and A. Vasseur in \cite{Goudon1, Goudon2}, those of R. Caflisch and G. C. Papanicolaou in \cite{Caflisch}, as well as the contributions of D. Michel and D. Han-Kwan in \cite{Michel et Daniel}, and L. Ertzbischoff in \cite{LucasToulouse}).

Our aim here is to relate the solutions of the Vlasov-Navier-Stokes system to those of the pressureless Euler-Navier-Stokes system:
\begin{eqnarray}\label{Euler-Navier-Stokes1}
\left\{\begin{array}{l}
      \partial_t \rho+\dive(\rho w)=0, \\
     \partial_t (\rho w)+\dive(\rho w \otimes w)=-\rho(w-u), \\
     \partial_t u+\dive(u \otimes u)+\nabla P=\Delta u+\rho(w-u), \\
     \dive u=0,
\end{array} \right.
\end{eqnarray} with initial data $(\rho,w,u)_{|t=0}=(\rho_0,w_0,u_0)$ around the constant state $$(\overline{\rho}, \overline{w}, \overline{u})=(0,0,0).$$

The unknown functions $\rho=\rho(t,x)$ and $w=w(t,x)$ represent the density and the velocity of the fluid governed by the pressureless Euler equations, respectively, while $u=u(t,x)$ and $P=P(t,x)$ denote the velocity and pressure of the incompressible fluid satisfying the Navier–Stokes equations.

A global existence result was established by Y.-P. Choi, J. Jung, and J. Kim in \cite{Choi}, under the assumption of small initial data belonging to Sobolev spaces with non-critical regularity, along with an additional $L^1$ integrability condition on $\rho_0$ and $u_0$. In \cite{MonENS}, we also studied the global well-posedness of this system, this time adopting an approach based on critical regularity. More precisely, we worked within a framework of indices consistent with the scaling invariances of the system. Indeed, when decoupled, the system consists on the one hand of an Euler-type equation for $w$, whose critical Sobolev regularity is $\frac{d}{2}+1$, and on the other hand of the Navier--Stokes equations for $u$, with critical regularity $\frac{d}{2}-1$. Moreover, the equation for the density $\rho$ implies a one-derivative gap between $w$ and $\rho$, which naturally leads to the choice of $\frac{d}{2}$ as the critical regularity for $\rho$.

More recently, the pressureless Euler--Navier--Stokes system was revisited in \cite{Danchin2} through a completely different perspective based on the conservation structure of the equations. In particular, R. Danchin established global well-posedness in three space dimensions without any smallness assumption on the density, yielding a remarkable extension of the existing theory. Furthermore, the same functional framework was shown to be suitable for the analysis of the Vlasov--Navier--Stokes system in \cite{Danchin}. However, despite these parallel well-posedness results, a rigorous derivation connecting the pressureless Euler--Navier--Stokes and Vlasov--Navier--Stokes systems remains, to the best of our knowledge, an open problem. One of the main achievements of the present work is precisely to establish such a rigorous link within a critical regularity framework.

To better understand the connection between the Euler--Navier--Stokes system \eqref{Euler-Navier-Stokes1} and the Vlasov--Navier--Stokes system \eqref{VNS}, it is instructive to introduce into \eqref{VNS} the monokinetic ansatz
$$
f=\rho\otimes \delta_{v=w}.
$$
At least formally, this ansatz leads to the pressureless Euler--Navier--Stokes system \eqref{Euler-Navier-Stokes1}, which therefore appears as a particular class of solutions to the kinetic model corresponding to complete concentration in velocity. Consequently, the convergence from \eqref{VNS} to \eqref{Euler-Navier-Stokes1} is not associated with a singular scaling limit, but rather with a concentration phenomenon in velocity space.

To rigorously justify this convergence, we rely on the multiphasic representation, originally introduced by V. E. Zakharov in \cite{Zakharov1, Zakharov2}. This formulation allows one to rewrite the kinetic equation as a superposition of pressureless Euler equations indexed by a phase parameter. It was first used by Grenier in the study of the Vlasov--Poisson system \cite{Grenier}, and more recently by A.~Baradat in \cite{Baradat}. A key feature of this representation is that it naturally bridges the kinetic and fluid descriptions: a general kinetic distribution is represented as a superposition of infinitely many phases, whereas the monokinetic regime corresponds to the collapse of all phases into a single velocity field.

The central idea of the present work is that, within this framework, the derivation of the pressureless Euler--Navier--Stokes system from the Vlasov--Navier--Stokes system can be reduced to a stability problem for a multiphasic Euler--Navier--Stokes system. More precisely, we first establish the well-posedness of the multiphasic system
\begin{eqnarray}\label{Euler-Navier-Stokes-multi1}
\left\{\begin{array}{l}
\partial_t \rho_\alpha+\dive(\rho_\alpha w_\alpha)=0, \\
\partial_t (\rho_\alpha w_\alpha)+\dive(\rho_\alpha w_\alpha \otimes w_\alpha )=-\rho_\alpha(w_\alpha-u),\\
\partial_t u+(u\cdot \nabla) u+\nabla P=\Delta u+\displaystyle\int_I \rho_\alpha(w_\alpha-u)d\mu(\alpha), \\
\dive u=0,
\end{array} \right.
\end{eqnarray}
with initial data $((\rho_{\alpha,0},w_{\alpha,0})_{\alpha\in I},u_0)$, where $I\subset\R$ is endowed with a probability measure $\mu$.

Second, we combine this multiphasic framework together with our global well-posedness and stability estimates for \eqref{Euler-Navier-Stokes1}. By considering a sequence of initial data $((\rho_{\alpha,0}^n,w_{\alpha,0}^n)_{\alpha\in I},u_0^n)_{n\in\N}$ converging strongly towards a monokinetic initial configuration $(\rho_0,w_0,u_0)$ in a suitable functional space, we prove that the associated sequence of solutions $((\rho_\alpha^n,w_\alpha^n)_{\alpha\in I},u^n)_{n\in\N}$ converges to the unique solution of the one-phase system \eqref{Euler-Navier-Stokes1} with initial data $(\rho_0,w_0,u_0)$.

This result follows a broader perspective, suggested by A.~Baradat, L. Ertzbischoff and D.~Han-Kwan in \cite{article Lucas} (work in preparation), according to which global a priori estimates together with stability properties with respect to the initial data may provide a systematic framework for deriving hydrodynamic Euler-type models from coupled Vlasov-type kinetic equations.

Finally, by setting
\begin{equation}\label{formule de représentation}
f^{n}(t,x,v):=\int_I \rho_\alpha^n(t,x)\otimes \delta_{v=w_\alpha^n(t,x)},d\mu(\alpha),
\end{equation}
we obtain a sequence $(f^n,u^n)$ of solutions to the Vlasov--Navier--Stokes system. The previous convergence result then implies that $(f^n,u^n)$ converges, in the sense of distributions, to $(\rho\otimes\delta_{v=w},u)$, where $(\rho,w,u)$ is the unique solution of the pressureless Euler--Navier--Stokes system with initial data $(\rho_0,w_0,u_0)$.

\section{Main results and sketch of the proofs}
In this section, we introduce and motivate the functional spaces used.

Then, in a second time, we state the results and outline their proofs.

\subsection{Functional Spaces}~\\
Before stating the main results of this article, we introduce the various notations and definitions used throughout this document.

We denote by $C>0$ a constant independent of time, and we write $f\lesssim g$ to mean that $f\leq C g$. For any Banach space $X$ and any functions $f,g\in X$, we set $\|(f,g)\|_X\mathrel{\mathop:=}\|f\|_X+\|g\|_X$.
\\
For $p\in[1,\infty]$, we denote by $L^p(\R^+;X)$ the set of measurable functions $f : [0,+\infty[ \rightarrow X$ such that $t\mapsto \|f(t)\|_{X}$ belongs to $L^p(\R^+)$, and we write $\|\cdot \|_{L^p(\R^+;X)}\mathrel{\mathop:}=\|\cdot \|_{L^p(X)}$ and $\|\cdot\|_{L_T^p(X)}\mathrel{\mathop:}=\|\cdot\|_{L^p([0,T];X)}$.

Let $(I,\mu)$ be a measurable space with $I\subset \mathbb{R}$ and $\mu=\mu(\alpha)$ a positive measure. For $p\in[1,\infty)$, we denote by $L_\alpha^p(I)$ the space of measurable functions $f:I\to\mathbb{R}$ (or $\mathbb{C}$) such that
\[
\|f\|_{L_\alpha^p(I)}
:=
\left(\int_I |f(\alpha)|^p\, d\mu(\alpha)\right)^{1/p}
<+\infty.
\]
For $p=\infty$, $L_\alpha^\infty(I)$ is endowed with the usual essential supremum norm.

For $p\in[1,\infty]$, we denote by $L_\alpha^p(I;X)$ the set of measurable functions $h:I\to X$ such that $\alpha\mapsto \|h_\alpha\|_X$ belongs to $L_\alpha^p(I)$. We equip this space with the norm
\[
\|h\|_{L_\alpha^p(I;X)}
:=
\bigl\|\alpha\mapsto \|h_\alpha\|_X\bigr\|_{L_\alpha^p(I)}.
\]
For simplicity, we also write
\[
\|h\|_{L_\alpha^p(I;X)}
=
\|h\|_{L_\alpha^p(X)}.
\]

We also denote, for $p,q\in[1,\infty]$, by $L_\alpha^q(I;L^p(\R^+;X))$ the set of measurable functions $g:I\to L^p(\R^+;X)$ such that $\alpha\mapsto \|g(\alpha)\|_{L^p(\R^+;X)}$ belongs to $L^q(I, \mu)$, and we write:
$$\|\cdot\|_{L_\alpha^q L^p(X)}\mathrel{\mathop:}=\|\cdot\|_{L_\alpha^q(I;L^p(\R^+;X))} \quad \text{and} \quad \|\cdot\|_{L_\alpha^q L_T^p(X)}\mathrel{\mathop:}=\|\cdot\|_{L_\alpha^q(I;L^p([0,T];X))}.$$

In this article, we use the homogeneous dyadic Littlewood-Paley decomposition. To define it, we fix a radial, sufficiently regular and non-increasing function $\chi$ with $\supp \chi\subset B(0,4/3)$ and $\chi\equiv 1$ in $B(0,3/4)$, and we set $\varphi(\xi)\mathrel{\mathop:}=\chi(\xi/2)-\chi(\xi)$ so that
$$ \chi+\sum_{j\geq 0}\varphi(2^{-j}\cdot)=1 \ \text{on} \ \R^d \quad \text{and} \quad \sum_{j\in\Z}\varphi(2^{-j}\xi)=1 \ \text{on} \ \R^d\backslash \{0\}. $$
In particular, $\varphi$ is a smooth, positive function on $\R^d$ supported in the annulus
\begin{equation}\label{anneau A}
\mathcal{A}\mathrel{\mathop:}=\{\xi\in\R^d,\:  3/4\leq|\xi|\leq 8/3\}.
\end{equation}

For all $j\in\Z$, the homogeneous dyadic blocks $\dot \Delta_j$ and the low-frequency cutoff operator $\dot S_j$ are defined by
$$\dot \Delta_j \mathrel{\mathop:}=\mathcal{F}^{-1}(\varphi(2^{-j}\cdot)\mathcal{F}u), \quad \dot S_j u\mathrel{\mathop:}=\chi(2^{-j}D),$$
where $\mathcal{F}$ and $\mathcal{F}^{-1}$ denote the Fourier transform on $\R^d$ and its inverse, respectively. By construction, $\dot\Delta_j$ is a localization operator around frequency magnitude $2^j$. From now on, we use the shorter notation:
$$ u_j:=\dot\Delta_j u.$$

Let $\mathcal{S}_h'$ be the set of tempered distributions $u$ on $\R^d$ such that
$$\displaystyle \underset{j\to -\infty}{\lim}\|\dot S_j u\|_{L^\infty}=0.$$
Then we have:
$$u=\sum_{j\in \Z} u_j \ \in \mathcal{S}_h' \quad \text{and} \quad S_j u=\sum_{j'\leq j-1}\dot\Delta_{j'} \ u.$$

By Bernstein's lemma (Lemma 2.1 of \cite{BCD}), using the definition of the annulus \eqref{anneau A} and the dyadic blocks $\dot\Delta_j$, there exists a constant $c_B>0$ such that the following inequality holds for all $u\in\mathcal{S}'(\R^d)$:
\begin{equation}\label{constante de Bernstein}
   c_B^{-\frac{k+1}{2}}2^{jk}\|\dot\Delta_j u\|_{L^2} \leq \|D^k \dot\Delta_j u\|_{L^2}\leq c_B^{\frac{k+1}{2}}2^{jk}\|\dot\Delta_j u\|_{L^2}.
\end{equation}

Using these dyadic blocks, the homogeneous Besov spaces $\dot B_{p,r}^s$ for all $p,r\in[1,+\infty]$ and $s\in\R$ are defined by:
$$\dot B_{p,r}^s\mathrel{\mathop:}=\left\{u\in \mathcal{S}_h' \middle| \|u\|_{\dot B_{p,r}^s}\mathrel{\mathop:}=\|\{2^{js}\|u_j\|_{L^p}\}_{j\in\Z}\|_{l^r}<\infty\right\}.$$
In this article, we focus on Besov spaces with indices $p=2$ and $r=1$.

Since we will need to restrict our Besov norms to specific regions of low and high frequencies, we introduce the following notations:
\begin{center}
$\displaystyle\|u\|_{\dot B_{2,1}^s}^h\mathrel{\mathop:}=\sum_{j\geq 0}2^{js}\|u_j\|_{L^2}$, $\displaystyle \ \|u\|_{\dot B_{2,1}^s}^l\mathrel{\mathop:}=\sum_{j\leq -1}2^{js}\|u_j\|_{L^2}$.
\end{center}

Several results stemming from this decomposition are included in the appendix; the reader may refer to Chapter 2 of \cite{BCD} for more details on this topic.

\subsection{Main results and organisation of the article}~\\
First, we prove, in critical Besov spaces, a global existence and uniqueness result for the multiphase Euler--Navier--Stokes system \eqref{Euler-Navier-Stokes-multi1}. Since the functional framework considered in this paper preserves the strict positivity of the densities $(\rho_\alpha)$ (see Remark 2.1 in \cite{MonENS} for more details), system \eqref{Euler-Navier-Stokes-multi1} is equivalent to
\begin{eqnarray}\label{Euler-Navier-Stokes-multi2}
\left\{\begin{array}{l}
\partial_t \rho_\alpha+\dive(\rho_\alpha w_\alpha)=0, \\
\partial_t w_\alpha + (w_\alpha\cdot \nabla)w_\alpha+w_\alpha-u=0, \\
\partial_t u+(u\cdot \nabla) u+\nabla P=\Delta u+\int_I \rho_\alpha(w_\alpha-u)d\mu(\alpha), \\
\dive u=0.
\end{array}\right.
\end{eqnarray}
obtained by combining the momentum and continuity equations. Throughout the paper, we shall use both formulations interchangeably.

\begin{theorem}\label{théorème existence et unicité multi}
Let $p\in [1,\infty]$ and $p'$ be the conjugate of $p$ in Hölder's inequality, $I\subset \R$ be any set of $\R$ and $\mu(\alpha)$ be any probability measure on $I$.
    There exists a positive constant $\gamma$ such that for any initial data $Z_{0}\mathrel{\mathop:}=((\rho_{\alpha,0},w_{\alpha,0})_{\alpha\in I},u_{0})$ belonging to $$L_\alpha^p(\dot B_{2,1}^{\frac{d}{2}})\times\left(L_\alpha^{p'}(\dot B_{2,1}^{\frac{d}{2}-1})\cap L_\alpha^\infty(\dot B_{2,1}^{\frac{d}{2}+1})\right)\times \dot B_{2,1}^{\frac{d}{2}-1},$$ and satisfying: \begin{multline}\label{condition initiale multi}
    \|\rho_{\alpha,0}\|_{L_\alpha^p(\dot B_{2,1}^{\frac{d}{2}})}+\|w_{\alpha,0}\|_{L_\alpha^{p'}(\dot B_{2,1}^{\frac{d}{2}+1})\cap L_\alpha^\infty(\dot B_{2,1}^{\frac{d}{2}+1})}  +\|u_0\|_{\dot B_{2,1}^{\frac{d}{2}-1}} \\  +\|w_{\alpha,0}-u_0\|_{L_\alpha^{p'}(\dot B_{2,1}^{\frac{d}{2}-1})}\leq \gamma,\end{multline}
    the system \eqref{Euler-Navier-Stokes-multi2} with initial data $Z_0$ has a unique global-in-time solution $((\rho_\alpha,w_\alpha)_{\alpha \in I},u,P)$ in the set $\widetilde{E}$ defined by \begin{multline}\label{espace fonctionnel E multi} \widetilde{E}\mathrel{\mathop:}= \bigg\{ (\rho_\alpha,w_\alpha,u,P)_{\alpha\in I} \ \bigg| \ \rho_\alpha\in \mathcal{C}_b(\R^+;\dot B_{2,1}^{\frac{d}{2}}), \\ u\in \mathcal{C}_b(\R^+;\dot B_{2,1}^{\frac{d}{2}-1})\cap L^1(\R^+,\dot B_{2,1}^{\frac{d}{2}+1}), \\ w_\alpha\in \mathcal{C}_b(\R^+; \dot B_{2,1}^{\frac{d}{2}-1}\cap \dot B_{2,1}^{\frac{d}{2}+1})\cap L^1(\R^+;\dot B_{2,1}^{\frac{d}{2}+1}), P\in L^1(\R^+,\dot B_{2,1}^{\frac{d}{2}}) \bigg\}.\end{multline}

    In addition, this solution satisfies the following inequality:
   \begin{equation}\label{estimée théorème système 1 multi}\mathcal{Z}(t)\leq C \mathcal{Z}_0 \end{equation} where $$\displaylines{\mathcal{Z}(t)\mathrel{\mathop:}= \|\rho_\alpha\|_{L_\alpha^p L_t^\infty(\dot B_{2,1}^{\frac{d}{2}})}+\|w_\alpha\|_{L_\alpha^{p'}L_t^\infty(\dot B_{2,1}^{\frac{d}{2}-1}\cap \dot B_{2,1}^{\frac{d}{2}+1})\cap L_\alpha^{\infty}L_t^\infty(\dot B_{2,1}^{\frac{d}{2}+1})} \hfill\cr\hfill +\|u\|_{L_t^\infty(\dot B_{2,1}^{\frac{d}{2}-1})} +\|(w_\alpha,u)\|_{L_\alpha^{p'}L_t^1(\dot B_{2,1}^{\frac{d}{2}+1})\cap L_\alpha^{\infty}L_t^1(\dot B_{2,1}^{\frac{d}{2}+1})} \hfill\cr\hfill +\|w_\alpha-u\|_{L_\alpha^{p'}L_t^\infty(\dot B_{2,1}^{\frac{d}{2}-1})\cap L_\alpha^{p'}L_t^1(\dot B_{2,1}^{\frac{d}{2}-1})}+\|P\|_{L_t^1(\dot B_{2,1}^{\frac{d}{2}})}.}$$
\end{theorem}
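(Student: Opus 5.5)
The proof follows the single-phase argument of \cite{MonENS}, now run phase by phase, with the coupling integral $\int_I\rho_\alpha(w_\alpha-u)\,d\mu(\alpha)$ controlled by Hölder's inequality in $\alpha$. This is why $\rho_\alpha$ is measured in $L^p_\alpha$ and $w_\alpha-u$ in $L^{p'}_\alpha$. Existence is obtained by a priori estimates plus a compactness (or Friedrichs-regularisation) scheme; uniqueness by a stability estimate in a space with one derivative less. The a priori estimates come first, for fixed $\alpha$ and a smooth solution on $[0,T]$.

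\textbf{Continuity equation.} Standard transport estimates in $\dot B^{\frac d2}_{2,1}$ give
\[
\|\rho_\alpha\|_{L^\infty_t(\dot B^{\frac d2}_{2,1})}\le \|\rho_{\alpha,0}\|_{\dot B^{\frac d2}_{2,1}}\exp\Big(C\|w_\alpha\|_{L^1_t(\dot B^{\frac d2+1}_{2,1})}\Big).
\]
Once $\|w_\alpha\|_{L^\infty_\alpha L^1_t(\dot B^{\frac d2+1}_{2,1})}$ is known to be small, taking the $L^p_\alpha$ norm yields $\|\rho_\alpha\|_{L^p_\alpha L^\infty_t}\lesssim\|\rho_{\alpha,0}\|_{L^p_\alpha}$.

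\textbf{Velocity equation.} The equation $\partial_t w_\alpha+w_\alpha\cdot\nabla w_\alpha+w_\alpha=u$ is damped transport. Localising with $\dot\Delta_j$ and commutator estimates give
\[
\|w_\alpha\|_{L^\infty_t\cap L^1_t(\dot B^{s}_{2,1})}\lesssim \|w_{\alpha,0}\|_{\dot B^{s}_{2,1}}+\|u\|_{L^1_t(\dot B^{s}_{2,1})}+\|\nabla w_\alpha\|_{L^1_t(L^\infty)}\|w_\alpha\|_{L^\infty_t(\dot B^{s}_{2,1})}
\]
for $s=\frac d2\pm1$. At $s=\frac d2+1$ we take $L^\infty_\alpha$ and $L^{p'}_\alpha$ norms. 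The low-regularity information is best carried by the relative velocity $z_\alpha:=w_\alpha-u$, as in \cite{MonENS}. It satisfies
\[
\partial_t z_\alpha+z_\alpha = -w_\alpha\cdot\nabla w_\alpha+u\cdot\nabla u+\nabla P-\Delta u-\int_I\rho_\beta z_\beta\,d\mu(\beta),
\]
which is a damped equation in $\dot B^{\frac d2-1}_{2,1}$. This gives $L^{p'}_\alpha(L^\infty_t\cap L^1_t)$ control of $z_\alpha$, with right-hand side bounded by
\[
\|u\|_{L^1_t(\dot B^{\frac d2+1}_{2,1})}+\|\rho\|_{L^p_\alpha L^\infty_t}\|z\|_{L^{p'}_\alpha L^1_t}+\text{quadratic terms}.
\]
The $\dot B^{\frac d2-1}$ part of $w_\alpha$ then follows from $w_\alpha=z_\alpha+u$.

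\textbf{Navier--Stokes.} Maximal regularity in $\dot B^{\frac d2-1}_{2,1}$ controls $u$ in $L^\infty_t(\dot B^{\frac d2-1}_{2,1})\cap L^1_t(\dot B^{\frac d2+1}_{2,1})$ by the data plus
\[
\Big\|\int_I\rho_\alpha z_\alpha\,d\mu\Big\|_{L^1_t(\dot B^{\frac d2-1}_{2,1})}\le\int_I\|\rho_\alpha\|_{L^\infty_t(\dot B^{\frac d2}_{2,1})}\|z_\alpha\|_{L^1_t(\dot B^{\frac d2-1}_{2,1})}d\mu\le\|\rho\|_{L^p_\alpha L^\infty_t}\|z\|_{L^{p'}_\alpha L^1_t}.
\]
Here we used the product law $\dot B^{\frac d2}\times\dot B^{\frac d2-1}\to\dot B^{\frac d2-1}$, valid for $d\ge2$. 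The pressure bound in $L^1_t(\dot B^{\frac d2}_{2,1})$ comes from $\nabla P=\nabla(-\Delta)^{-1}\dive(\text{source terms})$.

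\textbf{Main obstacle: closing the loop.} The difficulty is that $u$ feeds every $w_\alpha$ while all phases feed $u$ at once, and the $\dot B^{\frac d2-1}$ damping of $z_\alpha$ must absorb the drag. In \cite{MonENS} this was done by estimating the coupled linear part together, producing a dissipative $L^1_t$ bound on $z$. Here the same works because the coupling term is only linear in $z$ with a small coefficient $\|\rho\|_{L^p_\alpha L^\infty_t}\lesssim\gamma$, so it can be absorbed. Summing everything into $\mathcal Z(t)$ gives
\[
\mathcal Z(t)\le C\mathcal Z_0+C\mathcal Z(t)^2
\]
(possibly with an exponential factor), and a bootstrap under \eqref{condition initiale multi} yields $\mathcal Z(t)\le 2C\mathcal Z_0$. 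Measurability in $\alpha$ must be checked, but it is inherited from the construction.

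\textbf{Existence and uniqueness.} Existence follows by solving Friedrichs-truncated systems, where the uniform bounds above hold, then passing to the limit via Aubin--Lions on compact sets, phase by phase; the $\alpha$-integral is handled by dominated convergence. Uniqueness comes from estimating the difference of two solutions: $\delta\rho_\alpha$ in $L^p_\alpha(\dot B^{\frac d2-1}_{2,1})$, $\delta w_\alpha$ in $\dot B^{\frac d2}_{2,1}$, and $\delta u$ in $\dot B^{\frac d2-2}_{2,1}$ (or in an energy-type norm if $d=2$, to avoid the endpoint). The same Hölder-in-$\alpha$ argument on the coupling then gives a Gronwall inequality on a short interval, which is iterated.
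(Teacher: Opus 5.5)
Your a priori estimates and existence scheme are the paper's: the transport estimate for $\rho_\alpha$ closed by smallness of $\|w_\alpha\|_{L^\infty_\alpha L^1_t(\dot B^{\frac d2+1}_{2,1})}$, damped transport for $w_\alpha$ at $\dot B^{\frac d2+1}_{2,1}$, the damped mode $w_\alpha-u$ in $\dot B^{\frac d2-1}_{2,1}$, H\"older in $\alpha$ on the drag, a bootstrap, and Friedrichs. The genuine gap is in your uniqueness scheme. You measure $\delta\rho_\alpha$ in $\dot B^{\frac d2-1}_{2,1}$, $\delta w_\alpha$ in $\dot B^{\frac d2}_{2,1}$ and $\delta u$ in $\dot B^{\frac d2-2}_{2,1}$. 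But the $\delta u$ equation contains $\int_I\rho_{\alpha,1}(\delta w_\alpha-\delta u)\,d\mu$, and the piece $\rho_{\alpha,1}\delta w_\alpha$ cannot be bounded in $L^1_T(\dot B^{\frac d2-2}_{2,1})$ when both factors are only known in $\dot B^{\frac d2}_{2,1}$. No inequality $\|fg\|_{\dot B^{\frac d2-2}_{2,1}}\lesssim\|f\|_{\dot B^{\frac d2}_{2,1}}\|g\|_{\dot B^{\frac d2}_{2,1}}$ can hold: for $f=g=\phi(\lambda\,\cdot)$ the right-hand side does not depend on $\lambda$, while the left-hand side grows like $\lambda^{-2}$ as $\lambda\to0$. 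Shrinking the time interval does not help, because this is a low-frequency defect. It is exactly the defect that forced you to introduce $z_\alpha$ in the a priori estimates, and your difference scheme has no counterpart of it, so the Gronwall loop does not close.

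The paper keeps $\delta u$ in $L^\infty_t(\dot B^{\frac d2-1}_{2,1})\cap L^1_t(\dot B^{\frac d2+1}_{2,1})$ and adds the damped mode $\delta w_\alpha-\delta u$ in $L^{p'}_\alpha(L^\infty_t\cap L^1_t)(\dot B^{\frac d2-1}_{2,1})$ (Lemma~\ref{stabilité multi}). The coupling is then estimated through $\dot B^{\frac d2}_{2,1}\times\dot B^{\frac d2-1}_{2,1}\to\dot B^{\frac d2-1}_{2,1}$ with the small factor $\|\rho_{\alpha,1}\|_{L^p_\alpha L^\infty_T(\dot B^{\frac d2}_{2,1})}$, uniformly for $d\ge2$. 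The price is twofold: $\delta\rho_\alpha(w_{\alpha,2}-u_2)$ needs $w_{\alpha,2}-u_2$ in $L^{p'}_\alpha L^1_T(\dot B^{\frac d2}_{2,1})$, obtained by a low/high frequency split with factors $T$; and the non-small $u_2$ is handled by writing $u_i=e^{t\Delta}u_0+\widetilde u_i$ with $\widetilde u_i$ small on short times. Your lower norm for $\delta u$ could plausibly avoid both devices when $d\ge3$, via $\|u_i\|_{L^2_T(\dot B^{\frac d2}_{2,1})}\to0$. That only works after you add low-frequency control of $\delta w_\alpha$, e.g.\ an estimate in $L^\infty_T(\dot B^{\frac d2-2}_{2,1})$ from its damped transport equation, with source bounded by $T\|\delta u\|_{L^\infty_T(\dot B^{\frac d2-2}_{2,1})}$. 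For $d=2$ your ``energy-type norm'' is left unspecified.

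There are two smaller points. First, in the closing step, $-\Delta u$ in the $z_\alpha$ equation and the source $u$ in the $w_\alpha$ equation enter linearly with $O(1)$ coefficients, so summing the estimates as they stand does not close. You need the triangular structure (the Navier--Stokes estimate has no such term in $w_\alpha$ or $z_\alpha$) together with suitable weights, as in the paper's $\mathcal L$ and $\mathcal H$. Second, Aubin--Lions applied phase by phase cannot produce a single subsequence for uncountably many $\alpha$. Extract only for $u^n$ and identify each phase limit by uniqueness of the phase equations driven by the limit $u$, or show the approximate solutions are Cauchy in the stability norms.
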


\begin{remark}
We prove a better result concerning uniqueness: we have uniqueness in the space $\widetilde{E}$ without any smallness condition on the initial data.

\end{remark}

We recall an equivalent result obtained in \cite{MonENS}:
\begin{thm}\label{théorème existence et unicité}
    There exists a non-negative constant $\gamma>0$ such that for all initial data $\widetilde{Z_0}\mathrel{\mathop:}=(\rho_0,w_0,u_0)\in \dot B_{2,1}^{\frac{d}{2}}\times \left(\dot B_{2,1}^{\frac{d}{2}-1}\cap \dot B_{2,1}^{\frac{d}{2}+1}\right)\times \dot B_{2,1}^{\frac{d}{2}-1}$ satisfying : \begin{equation}\label{condition initiale}\widetilde{\mathcal{Z}_0}\mathrel{\mathop:}=\|\rho_0\|_{\dot B_{2,1}^{\frac{d}{2}}}+\|w_0\|_{\dot B_{2,1}^{\frac{d}{2}+1}}+\|u_0\|_{\dot B_{2,1}^{\frac{d}{2}-1}}+\|w_0-u_0\|_{\dot B_{2,1}^{\frac{d}{2}-1}}\leq \gamma,\end{equation}
    the system \eqref{Euler-Navier-Stokes1} with the initial data $\widetilde{Z_0}$ admits a unique global-in-time solution $(\rho,w,u,P)$ in the set
    \begin{multline}\label{espace fonctionnel E} E\mathrel{\mathop:}= \bigg\{ (\rho,w,u,P) \ \bigg| \ \rho\in \mathcal{C}_b(\R^+;\dot B_{2,1}^{\frac{d}{2}}), \ u\in \mathcal{C}_b(\R^+;\dot B_{2,1}^{\frac{d}{2}-1})\cap L^1(\R^+,\dot B_{2,1}^{\frac{d}{2}+1}), \\ w\in \mathcal{C}_b(\R^+; \dot B_{2,1}^{\frac{d}{2}-1}\cap \dot B_{2,1}^{\frac{d}{2}+1})\cap L^1(\R^+;\dot B_{2,1}^{\frac{d}{2}+1}), \nabla P\in L^1(\R^+,\dot B_{2,1}^{\frac{d}{2}-1}) \bigg\}.\end{multline}
Moreover, we have the following inequality for any $t\in\R^+$ : \begin{equation}\label{estimée théorème système 1}\mathcal{Z}(t)\leq C \mathcal{Z}_0 \end{equation} where $$\displaylines{\mathcal{Z}(t)\mathrel{\mathop:}= \|\rho\|_{L_t^\infty(\dot B_{2,1}^{\frac{d}{2}})}+\|w\|_{L_t^\infty(\dot B_{2,1}^{\frac{d}{2}-1}\cap \dot B_{2,1}^{\frac{d}{2}+1})}+\|u\|_{L_t^\infty(\dot B_{2,1}^{\frac{d}{2}-1})}+\|(w,u)\|_{L_t^1(\dot B_{2,1}^{\frac{d}{2}+1})} \hfill\cr\hfill +\|w-u\|_{L_t^\infty(\dot B_{2,1}^{\frac{d}{2}-1})\cap L_t^1(\dot B_{2,1}^{\frac{d}{2}-1})}+\|\nabla P\|_{L_t^1(\dot B_{2,1}^{\frac{d}{2}-1})}.}$$
\end{thm}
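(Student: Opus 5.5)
The plan is to obtain this statement as the one-phase specialisation of Theorem~\ref{théorème existence et unicité multi}. I take $I=\{\alpha_0\}$ reduced to a point and $\mu=\delta_{\alpha_0}$, with any fixed $p\in[1,\infty]$ (for instance $p=1$, $p'=\infty$). For every Banach space $X$ the norms $\|\cdot\|_{L_\alpha^q(X)}$ and $\|\cdot\|_{L_\alpha^qL_t^r(X)}$ then coincide with $\|\cdot\|_X$ and $\|\cdot\|_{L_t^r(X)}$, and $\int_I\rho_\alpha(w_\alpha-u)\,d\mu(\alpha)=\rho(w-u)$. Consequently \eqref{Euler-Navier-Stokes-multi2} is exactly the velocity formulation of \eqref{Euler-Navier-Stokes1}, and \eqref{condition initiale multi} reduces to $\widetilde{\mathcal Z_0}\le\gamma$, because $\|w_0\|_{\dot B_{2,1}^{d/2+1}}$ appears there with both the $L^{p'}_\alpha$ and $L^\infty_\alpha$ weights. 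This only changes $\gamma$ by a factor $2$.

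Theorem~\ref{théorème existence et unicité multi} then provides a global solution in $\widetilde E$ satisfying \eqref{estimée théorème système 1 multi}, which is \eqref{estimée théorème système 1} term by term. Since $\|\nabla P\|_{\dot B_{2,1}^{d/2-1}}\simeq\|P\|_{\dot B_{2,1}^{d/2}}$, the solution lies in $E$. For uniqueness in $E$ I note that $\nabla P$ is recovered from the other unknowns through the Leray projector, as $\nabla P=(\mathrm{Id}-\mathbb P)\big(\rho(w-u)-\dive(u\otimes u)\big)$. This determines $P$ in $\mathcal S_h'$, so $E$ and $\widetilde E$ describe the same solutions, and the remark following Theorem~\ref{théorème existence et unicité multi} (uniqueness in $\widetilde E$ without smallness) gives the claim.

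If the proof of Theorem~\ref{théorème existence et unicité multi} itself relies on this statement, the reduction above would be circular, and I would instead follow the direct route of \cite{MonENS}. In that route the key unknown is the relative velocity $z:=w-u$, which satisfies a damped equation
\[
\partial_t z+z=-(w\cdot\nabla)w+\mathbb P\big((u\cdot\nabla)u-\rho z\big)-\Delta u+\ldots
\]
The steps would be the following.
\begin{itemize}
\item Estimate $u$ in $L_t^\infty(\dot B_{2,1}^{d/2-1})\cap L_t^1(\dot B_{2,1}^{d/2+1})$ by maximal regularity for the Stokes system, with source $\rho z\in L^1_t(\dot B_{2,1}^{d/2-1})$ controlled by product laws.
\item Estimate $w$ in $\dot B_{2,1}^{d/2+1}$ through the damped transport equation $\partial_tw+w\cdot\nabla w+w=u$, gaining $L^1_t(\dot B_{2,1}^{d/2+1})$ from the damping and using commutator estimates that cost $\|\nabla w\|_{L^\infty}$.
\item Estimate $z$ in $L^\infty_t\cap L^1_t(\dot B_{2,1}^{d/2-1})$ from its damped equation; the $\Delta u$ term is controlled by the parabolic $L^1_t(\dot B_{2,1}^{d/2+1})$ bound.
\item Bound $\rho$ in $\dot B_{2,1}^{d/2}$ by transport estimates, with a factor $\exp\big(C\|\nabla w\|_{L^1_t(\dot B_{2,1}^{d/2})}\big)$ that stays bounded.
\end{itemize}
Summing these gives $\mathcal Z(t)\le C\widetilde{\mathcal Z_0}+C\mathcal Z(t)^2$, and a bootstrap closes it under smallness. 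Existence follows by Friedrichs approximation and compactness. Uniqueness follows from a difference estimate one derivative lower, namely $\rho$ in $\dot B_{2,1}^{d/2-1}$ and $w$ in $\dot B_{2,1}^{d/2}$, to absorb the loss in transport.

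The main obstacle in the direct route is the coupling loop. The estimate on $w$ in $L^1_t(\dot B_{2,1}^{d/2+1})$ requires $u\in L^1_t(\dot B_{2,1}^{d/2+1})$, while the estimate on $u$ requires the drag $\rho(w-u)$ only at regularity $\frac d2-1$ but integrable in time. This is exactly why $z=w-u$ and its $L^1_t$ damping must be introduced, and why the smallness condition contains $\|w_0-u_0\|_{\dot B_{2,1}^{d/2-1}}$. In the reduction route, by contrast, the only delicate point is the pressure bookkeeping between $E$ and $\widetilde E$.
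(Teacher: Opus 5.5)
Your reduction is correct, but it is not the paper's route. The paper gives no proof of this statement and imports it from \cite{MonENS}, where it is proved directly, essentially along the lines of your fallback. That argument runs as follows: estimates on $\rho$, $u$, $w$ and the damped mode $w-u$ are summed with weights such as $\frac{1}{4c_B}$ and $\frac{1}{2c_B^2}$, then closed by a bootstrap; existence comes from Friedrichs' scheme, and uniqueness from a stability estimate one derivative lower combined with the splitting $u=e^{t\Delta}u_0+\widetilde u$. The weights matter: they make the linear terms $\|u\|_{L^1_t(\dot B_{2,1}^{\frac d2+1})}$ coming from the $w$ and $w-u$ equations get absorbed by the parabolic gain, and your plain ``summing'' skips this. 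Specializing the multiphase theorem to $\mu=\delta_{\alpha_0}$ is not circular here. Sections 3 and 4 only reuse the per-phase Lemmas 3.1--3.4 and 3.7 of \cite{MonENS} and its Friedrichs scheme, never the single-phase theorem itself. Moreover, viewing a one-phase solution as an $\alpha$-independent multiphase one is exactly what the paper does in Section 5. Your bookkeeping is sound. The $L^q_\alpha$ norms collapse, and $\|\nabla P\|_{\dot B_{2,1}^{\frac d2-1}}\simeq\|P\|_{\dot B_{2,1}^{\frac d2}}$ for the pressure normalized in $\mathcal S_h'$. Since the data are small, the constructed solution is small, which is all the uniqueness argument of Section 4 uses; you do not even need the remark on uniqueness without smallness. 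The smallness threshold also transfers with only a constant loss. If the $L^{p'}_\alpha$ part of the $w_{\alpha,0}$-norm in the multiphase smallness condition is meant at regularity $\frac d2-1$, as in the data space, use $\|w_0\|_{\dot B_{2,1}^{\frac d2-1}}\le\|u_0\|_{\dot B_{2,1}^{\frac d2-1}}+\|w_0-u_0\|_{\dot B_{2,1}^{\frac d2-1}}$. The reduction buys brevity and makes explicit that the one-phase theory is the Dirac case of the multiphase one. It is not an independent proof, though, because the multiphase argument is an $\alpha$-integrated copy of the single-phase computations. Like the paper, it also relies on identifying the conservative system with its velocity form, which uniqueness requires since $w$ is undetermined where $\rho$ vanishes. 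The direct route of \cite{MonENS} is the self-contained one.
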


We now state the main result of this paper. It provides a quantitative stability estimate between monokinetic solutions of the Vlasov--Navier--Stokes system and solutions of the pressureless Euler--Navier--Stokes system. In particular, it yields a rigorous connection between the two models and shows that an initially monokinetic distribution remains close to the corresponding Euler--Navier--Stokes solution for all times.

\begin{theorem}\label{lien VNS et ENS}
Let $Z_0=((\rho_{\alpha,0},w_{\alpha,0})_{\alpha\in I},u_0)$ satisfy the assumptions of Theorem \ref{théorème existence et unicité multi} and let $\widetilde Z_0=(\rho_0,w_0,\widetilde u_0)$ satisfy the assumptions of Theorem \ref{théorème existence et unicité}. Denote by \[ ((\rho_\alpha,w_\alpha)_{\alpha\in I},u,P) \] and \[ (\rho,w,\widetilde u,\widetilde P) \] the corresponding global solutions. There exists a positive constant $C$ such that if \begin{equation}\label{petitesse différence condition initiale}D_0\leq \varepsilon,\end{equation} where \[ \begin{aligned} D_0:=& \|\rho_{\alpha,0}-\rho_0\|_{L_\alpha^p(\dot B_{2,1}^{\frac d2-1})} +\|w_{\alpha,0}-w_0\|_{L_\alpha^{p'}(\dot B_{2,1}^{\frac d2})} +\|u_0-\widetilde u_0\|_{\dot B_{2,1}^{\frac d2-1}} \\ &+\|(w_{\alpha,0}-u_0)-(w_0-\widetilde u_0)\|_{L_\alpha^{p'}(\dot B_{2,1}^{\frac d2-1})}, \end{aligned} \] then \begin{equation}\label{petitesse différence des solutions}D(t)\leq C\varepsilon, \qquad \forall t\geq0, \end{equation} where \[ \begin{aligned} D(t):=&\ \|\rho_\alpha-\rho\|_{L_\alpha^pL_t^\infty(\dot B_{2,1}^{\frac d2-1})} +\|u-\widetilde u\|_{L_t^\infty(\dot B_{2,1}^{\frac d2-1}) \cap L_t^1(\dot B_{2,1}^{\frac d2+1})} \\ &+\|w_\alpha-w\|_{L_\alpha^{p'} L_t^\infty(\dot B_{2,1}^{\frac d2}) \cap L_\alpha^{p'}L_t^1(\dot B_{2,1}^{\frac d2})}. \end{aligned} \] Moreover, \[ f(t,x,v):=\int_I \rho_\alpha(t,x)\otimes \delta_{v=w_\alpha(t,x)}\,d\mu(\alpha) \] is a distributional solution to \eqref{VNS}. If, in addition, $\rho_0\in L^1(\R^d)$, then \begin{equation}\label{convergence}\widetilde W_1\!\left( \int_I \rho_\alpha \otimes\delta_{v=w_\alpha}\,d\mu, \,\rho\otimes\delta_{v=w} \right)(t) \leq C\varepsilon, \qquad \forall t\geq0.\end{equation}
   
\end{theorem}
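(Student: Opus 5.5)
We set $\delta\rho_\alpha:=\rho_\alpha-\rho$, $\delta w_\alpha:=w_\alpha-w$, $\delta u:=u-\widetilde u$ and $\delta P:=P-\widetilde P$. Subtracting \eqref{Euler-Navier-Stokes1} (in velocity form) from \eqref{Euler-Navier-Stokes-multi2} gives a linear system for the differences. Its coefficients involve the two solutions, and these are already globally controlled by Theorems \ref{théorème existence et unicité multi} and \ref{théorème existence et unicité}:
\begin{align*}
&\partial_t\delta\rho_\alpha+\dive(\delta\rho_\alpha\, w_\alpha)=-\dive(\rho\,\delta w_\alpha),\\
&\partial_t\delta w_\alpha+(w_\alpha\cdot\nabla)\delta w_\alpha+\delta w_\alpha-\delta u=-(\delta w_\alpha\cdot\nabla)w,\\
&\partial_t\delta u+(u\cdot\nabla)\delta u-\Delta\delta u+\nabla\delta P=-(\delta u\cdot\nabla)\widetilde u+\int_I\big(\delta\rho_\alpha(w_\alpha-u)+\rho\,(\delta w_\alpha-\delta u)\big)d\mu(\alpha).
\end{align*}
Each difference is estimated one derivative below the regularity of the existence theory, as is usual for stability of hyperbolic transport. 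So $\delta\rho_\alpha$ is taken in $\dot B^{\frac d2-1}_{2,1}$ and $\delta w_\alpha$ in $\dot B^{\frac d2}_{2,1}$. The exception is $\delta u$, which keeps $\dot B^{\frac d2-1}_{2,1}\cap L^1\dot B^{\frac d2+1}_{2,1}$, since the parabolic smoothing makes this possible.

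\emph{Step 1: estimates for fixed $\alpha$.} I localize with $\dot\Delta_j$, take $L^2$ energy estimates and use the commutator estimates from the appendix. For the transport equation on $\delta\rho_\alpha$ I expect
\[
\|\delta\rho_\alpha\|_{L^\infty_t\dot B^{\frac d2-1}}\lesssim e^{C\|\nabla w_\alpha\|_{L^1_t\dot B^{\frac d2}}}\Big(\|\delta\rho_{\alpha,0}\|_{\dot B^{\frac d2-1}}+\|\rho\|_{L^\infty_t\dot B^{\frac d2}}\|\delta w_\alpha\|_{L^1_t\dot B^{\frac d2}}\Big).
\]
The product $\rho\,\delta w_\alpha$ lies in $\dot B^{\frac d2}$, which is admissible for $d\ge2$. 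For $\delta w_\alpha$, the damping term $\delta w_\alpha$ gives the $L^1_t\dot B^{\frac d2}$ control, and $\delta u$ is a source. The difficulty is that $\delta u$ appears at regularity $\frac d2$, where it is only bounded in $L^2_t$ by interpolation. I therefore propose to work, as in \cite{MonENS}, with the combined unknown $\delta w_\alpha-\delta u$ in $\dot B^{\frac d2-1}$. This mirrors the quantity $\|w-u\|$ in $\mathcal Z$ and exploits the common damping $-(w_\alpha-u)$. The $\dot B^{\frac d2}$ bound on $\delta w_\alpha$ then follows from the low/high frequency splitting of \cite{MonENS}. In high frequencies I use the damped transport equation directly, with $\|\delta u\|^h_{L^1\dot B^{\frac d2}}\le\|\delta u\|_{L^1\dot B^{\frac d2+1}}$. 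In low frequencies I use the combined quantity. The Stokes part is handled by maximal regularity for the heat equation after applying the Leray projector, with $\delta P$ eliminated.

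\emph{Step 2: integration in $\alpha$ and closing.} The exponential factors are uniformly bounded because $\|w_\alpha\|_{L^\infty_\alpha L^1_t\dot B^{\frac d2+1}}\lesssim\gamma$. I take the $L^p_\alpha$ norm of the $\delta\rho_\alpha$ estimate and the $L^{p'}_\alpha$ norm of the $\delta w_\alpha$ estimate. The coupling term $\int_I\delta\rho_\alpha(w_\alpha-u)\,d\mu$ is controlled by Hölder in $\alpha$:
\[
\|\delta\rho_\alpha\|_{L^p_\alpha L^\infty_t\dot B^{\frac d2-1}}\,\|w_\alpha-u\|_{L^{p'}_\alpha L^1_t\dot B^{\frac d2}}.
\]
The last factor is bounded by interpolation between the $\dot B^{\frac d2-1}$ and $\dot B^{\frac d2+1}$ controls in $\mathcal Z$, which is where the pairing of $p$ with $p'$ is essential. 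All nonlinear contributions are then bounded by $C\gamma\,D(t)$, which yields $D(t)\le C D_0+C\gamma D(t)$. Choosing $\gamma$ small gives \eqref{petitesse différence des solutions}.

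\emph{Step 3: kinetic statements.} To show that $f$ solves \eqref{VNS}, I test against $\phi(t,x,v)$. Then $\langle f,\phi\rangle=\int_I\int\rho_\alpha\,\phi(t,x,w_\alpha)\,dx\,d\mu$, and the chain rule together with the continuity and velocity equations of \eqref{Euler-Navier-Stokes-multi2} gives the Vlasov equation. The moments are $\rho_f=\int_I\rho_\alpha\,d\mu$ and $j_f=\int_I\rho_\alpha w_\alpha\,d\mu$, so the fluid equation matches. For \eqref{convergence}, I take $\widetilde W_1$ to be the dual bounded-Lipschitz type distance and test against a $1$-Lipschitz $\phi$:
\[
\Big|\int_I\!\!\int\big(\rho_\alpha\phi(x,w_\alpha)-\rho\,\phi(x,w)\big)\Big|\le\int_I\!\!\int|\delta\rho_\alpha|\,|\phi|+\int_I\!\!\int\rho\,|\delta w_\alpha|.
\]
The second term is controlled using $\rho_0\in L^1$, which is propagated by the continuity equation, together with $\dot B^{\frac d2}_{2,1}\hookrightarrow L^\infty$. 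The first term is controlled using the negative-regularity norm of $\delta\rho_\alpha$ tested against $\phi$.

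\textbf{Main obstacle.} I expect the main difficulty to be the regularity loss in Step 1, namely closing the $\dot B^{\frac d2}$ estimate of $\delta w_\alpha$ against a $\delta u$ that is controlled only in $L^1\dot B^{\frac d2+1}\cap L^\infty\dot B^{\frac d2-1}$, together with the low-frequency term $\rho\,\delta w_\alpha$ in the $\delta\rho_\alpha$ equation.
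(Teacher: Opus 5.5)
Your route is the paper's. You view $(\rho,w,\widetilde u)$ as an $\alpha$-independent multiphase solution and run the difference estimates of Lemma~\ref{stabilit\'e multi}: one derivative of loss, the damped mode $\delta w_\alpha-\delta u$ in $\dot B^{\frac d2-1}_{2,1}$, low/high frequency splitting, and H\"older in $\alpha$. You then split $\widetilde W_1$ with the triangle inequality. The gap is the closing claim of Step~2, that every nonlinear term is $\le C\gamma D(t)$. It fails exactly at the obstacle you name and leave unresolved.

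\textbf{Low frequencies in time.} The source $\dive(\rho\,\delta w_\alpha)$ in the $\delta\rho_\alpha$ equation requires $\int_0^t\|\delta w_\alpha\|_{\dot B^{\frac d2}_{2,1}}\,d\tau$ at \emph{all} frequencies, and so does $D(t)$ itself. The damped-mode argument only puts low frequencies of $\delta w_\alpha$ in $L^\infty_t$: the left side of \eqref{unicit\'e estim\'ee u multi} contains only $\|\delta w_\alpha\|^h_{L^{p'}_\alpha L^1_t(\dot B^{\frac d2}_{2,1})}$. Writing $\delta w_\alpha=(\delta w_\alpha-\delta u)+\delta u$ leaves $\|\delta u\|^l_{L^1_t(\dot B^{\frac d2}_{2,1})}$. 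The space $L^\infty_t(\dot B^{\frac d2-1}_{2,1})\cap L^1_t(\dot B^{\frac d2+1}_{2,1})$ does not control this for large times; for the heat flow, $\int_0^\infty\|e^{t\Delta}f\|^l_{\dot B^{\frac d2}_{2,1}}\,dt\simeq\|f\|^l_{\dot B^{\frac d2-2}_{2,1}}$. On $[0,T]$ you may use $\|\cdot\|_{L^1_T}\le T\|\cdot\|_{L^\infty_T}$, which is the only mechanism available in the uniqueness argument, but not for all $t\ge0$.

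\textbf{Exponents in $\alpha$.} Taking the $L^p_\alpha$ norm of the $\delta\rho_\alpha$ estimate produces $\|\rho\|_{L^\infty_t(\dot B^{\frac d2}_{2,1})}\|\delta w_\alpha\|_{L^p_\alpha L^1_t(\dot B^{\frac d2}_{2,1})}$; the paper's \eqref{unicit\'e estim\'ee rho multi} even has $L^\infty_\alpha$ there. This is dominated by the $L^{p'}_\alpha$ norm in $D$ only when $p\le 2$.

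The paper's ``combining the estimates, we deduce easily'' has the same hole, and a leading-order computation suggests it is not removable. Take a small bump $\rho_0$ and compare the stationary solution $(\rho_0,0,0)$ with the solution issued from $w_0=u_0=\varepsilon U_0$. Here $U_0$ is divergence free with $|\widehat U_0(\xi)|$ of size $|\xi|^{-a}$ near $0$, and $d-2<a<d-1$. Then $U_0\in\dot B^{\frac d2-1}_{2,1}\cap\dot B^{\frac d2+1}_{2,1}$ and $D_0\simeq\varepsilon$. Yet $w\approx u\approx\varepsilon e^{t\Delta}U_0$ at leading order, and $\int_0^t e^{\tau\Delta}U_0\,d\tau$ grows like $t^{(a+2-d)/2}$. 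The bump is therefore carried an unbounded distance, so neither $\|\delta\rho(t)\|_{\dot B^{\frac d2-1}_{2,1}}$ nor $\widetilde W_1$ stays $O(\varepsilon)$, and $\|\delta w\|_{L^1_t(\dot B^{\frac d2}_{2,1})}=\infty$.

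Similarly, concentrating $w_{\alpha,0}-w_0$ on a set of $\alpha$ of measure $m\to0$ gives $\|\delta\rho_\alpha\|_{L^p_\alpha}/D_0\sim m^{\frac1p-\frac1{p'}}$ already at times of order one. This is unbounded for $p>2$.

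What your estimates actually deliver is one of two things:
\begin{itemize}
\item a bound on $[0,T]$ with a $T$-dependent constant;
\item a global bound under an extra low-frequency hypothesis that places $\delta u$ in $L^1_t(\dot B^{\frac d2}_{2,1})$, for example $\dot B^{\frac d2-2}_{2,1}$ data, together with $p\le2$.
\end{itemize}
Step~3 then works as in the paper, provided you use the $L^{d'}_xL^\infty_v$ part of the test class through $\dot B^{\frac d2-1}_{2,1}\hookrightarrow L^d$. A bounded-Lipschitz class would not control the $\delta\rho_\alpha$ term.
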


The quantity $\widetilde W_1$ denotes the Wasserstein-type distance defined by \[ \widetilde W_1(\lambda,\nu) := \sup\Bigg\{ \int_{\R^d\times\R^d}\phi\,d\lambda - \int_{\R^d\times\R^d}\phi\,d\nu \Bigg\}, \] where the supremum is taken over all functions \[ \phi\in C^{0,1}(\R^d\times\R^d)\cap L_x^{d'}L_v^\infty \] such that \[ \|\nabla_{x,v}\phi\|_{L^\infty} + \|\phi\|_{L_x^{d'}L_v^\infty} \leq 1. \]
The previous theorem immediately yields the following convergence results.

\begin{coro}[Convergence towards monokinetic solutions]
Let
\[
\big((\rho_{\alpha,0}^{(n)},w_{\alpha,0}^{(n)})_{\alpha\in I},
u_0^{(n)}\big)_{n\in\N}
\]
be a sequence of initial data satisfying the assumptions of
Theorem~\ref{théorème existence et unicité multi}. Assume that there
exist initial data
\[
(\rho_0,w_0,\widetilde u_0)
\]
satisfying the assumptions of
Theorem~\ref{théorème existence et unicité} such that
\[
\begin{aligned}
&\|\rho_{\alpha,0}^{(n)}-\rho_{0}\|_{L_\alpha^p(\dot B_{2,1}^{\frac d2-1})}
+\|w_{\alpha,0}^{(n)}-w_{0}\|_{L_\alpha^{p'}(\dot B_{2,1}^{\frac d2})}
+\|u_0^{(n)}-\widetilde u_0\|_{\dot B_{2,1}^{\frac d2-1}}
\\
&\qquad
+\|(w_{\alpha,0}^{(n)}-u_0^{(n)})
-(w_0-\widetilde u_0)\|_{L_\alpha^{p'}(\dot B_{2,1}^{\frac d2-1})}
\longrightarrow 0,
\end{aligned}
\]
as $n\to\infty$.

Then the corresponding solutions
\[
((\rho_\alpha^{(n)},w_\alpha^{(n)})_{\alpha\in I},
u^{(n)},P^{(n)})
\]
converge towards the unique solution
\[
(\rho,w,\widetilde u)
\]
of \eqref{Euler-Navier-Stokes1} in the sense that
\[
\begin{aligned}
&\|\rho_\alpha^{(n)}-\rho\|_{L_\alpha^pL_t^\infty(\dot B_{2,1}^{\frac d2-1})}
+\|u^{(n)}-\widetilde u\|_{L_t^\infty(\dot B_{2,1}^{\frac d2-1})
\cap L_t^1(\dot B_{2,1}^{\frac d2+1})}
\\
&\qquad
+\|w_\alpha^{(n)}-w\|_{L_\alpha^{p'}
L_t^\infty(\dot B_{2,1}^{\frac d2})
\cap
L_\alpha^{p'}L_t^1(\dot B_{2,1}^{\frac d2})}
\longrightarrow 0.
\end{aligned}
\]
\end{coro}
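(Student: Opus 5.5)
The corollary is a direct consequence of Theorem~\ref{lien VNS et ENS}, applied for each fixed $n$ with the limit data $(\rho_0,w_0,\widetilde u_0)$ held fixed. Denote by
\[
\begin{aligned}
D_0^{(n)}:=&\ \|\rho_{\alpha,0}^{(n)}-\rho_{0}\|_{L_\alpha^p(\dot B_{2,1}^{\frac d2-1})}
+\|w_{\alpha,0}^{(n)}-w_{0}\|_{L_\alpha^{p'}(\dot B_{2,1}^{\frac d2})}
+\|u_0^{(n)}-\widetilde u_0\|_{\dot B_{2,1}^{\frac d2-1}}\\
&+\|(w_{\alpha,0}^{(n)}-u_0^{(n)})-(w_0-\widetilde u_0)\|_{L_\alpha^{p'}(\dot B_{2,1}^{\frac d2-1})}
\end{aligned}
\]
the initial discrepancy, which tends to $0$ by assumption, and by $D^{(n)}(t)$ the corresponding quantity $D(t)$ from Theorem~\ref{lien VNS et ENS} built on $((\rho_\alpha^{(n)},w_\alpha^{(n)}),u^{(n)})$ and $(\rho,w,\widetilde u)$.

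I read Theorem~\ref{lien VNS et ENS} as holding for every $\varepsilon>0$, possibly below some threshold $\varepsilon_0$ coming from the smallness regime, with a constant $C$ independent of $\varepsilon$ and of the data. This is the form in which the theorem is stated, since $C$ is fixed before the condition $D_0\le\varepsilon$ is imposed. Given $\eta\in(0,\varepsilon_0]$, I pick $N$ such that $D_0^{(n)}\le\eta$ for all $n\ge N$. For each such $n$, both the multiphase data and the one-phase data satisfy the hypotheses of Theorems~\ref{théorème existence et unicité multi} and \ref{théorème existence et unicité} by assumption, so Theorem~\ref{lien VNS et ENS} applies with $\varepsilon=\eta$ and gives $D^{(n)}(t)\le C\eta$ for all $t\ge0$.

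The norms in $D^{(n)}(t)$ are of the form $L^\infty_t$ and $L^1_t$ on $[0,t]$, so they are nondecreasing in $t$. Letting $t\to\infty$ by monotone convergence therefore gives the same bound for the global-in-time norms over $\R^+$. Since $\eta$ is arbitrary, $\sup_{t}D^{(n)}(t)\to0$, and this is exactly the stated convergence.

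The only point needing care is to check that $C$ in Theorem~\ref{lien VNS et ENS} depends neither on $n$ nor on $\varepsilon$. It depends only on $d$, $p$ and the smallness constant $\gamma$, because the solutions are all controlled uniformly by \eqref{estimée théorème système 1 multi} and \eqref{estimée théorème système 1}. If the theorem were only available for a single fixed $\varepsilon$, I would instead use its proof, which is a linear Gronwall-type stability estimate $D(t)\lesssim D_0$, to get $D^{(n)}\le C D_0^{(n)}\to0$ directly.
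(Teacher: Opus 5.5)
Your argument is correct and follows the paper's route. The paper gives no separate proof: it says the corollary follows immediately from Theorem~\ref{lien VNS et ENS}, and you do exactly that, applying the theorem with $\varepsilon=\eta$ for all large $n$, with $C$ uniform in $n$ since every datum satisfies the same smallness conditions. Your extra remarks (monotonicity in $t$ to pass to global-in-time norms, and the observation that the stability argument actually gives $D^{(n)}\lesssim D_0^{(n)}$) are harmless refinements of the same argument.
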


\begin{coro}[Monokinetic limit for Vlasov--Navier--Stokes]
Under the assumptions of the previous corollary, define
\[
f^{(n)}
:=
\int_I
\rho_\alpha^{(n)}\otimes
\delta_{v=w_\alpha^{(n)}}\,d\mu .
\]

Then $(f^{(n)},u^{(n)},P^{(n)})$ is a sequence of solutions to
\eqref{VNS}, and
\[
f^{(n)}
\rightharpoonup
\rho\otimes\delta_{v=w}
\]
in the sense of distributions on
$\R^+\times\R^d\times\R^d$.

Moreover, if $\rho_0\in L^1(\R^d)$, then
\[
\widetilde W_1\!\left(
f^{(n)},
\rho\otimes \delta_{v=w}
\right)
\longrightarrow 0.
\]
\end{coro}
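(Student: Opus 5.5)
The statement is the second corollary (monokinetic limit). I will derive it from Theorem~\ref{lien VNS et ENS} together with the first corollary, checking three things in turn: that each $f^{(n)}$ solves \eqref{VNS}, that $f^{(n)}\rightharpoonup\rho\otimes\delta_{v=w}$ in distributions, and that the $\widetilde W_1$ distance tends to $0$.

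\emph{Step 1: $(f^{(n)},u^{(n)},P^{(n)})$ solves \eqref{VNS}.} Theorem~\ref{lien VNS et ENS} already asserts this, but I would recheck it directly. Fix a test function $\phi(t,x,v)$. By definition,
\[
\langle f^{(n)},\phi\rangle=\int_0^\infty\!\!\int_I\!\int_{\R^d}\rho_\alpha^{(n)}(t,x)\,\phi\bigl(t,x,w_\alpha^{(n)}(t,x)\bigr)\,dx\,d\mu\,dt .
\]
\begin{itemize}
\item I differentiate $\phi(t,x,w_\alpha^{(n)}(t,x))$ along the flow of $w_\alpha^{(n)}$.
\item I use the continuity equation for $\rho^{(n)}_\alpha$ and the equation $\partial_t w_\alpha+(w_\alpha\cdot\nabla)w_\alpha=u-w_\alpha$ from \eqref{Euler-Navier-Stokes-multi2}.
\item This reproduces the weak form of the Vlasov equation with drag $u-v$.
\end{itemize}
This is justified because $w_\alpha^{(n)}\in L^\infty_t(\dot B^{\frac d2+1}_{2,1})\subset L^\infty_t W^{1,\infty}$ uniformly in $\alpha$, and $\rho_\alpha\in L^p_\alpha L^\infty_t(\dot B^{\frac d2}_{2,1})\subset L^p_\alpha L^\infty_{t,x}$. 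For the fluid equation, note that
\[
j_f-\rho_f u=\int_I\rho_\alpha(w_\alpha-u)\,d\mu ,
\]
which is exactly the coupling term in \eqref{Euler-Navier-Stokes-multi2}.

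\emph{Step 2: distributional convergence.} For a test function $\phi\in C_c^\infty$, I split
\[
\rho_\alpha^{(n)}\phi(w_\alpha^{(n)})-\rho\,\phi(w)=(\rho_\alpha^{(n)}-\rho)\,\phi(w_\alpha^{(n)})+\rho\,\bigl(\phi(w_\alpha^{(n)})-\phi(w)\bigr).
\]
\begin{itemize}
\item For the first term, I pair $\rho_\alpha^{(n)}-\rho$ in $\dot B^{\frac d2-1}_{2,1}$ against $\phi(\cdot,w_\alpha^{(n)})$. The latter is compactly supported in $x$ and bounded in $\dot B^{1-\frac d2}_{2,\infty}$, or more simply in $L^2\cap \dot H^1$ on a compact set. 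Duality then gives a bound by $\|\rho^{(n)}_\alpha-\rho\|_{\dot B^{\frac d2-1}_{2,1}}$.
\item For the second term, I use the Lipschitz bound $|\phi(w_\alpha^{(n)})-\phi(w)|\le \|\nabla_v\phi\|_\infty|w_\alpha^{(n)}-w|$, with $\rho\in L^\infty$ and $\dot B^{\frac d2}_{2,1}\subset L^\infty$.
\end{itemize}
Integrating in $\mu$ and applying Hölder in $\alpha$ with exponents $p,p'$ (recall $\mu$ is a probability measure), the first corollary gives convergence to $0$ locally uniformly in $t$.

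\emph{Step 3: Wasserstein bound.} Here I apply \eqref{convergence} from Theorem~\ref{lien VNS et ENS} with $\varepsilon=D_0^{(n)}$, which is admissible once $D_0^{(n)}$ is small for $n$ large. Rechecking that bound uses the same splitting, now for $\phi$ Lipschitz with $\phi\in L^{d'}_xL^\infty_v$:
\begin{itemize}
\item For the first term, I use the embedding $\dot B^{\frac d2-1}_{2,1}\subset L^d$, paired with $\|\phi\|_{L^{d'}_xL^\infty_v}\le1$.
\item For the second term, the integrand is at most $\rho\,|w^{(n)}_\alpha-w|$. I bound it using $\rho\in L^1\cap L^\infty$, with the $L^1$ bound propagated by mass conservation, and $w^{(n)}_\alpha-w\in L^\infty$.
\end{itemize}

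The main obstacle is the weak pairing in the first term. $\rho^{(n)}_\alpha-\rho$ is controlled only in the negative-regularity-type space $\dot B^{\frac d2-1}_{2,1}$, while the test function is composed with $w_\alpha^{(n)}$. I would handle it via $L^d$ duality, since $\dot B^{\frac d2-1}_{2,1}\hookrightarrow L^d$ for $d\ge2$. This is precisely why the $L^{d'}_x$ constraint appears in the definition of $\widetilde W_1$.
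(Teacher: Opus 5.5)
Your proposal is correct and follows the paper's route. The corollary is read off from Theorem~\ref{lien VNS et ENS} and the first corollary, and your Step~3 reproduces the paper's splitting through the intermediate measure $\int_I\rho\otimes\delta_{v=w_\alpha}\,d\mu$: the density part uses the $L^d$--$L^{d'}$ pairing via $\dot B^{\frac d2-1}_{2,1}\hookrightarrow L^d$, and the velocity part uses the Lipschitz bound together with $\|\rho(t)\|_{L^1}=\|\rho_0\|_{L^1}$. Your Steps~1 and~2 fill in what the paper leaves implicit, and the localization in Step~2 (compact $x$-support of the test function, with $\rho\in L^\infty$ replacing $\rho\in L^1$) is what gives the distributional convergence without assuming $\rho_0\in L^1$.
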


\subsection{Sketch of the proofs}~\\
To begin with, establishing the well-posedness of system \eqref{Euler-Navier-Stokes-multi2} essentially relies on obtaining the a priori estimate \eqref{estimée théorème système 1 multi}, which constitutes the cornerstone of the analysis. To this end, we draw direct inspiration from the a priori estimates obtained in the single-phase case in \cite{MonENS}. The main additional technical difficulty here stems from the integration with respect to $\alpha$, which is intrinsic to the multiphase framework.

More precisely, we consider four distinct equations: those satisfied by $\rho_\alpha$, $w_\alpha$, $u$, and the difference $w_\alpha - u$. The analysis of the latter is naturally motivated by a spectral study of the following linearized system:
\[
\left\{
\begin{array}{l}
     \partial_t w_\alpha + w_\alpha - u = 0, \\
     \partial_t u - \Delta u = 0.
\end{array}
\right.
\]
Taking the Fourier transform, this system can be written in matrix form as
\[
\frac{d}{dt}
\begin{pmatrix}
\widehat{w}_\alpha \\
\widehat{u}
\end{pmatrix}
+
\begin{pmatrix}
\mathrm{Id} & -\mathrm{Id} \\
0 & |\xi|^2 \, \mathrm{Id}
\end{pmatrix}
\begin{pmatrix}
\widehat{w}_\alpha \\
\widehat{u}
\end{pmatrix}
= 0,
\]
whose eigenvalues are \(1\) and \(|\xi|^2\). In the low-frequency regime, the equation for $u$ is governed solely by the diffusive mode associated with $|\xi|^2$, leading to a loss of time integrability. To circumvent this, we introduce the \emph{damped mode} $w_\alpha - u$, which allows us to capture the missing slow dynamics. By controlling this mode in the Besov space of regularity $\frac{d}{2} - 1$ (the critical regularity for the Navier-Stokes equations), and combining the estimates for $w_\alpha$ and $u$, we are able to recover all the necessary information.

It is also worth noting that the estimates obtained in \cite{MonENS} for the equations on $\rho_\alpha$ and $w_\alpha$ remain valid here. One then simply integrates in $\alpha$ and applies Hölder's inequality to obtain the desired estimate in the multiphase setting.

To analyze the coupling between the equations for $u$ and $w_\alpha - u$, it is necessary to return to the details of the proof in \cite{MonENS}. The strategy involves frequency localization of the equation, taking the scalar product with the localized solution, and performing integration by parts. This is followed by successive applications of the Cauchy-Schwarz inequality (both in $x$ and in $\alpha$) to simplify the terms and prepare for the use of Lemma \ref{lemme edo}. Finally, the multiplication by $2^{js}$ (where $s$ is the desired regularity) followed by summation over $j\in\mathbb{Z}$ is easily justified, even in the multiphase framework, by a simple interchange between the sum and the integral. This point in particular highlights the advantage of using a summation index equal to $1$ in the definition of Besov norms.

As for uniqueness, it follows in a similar way: we adapt the stability estimates from \cite{MonENS} to the multiphase setting, which readily leads to the conclusion.

Then, in a second step, based on these estimates, we deduce the final theorem \ref{lien VNS et ENS}.

\section{Well-posedness}
Let us start by proving the a priori estimate \eqref{estimée théorème système 1 multi}.
\subsection{A priori estimate}
Let us give ourselves a smooth enough solution $((\rho_\alpha, w_\alpha)_{\alpha\in I}, u)$ to the system \eqref{Euler-Navier-Stokes-multi2} on $[0,T]\times \R^d$. 
\\
By \cite[Lemma 3.1]{MonENS}, we deduce the following lemma on the estimate of $\rho_\alpha$:
\begin{lemma}\label{équation de transport-multi}
    We have the following inequality on $\rho$ for all $t\in [0,T]$ and $\alpha\in I$: 
    \begin{equation}\label{estimée sur rho-multi} \|\rho_\alpha(t)\|_{\dot B_{2,1}^{\frac{d}{2}}}\leq \|\rho_{\alpha,0}\|_{\dot B_{2,1}^{\frac{d}{2}}}+C \int_0^t \|\rho_\alpha\|_{\dot B_{2,1}^{\frac{d}{2}}}\|w_\alpha\|_{\dot B_{2,1}^{\frac{d}{2}+1}}d\tau.\end{equation}

    We deduce by integration in $\alpha$ for all $t\in [0,T]$ : 
    \begin{equation}\label{estimée sur rho-multi2} \|\rho_\alpha\|_{L_\alpha^p L_t^\infty (\dot B_{2,1}^{\frac{d}{2}})}\leq \|\rho_{\alpha,0}\|_{L_\alpha^p (\dot B_{2,1}^{\frac{d}{2}})}+C \|\rho_\alpha\|_{L_\alpha^p L_t^\infty (\dot B_{2,1}^{\frac{d}{2}})}\|w_\alpha\|_{L_\alpha^\infty L_t^1(\dot B_{2,1}^{\frac{d}{2}+1})}.\end{equation}
\end{lemma}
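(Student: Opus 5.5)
The first inequality \eqref{estimée sur rho-multi} is, for each fixed $\alpha\in I$, exactly the single-phase transport estimate of \cite[Lemma 3.1]{MonENS}. Indeed, for fixed $\alpha$ the function $\rho_\alpha$ solves the continuity equation $\partial_t\rho_\alpha+\dive(\rho_\alpha w_\alpha)=0$ with velocity field $w_\alpha$, and this is the same equation treated there. The estimate of \cite{MonENS} only uses this equation, not the coupling with $u$.

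To recall how it goes: localize the equation with $\dot\Delta_j$ and write
\[
\partial_t\rho_{\alpha,j}+w_\alpha\cdot\nabla\rho_{\alpha,j}=-\dot\Delta_j(\rho_\alpha\dive w_\alpha)+[w_\alpha\cdot\nabla,\dot\Delta_j]\rho_\alpha .
\]
Take the $L^2$ scalar product with $\rho_{\alpha,j}$ and integrate by parts in the transport term, which costs $\|\dive w_\alpha\|_{L^\infty}$. Bound the product term by the product law $\dot B^{d/2}_{2,1}\times\dot B^{d/2}_{2,1}\to\dot B^{d/2}_{2,1}$, and the commutator by the standard commutator estimate, which gives a sequence in $\ell^1$ times $2^{-jd/2}\|\nabla w_\alpha\|_{\dot B^{d/2}_{2,1}}\|\rho_\alpha\|_{\dot B^{d/2}_{2,1}}$. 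Then divide by $\|\rho_{\alpha,j}\|_{L^2}$ (Lemma \ref{lemme edo}), multiply by $2^{jd/2}$, sum over $j$, and use $\|\nabla w_\alpha\|_{L^\infty}\lesssim\|w_\alpha\|_{\dot B^{d/2+1}_{2,1}}$. This gives \eqref{estimée sur rho-multi} with a constant $C$ independent of $\alpha$, which is the only point that must be checked.

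For \eqref{estimée sur rho-multi2}, take the supremum over $t\in[0,T]$ in \eqref{estimée sur rho-multi}. In the integral, bound $\|\rho_\alpha(\tau)\|_{\dot B^{d/2}_{2,1}}\le\|\rho_\alpha\|_{L^\infty_t(\dot B^{d/2}_{2,1})}$, so that
\[
\|\rho_\alpha\|_{L^\infty_t(\dot B^{d/2}_{2,1})}\le\|\rho_{\alpha,0}\|_{\dot B^{d/2}_{2,1}}+C\|\rho_\alpha\|_{L^\infty_t(\dot B^{d/2}_{2,1})}\|w_\alpha\|_{L^1_t(\dot B^{d/2+1}_{2,1})}
\]
for every $\alpha$. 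Next take the $L^p_\alpha$ norm of both sides and apply Minkowski's inequality. For the product, use Hölder in $\alpha$ with exponents $(p,\infty)$: the $L^p_\alpha$ norm of the product is at most $\|\rho_\alpha\|_{L^p_\alpha L^\infty_t}\,\|w_\alpha\|_{L^\infty_\alpha L^1_t}$. This is exactly \eqref{estimée sur rho-multi2}, and it holds for every $p\in[1,\infty]$, including $p=\infty$ with the essential supremum.

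The only delicate point is measurability in $\alpha$ of the maps $\alpha\mapsto\|\rho_\alpha\|_{L^\infty_t(\dot B^{d/2}_{2,1})}$ and $\alpha\mapsto\|w_\alpha\|_{L^1_t(\dot B^{d/2+1}_{2,1})}$. For the smooth solutions considered here, this follows from the measurability built into the definition of the spaces $L^q_\alpha(I;L^r(\R^+;X))$. Apart from that, no real obstacle is expected, since the multiphase structure enters only through the final Hölder step.
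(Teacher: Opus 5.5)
Your proposal is correct and follows the paper's route. The paper also obtains the per-phase bound by applying \cite[Lemma 3.1]{MonENS} for each fixed $\alpha$; your $\dot\Delta_j$-localization sketch, with product and commutator estimates and an $\alpha$-independent constant, is the standard argument behind that lemma. It then gets the second inequality the same way you do: supremum in time, then the $L^p_\alpha$ norm, then H\"older in $\alpha$ with exponents $(p,\infty)$.
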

Assuming there exists $\widetilde{T}>0$ such that $\widetilde{T}\leq T$ and on $[0,\widetilde{T}]$ \begin{eqnarray}\label{hypothèse de petitesse 2}
   \|w_\alpha\|_{L_\alpha^\infty L_t^1(\dot B_{2,1}^{\frac{d}{2}+1})} \leq \frac{1}{2C},
\end{eqnarray}
where $C$ is the constant appearing in Lemma \ref{équation de transport-multi}, then we have for $t\in [0,\widetilde{T}]$: \begin{eqnarray}\label{estimée sur rho gronwall}
    \|\rho_\alpha\|_{L_\alpha^p L_t^\infty (\dot B_{2,1}^{\frac{d}{2}})}\leq 2\|\rho_{\alpha,0}\|_{L_\alpha^p (\dot B_{2,1}^{\frac{d}{2}})}.
\end{eqnarray}

We have the following lemma for estimating the velocity satisfying the Navier-Stokes equation:
\begin{lemma}\label{lemme estimée sur v multi}

There exists $C>0$ such that for all $t\in[0,\widetilde{T}]$ : 
\begin{multline}\label{estimée sur v multi} \|u\|_{L_t^\infty (\dot B_{2,1}^{\frac{d}{2}-1})}+\frac{1}{c_B} \|u\|_{L_t^1(\dot B_{2,1}^{\frac{d}{2}+1})} \\ \leq \|u_0\|_{\dot B_{2,1}^{\frac{d}{2}-1}} +C\|u\|_{L_t^\infty(\dot B_{2,1}^{\frac{d}{2}-1})}\|u\|_{L_t^1(\dot B_{2,1}^{\frac{d}{2}+1})} \\ +C\|\rho_{\alpha,0}\|_{L_\alpha^p (\dot B_{2,1}^{\frac{d}{2}})}\|w_\alpha-u\|_{L_\alpha^{p'}L_t^1(\dot B_{2,1}^{\frac{d}{2}-1})},\end{multline} where $c_B$ is the constant defined by \eqref{constante de Bernstein}.
\end{lemma}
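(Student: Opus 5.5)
We prove Lemma \ref{lemme estimée sur v multi} by a frequency-localized energy method on the Navier--Stokes equation, treating the coupling term as a source. We first write the forcing $\int_I \rho_\alpha(w_\alpha-u)\,d\mu(\alpha)$ and apply the Leray projector $\mathbb P$ onto divergence-free fields to eliminate the pressure:
\[
\partial_t u-\Delta u=-\mathbb P\big((u\cdot\nabla)u\big)+\mathbb P\int_I \rho_\alpha(w_\alpha-u)\,d\mu(\alpha).
\]
We then apply $\dot\Delta_j$, take the $L^2$ scalar product with $u_j$, and integrate by parts in the diffusion term. By \eqref{constante de Bernstein} with $k=1$ we get $\|\nabla u_j\|_{L^2}^2\geq c_B^{-1}2^{2j}\|u_j\|_{L^2}^2$. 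The boundedness of $\mathbb P$ on each $L^2$ block and Cauchy--Schwarz in $x$ give
\[
\frac12\frac{d}{dt}\|u_j\|_{L^2}^2+\frac{2^{2j}}{c_B}\|u_j\|_{L^2}^2\leq \Big(\|\dot\Delta_j((u\cdot\nabla)u)\|_{L^2}+\Big\|\dot\Delta_j\int_I\rho_\alpha(w_\alpha-u)\,d\mu\Big\|_{L^2}\Big)\|u_j\|_{L^2}.
\]
Lemma \ref{lemme edo} (the ODE lemma) then yields
\[
\|u_j(t)\|_{L^2}+\frac{2^{2j}}{c_B}\int_0^t\|u_j\|_{L^2}\leq \|u_j(0)\|_{L^2}+\int_0^t\Big(\|\dot\Delta_j((u\cdot\nabla)u)\|_{L^2}+\Big\|\dot\Delta_j\int_I\rho_\alpha(w_\alpha-u)\,d\mu\Big\|_{L^2}\Big).
\]
We multiply by $2^{j(\frac d2-1)}$, take the supremum in time, and sum over $j\in\Z$. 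Since the summation index is $1$, Fubini lets us move the sum over $j$ inside the integrals in $\tau$ and $\alpha$. Because $\mathbb P$ commutes with $\dot\Delta_j$, we may keep $\mathbb P$ outside the blocks, and Minkowski's inequality gives $\|\dot\Delta_j\int_I g_\alpha\,d\mu\|_{L^2}\leq\int_I\|\dot\Delta_j g_\alpha\|_{L^2}\,d\mu$. We obtain
\[
\|u\|_{L_t^\infty(\dot B_{2,1}^{\frac d2-1})}+\frac1{c_B}\|u\|_{L_t^1(\dot B_{2,1}^{\frac d2+1})}\leq \|u_0\|_{\dot B_{2,1}^{\frac d2-1}}+\|(u\cdot\nabla)u\|_{L_t^1(\dot B_{2,1}^{\frac d2-1})}+\int_I\|\rho_\alpha(w_\alpha-u)\|_{L_t^1(\dot B_{2,1}^{\frac d2-1})}\,d\mu.
\]

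It remains to bound the two nonlinear terms by the standard product laws in $\dot B^s_{2,1}$. For the convection term we use $\dive u=0$ to write $(u\cdot\nabla)u=\dive(u\otimes u)$. The product law $\dot B_{2,1}^{\frac d2-1}\times\dot B_{2,1}^{\frac d2}\to\dot B_{2,1}^{\frac d2-1}$ (valid for $d\geq2$) gives
\[
\|(u\cdot\nabla)u\|_{\dot B_{2,1}^{\frac d2-1}}\lesssim\|u\|_{\dot B_{2,1}^{\frac d2-1}}\|\nabla u\|_{\dot B_{2,1}^{\frac d2}}.
\]
Integrating in time then produces the term $C\|u\|_{L_t^\infty(\dot B^{\frac d2-1}_{2,1})}\|u\|_{L_t^1(\dot B^{\frac d2+1}_{2,1})}$.

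For the coupling term, the same product law gives, for each $\alpha$,
\[
\|\rho_\alpha(w_\alpha-u)\|_{\dot B_{2,1}^{\frac d2-1}}\lesssim\|\rho_\alpha\|_{\dot B_{2,1}^{\frac d2}}\|w_\alpha-u\|_{\dot B_{2,1}^{\frac d2-1}}.
\]
Integrating in time we bound $\rho_\alpha$ by its $L_t^\infty$ norm and keep $w_\alpha-u$ in $L_t^1$. Hölder's inequality in $\alpha$ with exponents $(p,p')$ then gives
\[
\int_I\|\rho_\alpha(w_\alpha-u)\|_{L_t^1(\dot B_{2,1}^{\frac d2-1})}\,d\mu\lesssim\|\rho_\alpha\|_{L_\alpha^pL_t^\infty(\dot B_{2,1}^{\frac d2})}\|w_\alpha-u\|_{L_\alpha^{p'}L_t^1(\dot B_{2,1}^{\frac d2-1})}.
\]
Finally, on $[0,\widetilde T]$ the bound \eqref{estimée sur rho gronwall} replaces $\|\rho_\alpha\|_{L_\alpha^pL_t^\infty(\dot B_{2,1}^{\frac d2})}$ by $2\|\rho_{\alpha,0}\|_{L_\alpha^p(\dot B_{2,1}^{\frac d2})}$, which gives \eqref{estimée sur v multi}.

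The delicate points are twofold. First, the measurability and integrability in $\alpha$ needed to justify the Fubini and Minkowski interchanges, which hold for smooth solutions. Second, the product law at the endpoint regularity $\frac d2-1$ in dimension $d=2$, where the index is $0$. There the product $\dot B^{0}_{2,1}\times\dot B^{1}_{2,1}\to\dot B^{0}_{2,1}$ still holds because $s_1+s_2=1>0$, so no extra argument is needed. I expect the only real care to be in applying Lemma \ref{lemme edo} with the Bernstein constant $c_B$ kept explicit, so that the factor $\frac1{c_B}$ appears on the left as stated.
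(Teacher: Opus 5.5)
Your proposal is correct and follows essentially the same route as the paper. You localize the Leray-projected Navier--Stokes equation with $\dot\Delta_j$, use the $L^2$ energy identity with Bernstein's inequality and Lemma~\ref{lemme edo}, and sum with weight $2^{j(\frac d2-1)}$. You then conclude with the product laws of Lemma~\ref{Produit espace de Besov}, H\"older's inequality in $\alpha$, and the bound $\|\rho_\alpha\|_{L^p_\alpha L^\infty_t(\dot B^{\frac d2}_{2,1})}\leq 2\|\rho_{\alpha,0}\|_{L^p_\alpha(\dot B^{\frac d2}_{2,1})}$ on $[0,\widetilde T]$, exactly as the paper does. Two small remarks. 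Your Minkowski step in $\alpha$ before summing in $j$ handles the $\alpha$-integral slightly more cleanly than the paper's single sequence $(c_j)$. The constant in $\|\nabla u_j\|_{L^2}^2\geq c_B^{-1}2^{2j}\|u_j\|_{L^2}^2$ is the paper's own normalization; read literally, \eqref{constante de Bernstein} with $k=1$ gives $c_B^{-2}$, which only affects the constant.
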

\begin{proof}
The proof is similar to that of \cite[Lemma 3.2]{MonENS}. However, we must be careful when studying the non-linear term $\int_I \rho_\alpha (w_\alpha-u)d\mu$.

By applying the operator $\dot\Delta_j$ to the last equation of the system \eqref{Euler-Navier-Stokes-multi2}, we study the following system: $$ \partial_t u_j-\Delta u_j=-\dot\Delta_j \mathbb{P}(u\cdot \nabla u)+\dot\Delta_j \mathbb{P}\int_I (\rho_\alpha(w_\alpha-u))d\mu.$$

Taking the scalar product with $u_j$ on the first equation of the system, we have:
$$\displaylines{\frac{1}{2}\frac{d}{dt}\|u_j\|_{L^2}^2-\int_{\R^d}\Delta u_j\cdot u_j dx = -\int_{\R^d} \dot\Delta_j (u\cdot \nabla u)\cdot u_j dx \hfill\cr\hfill +\int_{\R^d} \dot\Delta_j \int_I (\rho_\alpha(w_\alpha-u))d\mu \cdot u_j dx.}$$

By integration by parts, we have: $$-\int_{\R^d}\Delta u_j\cdot u_j dx=\int_{\R^d}|\nabla u_j|^2 dx=\|\nabla u_j\|_{L^2}^2.$$

By \eqref{constante de Bernstein}, we obtain: $$\frac{1}{\sqrt{c_B}} 2^{j}\|u_j\|_{L^2} \leq \|\nabla u_j\|_{L^2}.$$

We also have the Cauchy-Schwarz inequality in $x$:
$$\displaylines{\int_{\R^d} \dot\Delta_j \int_I (\rho_\alpha(w_\alpha-u))d\mu \cdot u_j dx=\int_I \int_{\R^d}(\dot\Delta_j (\rho_\alpha(w_\alpha-u)) )\cdot u_j d\mu \hfill\cr\hfill \leq \int_I \|\dot\Delta_j(\rho_\alpha(w_\alpha-u))\|_{L^2} d\mu \times \|u_j\|_{L^2}.}$$

By Lemma \ref{Produit espace de Besov}, we deduce the existence of a constant $C>0$ and a subsequence $(c_j)_{j\in\Z}$ such that $\sum_j c_j\leq 1$ and
$$\displaylines{\int_{\R^d} \dot\Delta_j \int_I (\rho_\alpha(w_\alpha-u))d\mu \cdot u_j dx \hfill\cr \leq c_j 2^{-j(\frac{d}{2}-1)} \bigg(\int_I \|\rho_\alpha (w_\alpha-u)\|_{\dot B_{2,1}^{\frac{d}{2}-1}}d\mu\bigg) \|u_j\|_{L^2} \cr\hfill \leq C c_j 2^{-j(\frac{d}{2}-1)} \bigg(\int_I \|\rho_\alpha\|_{\dot B_{2,1}^{\frac{d}{2}}}\|w_\alpha-u\|_{\dot B_{2,1}^{\frac{d}{2}-1}}d\mu\bigg) \|u_j\|_{L^2}.}$$

We also have by Lemma \ref{Produit espace de Besov}: \begin{align*}\int_{\R^d} \dot\Delta_j(u\cdot \nabla u)\cdot u_j dx & \leq c_j 2^{-j(\frac{d}{2}-1)}\|(u\cdot\nabla)u\|_{\dot B_{2,1}^{\frac{d}{2}-1}}\|u_j\|_{L^2} \\ &  \leq C c_j 2^{-j(\frac{d}{2}-1)}\|u\|_{\dot B_{2,1}^{\frac{d}{2}-1}}\|\nabla u\|_{\dot B_{2,1}^{\frac{d}{2}}}\|u_j\|_{L^2}\\ & \leq C c_j 2^{-j(\frac{d}{2}-1)}\|u\|_{\dot B_{2,1}^{\frac{d}{2}-1}}\|u\|_{\dot B_{2,1}^{\frac{d}{2}+1}}\|u_j\|_{L^2}.\end{align*}

By Lemma \ref{lemme edo}, by multiplying by $2^{j(\frac{d}{2}-1)}$ and summing over $j\in\Z$, we obtain: $$\displaylines{\|u(t)\|_{\dot B_{2,1}^{\frac{d}{2}-1}}+\frac{1}{c_B} \int_0^t \|u\|_{\dot B_{2,1}^{\frac{d}{2}+1}}d\tau \leq \|u_0\|_{\dot B_{2,1}^{\frac{d}{2}+1}}+\int_0^t \|u\|_{\dot B_{2,1}^{\frac{d}{2}-1}}\|u\|_{\dot B_{2,1}^{\frac{d}{2}+1}} d\tau\hfill\cr\hfill+\int_I\int_0^t \|\rho_\alpha\|_{\dot B_{2,1}^{\frac{d}{2}}}\|w_\alpha-u\|_{\dot B_{2,1}^{\frac{d}{2}-1}}d\tau d\mu.}$$

Hence the estimate of the lemma by the Hölder inequality in $\alpha$ and the inequality \eqref{estimée sur rho gronwall}.
\end{proof}

We also have the following lemma, a consequence of \cite[Lemma 3.3]{MonENS}:

\begin{lemma}\label{estimée sur u lemme multi}
    We have the following inequality for all $t\in[0,T]$ and $\alpha\in I$:
    \begin{multline}\label{estimée sur u multi1}\|w_\alpha(t)\|_{\dot B_{2,1}^{\frac{d}{2}+1}}+\int_0^t \|w_\alpha\|_{\dot B_{2,1}^{\frac{d}{2}+1}}d\tau \leq \|w_{\alpha,0}\|_{\dot B_{2,1}^{\frac{d}{2}+1}}+C\int_0^t \|w_\alpha\|_{\dot B_{2,1}^{\frac{d}{2}+1}}^2 d\tau \\+\int_0^t \|u\|_{\dot B_{2,1}^{\frac{d}{2}+1}}d\tau.\end{multline}

    We deduce by integration in $\alpha$ for $q\in [p',\infty]$: 
    \begin{multline}\label{estimée sur u multi}\|w_\alpha\|_{L_\alpha^{q} L_t^\infty(\dot B_{2,1}^{\frac{d}{2}+1})}+\|w_\alpha\|_{L_\alpha^{q} L_t^1(\dot B_{2,1}^{\frac{d}{2}+1})} \\ \leq \|w_{\alpha,0}\|_{L_\alpha^{q}(\dot B_{2,1}^{\frac{d}{2}+1})}+\|u\|_{L_t^1(\dot B_{2,1}^{\frac{d}{2}+1})} +C\|w_\alpha\|_{L_\alpha^\infty L_t^\infty (\dot B_{2,1}^{\frac{d}{2}+1})}\|w_\alpha\|_{L_\alpha^{q} L_t^1(\dot B_{2,1}^{\frac{d}{2}+1})}. \end{multline}
\end{lemma}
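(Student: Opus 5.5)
The estimate is proved for fixed $\alpha$ first and then integrated in $\alpha$. For fixed $\alpha\in I$, the second equation of \eqref{Euler-Navier-Stokes-multi2} reads
\[
\partial_t w_\alpha+(w_\alpha\cdot\nabla)w_\alpha+w_\alpha=u .
\]
This is exactly the damped Burgers-type equation treated in \cite[Lemma 3.3]{MonENS}, with the given source term $u$. We therefore reproduce that argument: apply $\dot\Delta_j$ and take the $L^2$ scalar product with $\dot\Delta_j w_\alpha$.

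The damping term produces $\|\dot\Delta_j w_\alpha\|_{L^2}^2$. The source term is bounded by Cauchy--Schwarz by $\|\dot\Delta_j u\|_{L^2}\|\dot\Delta_j w_\alpha\|_{L^2}$. For the convection term we write
\[
\dot\Delta_j\big((w_\alpha\cdot\nabla)w_\alpha\big)=(w_\alpha\cdot\nabla)\dot\Delta_j w_\alpha+[\dot\Delta_j,w_\alpha\cdot\nabla]w_\alpha .
\]
After integration by parts, the first piece is bounded by $\tfrac12\|\dive w_\alpha\|_{L^\infty}\|\dot\Delta_j w_\alpha\|_{L^2}^2$. The commutator is bounded by the standard commutator estimate (appendix, as in \cite{BCD}) by $C c_j 2^{-j(\frac d2+1)}\|\nabla w_\alpha\|_{\dot B_{2,1}^{\frac d2}}\|w_\alpha\|_{\dot B_{2,1}^{\frac d2+1}}$. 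Using the embedding $\dot B_{2,1}^{\frac d2}\hookrightarrow L^\infty$, both convection contributions are controlled by $C\|w_\alpha\|_{\dot B_{2,1}^{\frac d2+1}}$ times the localized norm.

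Applying Lemma \ref{lemme edo} then gives, for each $j$,
\[
\|\dot\Delta_j w_\alpha(t)\|_{L^2}+\int_0^t\|\dot\Delta_j w_\alpha\|_{L^2}\,d\tau\le \|\dot\Delta_j w_{\alpha,0}\|_{L^2}+\int_0^t\|\dot\Delta_j u\|_{L^2}\,d\tau+C\int_0^t c_j2^{-j(\frac d2+1)}\|w_\alpha\|^2_{\dot B_{2,1}^{\frac d2+1}}\,d\tau .
\]
We multiply by $2^{j(\frac d2+1)}$ and sum over $j\in\Z$; this is legitimate because the Besov summation index is $1$. The result is \eqref{estimée sur u multi1}. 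It holds pointwise in $\alpha$, and since the source $u$ does not depend on $\alpha$, its contribution carries no $\alpha$-dependence.

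Next we take the supremum over $t$ in \eqref{estimée sur u multi1}; the terms on the right are nondecreasing in $t$, so this is harmless. The nonlinear term is bounded as
\[
\int_0^t\|w_\alpha\|^2_{\dot B_{2,1}^{\frac d2+1}}\,d\tau\le \|w_\alpha\|_{L_t^\infty(\dot B_{2,1}^{\frac d2+1})}\,\|w_\alpha\|_{L_t^1(\dot B_{2,1}^{\frac d2+1})},
\]
and then $\|w_\alpha\|_{L_t^\infty(\dot B_{2,1}^{\frac d2+1})}$ is bounded by $\|w_\alpha\|_{L_\alpha^\infty L_t^\infty(\dot B_{2,1}^{\frac d2+1})}$. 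We take the $L_\alpha^q$ norm of both sides using Minkowski's inequality. The term $\|u\|_{L_t^1(\dot B_{2,1}^{\frac d2+1})}$ is constant in $\alpha$, and since $\mu$ is a probability measure its $L^q_\alpha$ norm equals itself for every $q\in[1,\infty]$. Finally, the two terms on the left are each bounded by the left-hand side, whose $L^q_\alpha$ norm dominates the sum of their $L^q_\alpha$ norms up to a factor $2$, which is absorbed into the constants.

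The main obstacle is this last point: the left-hand side of \eqref{estimée sur u multi} is a sum of two $L^q_\alpha$ norms, not the $L^q_\alpha$ norm of a sum. One should check that a factor $2$ (or the triangle inequality in the reverse direction) is acceptable. If it is not, we apply the $L^q_\alpha$ norm to each left-hand term separately, since each is individually bounded by the right-hand side of \eqref{estimée sur u multi1}. The commutator step itself is routine and is taken directly from \cite{MonENS}.
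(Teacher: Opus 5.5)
Your proof is correct and follows the paper's route. The paper obtains the first inequality directly from \cite[Lemma 3.3]{MonENS} for each fixed $\alpha$; that proof is the localization, damping, commutator and Lemma~\ref{lemme edo} argument you reproduce. It then takes the $L^q_\alpha$ norm, using that $u$ does not depend on $\alpha$ and that $\mu$ is a probability measure.

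The factor $2$ you flag is genuine and cannot be removed; your fallback of taking the $L^q_\alpha$ norm of each left-hand term separately still produces it. Take $\rho_{\alpha,0}=0$, $u_0=0$ and a small $w_{\alpha,0}=w_0$. Then $u\equiv 0$ and $w_\alpha=e^{-t}w_0$ up to quadratic terms. The left-hand side of the second inequality is then about $(2-e^{-t})\|w_0\|_{\dot B_{2,1}^{\frac d2+1}}$, while the right-hand side is $\|w_0\|_{\dot B_{2,1}^{\frac d2+1}}$ plus quadratic terms. So that inequality only holds with a factor $2$ in front of $\|w_{\alpha,0}\|$ and $\|u\|$, which is exactly what your argument gives. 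This is harmless for the later bootstrap once the weights in $\mathcal{L}$ and $\mathcal{H}$ are adjusted.
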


Similarly, for the damped mode $w_\alpha - u$, whose equation can be written using the Leray projector $\mathbb{P}$ as
\[
\partial_t (w_\alpha - u) + w_\alpha - u = -\Delta u - \mathbb{P} \int \big( \rho_\alpha (w_\alpha - u) \big) \, d\mu(\alpha) + \mathbb{P}(u \cdot \nabla) u - (w_\alpha \cdot \nabla) w_\alpha,
\]
we obtain the following estimate in $\dot B_{2,1}^{\frac{d}{2}-1}$, which follows from \cite[Lemma 3.4]{MonENS} after integration with respect to $\alpha$ and Hölder inequality:

\begin{lemma}\label{w-u}
    We have the following estimate for $t\in [0,\widetilde{T}]$: 
    \begin{multline}\label{estimée sur u-v multi}\|w_\alpha-u\|_{L_\alpha^{p'} L_t^\infty(\dot B_{2,1}^{\frac{d}{2}-1})}+\|w_\alpha-u\|_{L_\alpha^{p'} L_t^1(\dot B_{2,1}^{\frac{d}{2}-1})} \\ \leq \|w_{\alpha,0}-u_0\|_{L_\alpha^{p'}(\dot B_{2,1}^{\frac{d}{2}-1})}+c_B \|u\|_{L_t^1(\dot B_{2,1}^{\frac{d}{2}+1})}  +C \|\rho_{\alpha,0}\|_{L_\alpha^{p} (\dot B_{2,1}^{\frac{d}{2}})}\|w_\alpha-u\|_{L_\alpha^{p'} L_t^1(\dot B_{2,1}^{\frac{d}{2}-1})} \\ +C \|w_\alpha-u\|_{L_\alpha^{p'}L_t^\infty(\dot B_{2,1}^{\frac{d}{2}-1})}\|w_\alpha\|_{L_\alpha^{\infty} L_t^1(\dot B_{2,1}^{\frac{d}{2}+1})} \\ +C \|u\|_{L_t^\infty(\dot B_{2,1}^{\frac{d}{2}-1})}\left(\|w_\alpha\|_{L_\alpha^{p'} L_t^1(\dot B_{2,1}^{\frac{d}{2}+1})}+\|u\|_{L_t^1(\dot B_{2,1}^{\frac{d}{2}+1})}\right), \end{multline}
    where $c_B$ is the constant defined \ref{constante de Bernstein}.
\end{lemma}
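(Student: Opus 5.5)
We work at a fixed phase $\alpha$ and start from the equation for $w_\alpha-u$ stated just above. Apply $\dot\Delta_j$, take the $L^2$ scalar product with $(w_\alpha-u)_j$, and use the Cauchy--Schwarz inequality in $x$. The damping term then gives
\[
\tfrac12\tfrac{d}{dt}\|(w_\alpha-u)_j\|_{L^2}^2+\|(w_\alpha-u)_j\|_{L^2}^2\le \|F_{\alpha,j}\|_{L^2}\|(w_\alpha-u)_j\|_{L^2},
\]
where $F_\alpha$ collects the four source terms. Lemma \ref{lemme edo} yields
\[
\|(w_\alpha-u)_j(t)\|_{L^2}+\int_0^t\|(w_\alpha-u)_j\|_{L^2}\le \|(w_{\alpha,0}-u_0)_j\|_{L^2}+\int_0^t\|F_{\alpha,j}\|_{L^2}.
\]
Multiplying by $2^{j(\frac d2-1)}$ and summing over $j$ gives the per-$\alpha$ estimate, which is exactly \cite[Lemma 3.4]{MonENS}. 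We then treat each source term in $\dot B_{2,1}^{\frac d2-1}$.

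\textbf{Laplacian term.} By \eqref{constante de Bernstein}, $\|\Delta u\|_{\dot B_{2,1}^{\frac d2-1}}\le c_B\|u\|_{\dot B_{2,1}^{\frac d2+1}}$, which produces the term $c_B\|u\|_{L^1_t(\dot B^{\frac d2+1}_{2,1})}$.

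\textbf{Drag term.} $\mathbb P$ is bounded on homogeneous Besov spaces, and Lemma \ref{Produit espace de Besov} gives
\[
\Big\|\mathbb P\int_I\rho_\beta(w_\beta-u)\,d\mu(\beta)\Big\|_{\dot B^{\frac d2-1}_{2,1}}\lesssim \int_I\|\rho_\beta\|_{\dot B^{\frac d2}_{2,1}}\|w_\beta-u\|_{\dot B^{\frac d2-1}_{2,1}}\,d\mu(\beta).
\]

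\textbf{Convection terms.} Using the same product lemma and $\|\nabla w_\alpha\|_{\dot B^{d/2}_{2,1}}\simeq\|w_\alpha\|_{\dot B^{d/2+1}_{2,1}}$,
\[
\|(u\cdot\nabla)u\|_{\dot B^{\frac d2-1}_{2,1}}\lesssim\|u\|_{\dot B^{\frac d2-1}_{2,1}}\|u\|_{\dot B^{\frac d2+1}_{2,1}}.
\]
For $(w_\alpha\cdot\nabla)w_\alpha$, write $w_\alpha=(w_\alpha-u)+u$. This gives
\[
\|(w_\alpha\cdot\nabla)w_\alpha\|_{\dot B^{\frac d2-1}_{2,1}}\lesssim \big(\|w_\alpha-u\|_{\dot B^{\frac d2-1}_{2,1}}+\|u\|_{\dot B^{\frac d2-1}_{2,1}}\big)\|w_\alpha\|_{\dot B^{\frac d2+1}_{2,1}}.
\]

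Next we take the $L^{p'}_\alpha$ norm of the per-$\alpha$ inequality, with the sup in time taken inside, using Minkowski's inequality. Most terms are straightforward:
\begin{itemize}
\item The initial and $c_B$ terms pass through directly, since $\mu$ is a probability measure and so $\|c\|_{L^{p'}_\alpha}=c$ for constants.
\item For $\int_0^t\|w_\alpha-u\|\|w_\alpha\|\,d\tau$, take the sup in time on the first factor. The $L^{p'}_\alpha$ norm is then bounded by $\|w_\alpha-u\|_{L^{p'}_\alpha L^\infty_t}\|w_\alpha\|_{L^\infty_\alpha L^1_t}$.
\item For $\|u\|_{L^\infty_t}\|w_\alpha\|_{L^1_t}$, the $L^{p'}_\alpha$ norm gives $\|u\|_{L^\infty_t}\|w_\alpha\|_{L^{p'}_\alpha L^1_t}$. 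The $u\cdot\nabla u$ term is independent of $\alpha$.
\end{itemize}

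The drag term is the only genuinely multiphase one, and it is the step requiring care. Its contribution is independent of $\alpha$, so its $L^{p'}_\alpha$ norm equals itself. Fubini gives
\[
\int_0^t\!\int_I\|\rho_\beta\|\|w_\beta-u\|\,d\mu\,d\tau\le\int_I\|\rho_\beta\|_{L^\infty_t}\|w_\beta-u\|_{L^1_t}\,d\mu.
\]
Hölder's inequality in $\beta$ with exponents $p,p'$ then bounds this by
\[
\|\rho_\alpha\|_{L^p_\alpha L^\infty_t(\dot B^{\frac d2}_{2,1})}\|w_\alpha-u\|_{L^{p'}_\alpha L^1_t(\dot B^{\frac d2-1}_{2,1})}.
\]
On $[0,\widetilde T]$, \eqref{estimée sur rho gronwall} replaces this by $2\|\rho_{\alpha,0}\|_{L^p_\alpha(\dot B^{\frac d2}_{2,1})}$ times the same factor, with the $2$ absorbed into $C$. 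This is why the statement is restricted to $t\in[0,\widetilde T]$. Collecting all the terms yields \eqref{estimée sur u-v multi}.

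The main points to check are the order of the sup in time and the $L^{p'}_\alpha$ norm, and the measurability in $\alpha$ needed for Fubini. Both are harmless for smooth solutions.
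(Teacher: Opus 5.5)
Your proof is correct and is essentially the paper's argument: the per-phase damped-mode estimate of \cite[Lemma 3.4]{MonENS} (localization, Lemma~\ref{lemme edo}, product laws with the splitting $w_\alpha=(w_\alpha-u)+u$ in the convection term), then the $L^{p'}_\alpha$ norm, with Hölder in the phase variable and \eqref{estimée sur rho gronwall} for the drag term. The one caveat, shared with the paper's statement, is that $X(t)+\int_0^t X\,d\tau\le R(t)$ only gives $\|X\|_{L^\infty_t}+\|X\|_{L^1_t}\le 2R(t)$, so the initial-data and $c_B$ terms should carry a harmless factor $2$; indeed, with $\mu$ a Dirac mass, $\rho_{\alpha,0}=0$, $u_0=0$, small $w_{\alpha,0}$ and $t\ge 1$, the left-hand side is about $(2-e^{-t})\|w_{\alpha,0}\|_{\dot B^{\frac d2-1}_{2,1}}$, so the inequality as literally written fails.
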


By noting \begin{equation}\label{définition de L multi}\begin{aligned}
    \mathcal{L}(t)\mathrel{\mathop:}=\|u\|_{L_t^\infty(\dot B_{2,1}^{\frac{d}{2}-1})}+\frac{1}{4c_B}\|w_\alpha\|_{L_\alpha^{p'} L_t^\infty(\dot B_{2,1}^{\frac{d}{2}+1})\cap L_\alpha^\infty L_t^\infty(\dot B_{2,1}^{\frac{d}{2}+1})} \\ +\frac{1}{2c_B^2}\|w_\alpha-u\|_{L_\alpha^{p'} L_t^\infty(\dot B_{2,1}^{\frac{d}{2}-1})}, \end{aligned}
\end{equation}
and
\begin{equation}\label{définition de H multi}\begin{aligned} \mathcal{H}(t)\mathrel{\mathop:}= \frac{1}{4c_B}\|u\|_{L_t^1(\dot B_{2,1}^{\frac{d}{2}+1})}+\frac{1}{4 c_B}\|w_\alpha\|_{L_\alpha^{p'} L_t^1(\dot B_{2,1}^{\frac{d}{2}+1})\cap L_\alpha^{\infty} L_t^1(\dot B_{2,1}^{\frac{d}{2}+1})} \\ +\frac{1}{4 c_B^2}\|w_\alpha-u\|_{L_\alpha^{p'} L_t^1(\dot B_{2,1}^{\frac{d}{2}-1})},\end{aligned}\end{equation}
next, by assuming the following smallness condition $$(2+\frac{1}{c_B^2})C\|\rho_{\alpha,0}\|_{L_\alpha^p (\dot B_{2,1}^{\frac{d}{2}})}\leq \frac{1}{4c_B^2}$$ we deduce from the results of Lemma \ref{équation de transport-multi}, \ref{lemme estimée sur v multi}, \ref{estimée sur u lemme multi} and \ref{w-u}: $$\mathcal{L}(t)+\mathcal{H}(t)\leq \mathcal{L}(0)+C\mathcal{L}(t)\mathcal{H}(t).$$

Using the standard bootstrap argument, we deduce the following proposition:
\begin{prop}
If we take $\mathcal{L}(0)$ and $\|\rho_{\alpha,0}\|_{L_\alpha^p(\dot B_{2,1}^{\frac{d}{2}})}$ to be sufficiently small, we have for everything $t\in [0,T]$ : $$\mathcal{L}(t)+\frac{1}{2}\int_0^t \mathcal{H}(\tau)d\tau \leq \mathcal{L}(0).$$
\end{prop}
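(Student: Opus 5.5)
The plan is a continuity (bootstrap) argument built on the inequality $\mathcal{L}(t)+\mathcal{H}(t)\leq \mathcal{L}(0)+C\mathcal{L}(t)\mathcal{H}(t)$ derived just above. That inequality holds on any interval $[0,\widetilde T]$ on which the smallness hypothesis \eqref{hypothèse de petitesse 2} is valid. Since $\mathcal{L}$ and $\mathcal{H}$ are $L^\infty_t$ and $L^1_t$ norms, both are nondecreasing in $t$. For a smooth solution, both are also continuous in $t$: the $L_t^1$ and $L_t^\infty$ norms vary continuously, and after taking $L_\alpha^{p'}$ or $L_\alpha^\infty$ norms one uses monotone convergence in $\alpha$, or the uniform regularity of the smooth solution. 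Note that the statement's $\int_0^t\mathcal{H}$ should be read as $\mathcal{H}(t)$, which already contains the time integrals; I would prove $\mathcal{L}(t)+\tfrac12\mathcal{H}(t)\le\mathcal{L}(0)$ in that sense.

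First I would fix the smallness of the data. Take $\|\rho_{\alpha,0}\|_{L_\alpha^p(\dot B_{2,1}^{d/2})}$ small enough that $(2+c_B^{-2})C\|\rho_{\alpha,0}\|\le \frac{1}{4c_B^2}$. This lets the $\rho$-coupling terms of Lemmas \ref{lemme estimée sur v multi} and \ref{w-u} be absorbed into $\mathcal{H}$. Next take $\mathcal{L}(0)\le \eta$, with $\eta$ chosen so that $4C\eta\le \tfrac12$ and so that $4c_B\cdot 2\eta\le \frac{1}{2C}$, where the last $C$ is the constant of Lemma \ref{équation de transport-multi}.

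Then define
\[
T^*:=\sup\Big\{t\in[0,T] \ : \ \mathcal{L}(t)\le 2\mathcal{L}(0) \ \text{and} \ \|w_\alpha\|_{L_\alpha^\infty L_t^1(\dot B_{2,1}^{\frac d2+1})}\le \tfrac{1}{2C}\Big\}.
\]
This set contains a neighbourhood of $0$ by continuity: at $t=0$ the $L^1_t$ norm vanishes and $\mathcal{L}(0)<2\mathcal{L}(0)$, unless the data are zero, which is trivial. On $[0,T^*]$ the hypothesis \eqref{hypothèse de petitesse 2} holds, so \eqref{estimée sur rho gronwall} and the combined inequality are available. Since $C\mathcal{L}(t)\le 2C\mathcal{L}(0)\le\tfrac12$ there, we get $\mathcal{L}(t)+\tfrac12\mathcal{H}(t)\le \mathcal{L}(0)$.

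Hence on $[0,T^*]$ we have $\mathcal{L}(t)\le \mathcal{L}(0)<2\mathcal{L}(0)$, which strictly improves the first condition. We also have
\[
\|w_\alpha\|_{L_\alpha^\infty L_t^1(\dot B_{2,1}^{\frac d2+1})}\le 4c_B\mathcal{H}(t)\le 8c_B\mathcal{L}(0)\le \tfrac{1}{2C}\cdot\tfrac12<\tfrac1{2C},
\]
which strictly improves the second condition. By continuity both strict inequalities persist slightly beyond $T^*$ when $T^*<T$, contradicting maximality. Therefore $T^*=T$, which is the claim.

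The main obstacle is justifying that the combined inequality genuinely follows from the four lemmas with the stated weights. The cross terms must be checked one by one:
\begin{itemize}
\item the term $c_B\|u\|_{L^1_t(\dot B^{d/2+1}_{2,1})}$ in Lemma \ref{w-u}, weighted by $\frac{1}{2c_B^2}$, must be dominated by the $\frac1{c_B}\|u\|_{L^1_t}$ gain from Lemma \ref{lemme estimée sur v multi};
\item the term $\|u\|_{L_t^1}$ in Lemma \ref{estimée sur u lemme multi}, weighted by $\frac1{4c_B}$ and taken for both $q=p'$ and $q=\infty$, must be dominated by the same gain;
\item the quadratic terms must all be bounded by $C\mathcal{L}\mathcal{H}$.
\end{itemize}
The last check uses $\|w_\alpha\|_{L_\alpha^\infty L^\infty_t(\dot B^{d/2+1}_{2,1})}\lesssim\mathcal{L}$, the inclusion $L^{p'}_\alpha\supset L^\infty_\alpha$ valid because $\mu$ is a probability measure, and the fact that the $L^\infty_t$ norm of $w_\alpha-u$ is controlled by $\mathcal{L}$. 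I would verify these weight inequalities explicitly, since they are the only place where the specific coefficients $\frac1{4c_B}$, $\frac1{2c_B^2}$, $\frac1{4c_B^2}$ matter; the continuity argument itself is routine.
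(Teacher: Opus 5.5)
Your proof is correct and is essentially the paper's continuity argument applied to $\mathcal{L}(t)+\mathcal{H}(t)\leq\mathcal{L}(0)+C\mathcal{L}(t)\mathcal{H}(t)$; reading $\int_0^t\mathcal{H}$ as $\mathcal{H}(t)$ is right, since that is what the paper proves. It is in fact more careful than the paper's version: you also close the smallness hypothesis $\|w_\alpha\|_{L_\alpha^\infty L_t^1(\dot B_{2,1}^{\frac d2+1})}\leq\frac{1}{2C}$ via $\|w_\alpha\|_{L_\alpha^\infty L_t^1(\dot B_{2,1}^{\frac d2+1})}\leq 4c_B\mathcal{H}(t)\leq 8c_B\mathcal{L}(0)$, whereas the paper bootstraps only $\sup\mathcal{L}\leq\gamma$ and uses the combined inequality beyond $\widetilde T$ without comment. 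Two cosmetic corrections are needed: first, the strict bound $8c_B\mathcal{L}(0)<\frac{1}{2C}$ needs e.g.\ $16c_B\eta\leq\frac{1}{2C}$ rather than $8c_B\eta\leq\frac{1}{2C}$; second, in the weight check you flagged, if the $L_\alpha^{p'}\cap L_\alpha^\infty$ norm is a sum, the cross terms $2\cdot\frac{1}{4c_B}\|u\|_{L_t^1(\dot B_{2,1}^{\frac d2+1})}$ and $\frac{1}{2c_B^2}\cdot c_B\|u\|_{L_t^1(\dot B_{2,1}^{\frac d2+1})}$ use up the entire $\frac{1}{c_B}$ gain of the Navier--Stokes estimate and leave nothing for the $\frac{1}{4c_B}$ term in $\mathcal{H}$, so read that norm as a maximum (i.e.\ as the $L_\alpha^\infty$ norm, $\mu$ being a probability measure) or lower one of the weights.
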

\begin{proof}
    Let $\gamma \in \left(0, \frac{1}{2C}\right)$. Define
\[
T_0 = \sup \left\{ T_1 \in [0, T] \ \middle| \ \sup_{t \in [0, T_1]} \mathcal{L}(t) \leq \gamma \right\}.
\]
This supremum is well-defined since the set is non-empty (as $0$ belongs to it), and since $\mathcal{L}$ is continuous, we have $T_0 > 0$. Then, at time $T_0$, we get:
\[
\begin{aligned}
\mathcal{L}(T_0) + \mathcal{H}(T_0) 
&\leq \mathcal{L}(0) + C \mathcal{L}(T_0) \mathcal{H}(T_0) \\
&\leq \mathcal{L}(0) + \alpha C \mathcal{H}(T_0) \\
&\leq \mathcal{L}(0) + \frac{1}{2} \mathcal{H}(T_0).
\end{aligned}
\]
Hence,
\[
\mathcal{L}(T_0) + \frac{1}{2} \mathcal{H}(T_0) \leq \mathcal{L}(0).
\]
Since for all $t \in [0, T_0]$, we have $\mathcal{L}(t) \leq \mathcal{L}(T_0)$ and $\mathcal{H}(t) \leq \mathcal{H}(T_0)$, the previous inequality implies:
\[
\mathcal{L}(t) < \gamma \quad \text{for all } t \in [0, T_0].
\]
By continuity of $\mathcal{L}$, this implies that $T_0 = T$, and the inequality stated in the proposition holds for all $t \in [0, T]$.

\end{proof}

We then deduce the existence part of Theorem~\ref{théorème existence et unicité multi}: the a priori estimates allow us to conclude via Friedrichs' method (see, for instance, \cite{BCD} or \cite[Section~3.3]{MonENS} for its application to system~\eqref{Euler-Navier-Stokes1}).

\section{Stability estimates and uniqueness}
\subsection{Stability estimates}
We first state a stability estimate, which will play a key role both in the proof of uniqueness in Theorem~\ref{théorème existence et unicité multi} and in establishing the convergence between systems~\eqref{Euler-Navier-Stokes-multi1} and~\eqref{Euler-Navier-Stokes1} as stated in Theorem~\ref{lien VNS et ENS}.

\begin{lemma}\label{stabilité multi}
   Let $((\rho_{\alpha,1},w_{\alpha,1})_{\alpha\in I},u_{1})$ and $((\rho_{\alpha,2},w_{\alpha,2})_{\alpha\in I},u_{2})$ be two solutions of \eqref{Euler-Navier-Stokes-multi1} with respective initial data $((\rho_{\alpha,i,0},w_{\alpha,i,0})_{\alpha\in I},u_{i,0})$ for $i\in\{1,2\}$, both belonging to the space $\widetilde{E}$ defined in \eqref{espace fonctionnel E multi}. Then the following inequalities hold for
\[
((\delta \rho_\alpha, \delta w_\alpha)_{\alpha\in I}, \delta u) \mathrel{\mathop:}= ((\rho_{\alpha,2}-\rho_{\alpha,1},w_{\alpha,2}-w_{\alpha,1})_{\alpha\in I},u_2-u_1)
\]
for all $t\in\R^+$, where $c_B$ denotes the constant defined in \eqref{constante de Bernstein}:
    \begin{multline}\label{unicité estimée rho multi}\|\delta \rho_\alpha\|_{L_\alpha^{p} L_t^\infty(\dot B_{2,1}^{\frac{d}{2}-1})} \\ \leq  \|\delta \rho_{\alpha,0}\|_{L_\alpha^{p}(\dot B_{2,1}^{\frac{d}{2}-1})}  +C\bigl(\|\delta \rho_\alpha\|_{L_\alpha^{p} L_t^\infty(\dot B_{2,1}^{\frac{d}{2}-1})}\|w_{\alpha,2}\|_{L_\alpha^{\infty}L_t^1(\dot B_{2,1}^{\frac{d}{2}+1})} \\ +\|\rho_{\alpha,1}\|_{L_\alpha^{p} L_t^\infty(\dot B_{2,1}^{\frac{d}{2}})}\|\delta w_\alpha\|_{L_\alpha^{\infty} L_t^1(\dot B_{2,1}^{\frac{d}{2}})} \bigl),
    \end{multline}
    \begin{multline}\label{unicité estimée v multi}
    \|\delta u\|_{L_t^\infty(\dot B_{2,1}^{\frac{d}{2}-1})}+ \frac{1}{c_B}\|\delta u\|_{L_t^1(\dot B_{2,1}^{\frac{d}{2}+1})} \\ \leq \|\delta u_0\|_{\dot B_{2,1}^{\frac{d}{2}-1}} +C\|\delta u\|_{L_t^\infty(\dot B_{2,1}^{\frac{d}{2}-1})}\|(u_1,u_2)\|_{L_t^1(\dot B_{2,1}^{\frac{d}{2}+1})} \\ \hfill  +C\|\delta \rho_\alpha\|_{L_\alpha^{p} L_t^\infty(\dot B_{2,1}^{\frac{d}{2}-1})}\|w_{\alpha,2}-u_2\|_{L_\alpha^{p'}L_t^1(\dot B_{2,1}^{\frac{d}{2}})} \\ +C\|\rho_{\alpha,1}\|_{L_\alpha^{p} L_t^\infty(\dot B_{2,1}^{\frac{d}{2}})}\|\delta w_\alpha-\delta u\|_{L_\alpha^{p'} L_t^1(\dot B_{2,1}^{\frac{d}{2}-1})},\end{multline}
\begin{multline}\label{unicité estimée u multi}\|\delta w_\alpha\|_{L_\alpha^{p'}L_t^\infty(\dot B_{2,1}^{\frac{d}{2}})}+\|\delta w_\alpha\|_{L_\alpha^{p'}L_t^1(\dot B_{2,1}^{\frac{d}{2}})}^h \\ \leq \|\delta w_{\alpha,0}\|_{L_\alpha^{p'}(\dot B_{2,1}^{\frac{d}{2}})}+\|\delta u\|_{L_t^1(\dot B_{2,1}^{\frac{d}{2}})}^h  +\|\delta w_\alpha-\delta u\|_{L_\alpha^{p'}L_t^1(\dot B_{2,1}^{\frac{d}{2}})}^l \\ +C \|\delta w_\alpha\|_{L_\alpha^{p'}L_t^\infty(\dot B_{2,1}^{\frac{d}{2}})}\|(w_{\alpha,1},w_{\alpha,2})\|_{L_\alpha^{\infty}L_t^1(\dot B_{2,1}^{\frac{d}{2}+1})},\end{multline}
    \begin{multline}\label{unicité estimée u-v multi}\|\delta w_\alpha-\delta u\|_{L_\alpha^{p'}L_t^\infty(\dot B_{2,1}^{\frac{d}{2}-1})}+\|\delta w_\alpha-\delta u\|_{L_\alpha^{p'}L_t^1(\dot B_{2,1}^{\frac{d}{2}-1})} \\ \leq \|\delta w_{\alpha,0}-\delta u_0\|_{L_\alpha^{p'}(\dot B_{2,1}^{\frac{d}{2}-1})} +\|\delta u\|_{L_t^1(\dot B_{2,1}^{\frac{d}{2}+1})}(c_B+C\|u_1\|_{L_t^\infty(\dot B_{2,1}^{\frac{d}{2}-1})}) \\+C(\|\delta \rho_\alpha\|_{L_\alpha^{p}L_t^\infty(\dot B_{2,1}^{\frac{d}{2}-1})}\|w_{\alpha,2}-u_2\|_{L_\alpha^{p'}L_t^1(\dot B_{2,1}^{\frac{d}{2}})} 
    \\
    +C\|\delta u\|_{L_t^\infty(\dot B_{2,1}^{\frac{d}{2}-1})}(\|w_{\alpha,2}\|_{L_\alpha^{p'}L_t^1(\dot B_{2,1}^{\frac{d}{2}+1})}+\|u_2\|_{L_t^1(\dot B_{2,1}^{\frac{d}{2}+1})}) 
    \\
    +C\|\rho_{\alpha,1}\|_{L_\alpha^{p} L_t^\infty(\dot B_{2,1}^{\frac{d}{2}})}\|\delta w_\alpha-\delta u\|_{L_\alpha^{p'}L_t^1(\dot B_{2,1}^{\frac{d}{2}-1})} 
    \\
    +C\|w_{\alpha,2}\|_{L_\alpha^{\infty}L_t^1(\dot B_{2,1}^{\frac{d}{2}+1})}\|\delta w_\alpha-\delta u\|_{L_\alpha^{p'}L_t^\infty(\dot B_{2,1}^{\frac{d}{2}-1})} \cr\hfill +\|u_1\|_{L_t^1(\dot B_{2,1}^{\frac{d}{2}+1})}\|\delta w_\alpha-\delta u\|_{L_\alpha^{p'}L_t^\infty(\dot B_{2,1}^{\frac{d}{2}-1})}  \cr\hfill +C\|w_{\alpha,1}-u_1\|_{L_\alpha^{\infty}L_t^1(\dot B_{2,1}^{\frac{d}{2}-1})} \|\delta w_\alpha\|_{L_\alpha^{p'}L_t^\infty(\dot B_{2,1}^{\frac{d}{2}})}.\end{multline}
\end{lemma}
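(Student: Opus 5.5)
We derive the equations satisfied by the differences, estimate each one for fixed $\alpha$ as in the single-phase stability lemma of \cite{MonENS}, and then integrate in $\alpha$, using Hölder's inequality to place each factor in the right $L_\alpha^q$ space. Subtracting the two systems \eqref{Euler-Navier-Stokes-multi2} gives
\[
\partial_t\delta\rho_\alpha+\dive(\delta\rho_\alpha\, w_{\alpha,2})=-\dive(\rho_{\alpha,1}\delta w_\alpha),
\]
\[
\partial_t\delta w_\alpha+(w_{\alpha,2}\cdot\nabla)\delta w_\alpha+\delta w_\alpha-\delta u=-(\delta w_\alpha\cdot\nabla)w_{\alpha,1},
\]
\[
\partial_t\delta u-\Delta\delta u=-\mathbb P\big((u_2\cdot\nabla)\delta u+(\delta u\cdot\nabla)u_1\big)+\mathbb P\int_I\big(\delta\rho_\alpha(w_{\alpha,2}-u_2)+\rho_{\alpha,1}(\delta w_\alpha-\delta u)\big)d\mu.
\]
Subtracting the last two equations gives the equation for $\delta w_\alpha-\delta u$.

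For \eqref{unicité estimée rho multi}, we apply the standard transport estimate in $\dot B_{2,1}^{\frac d2-1}$ (commutator lemma, valid since $\frac d2-1\in(-\frac d2,\frac d2+1]$) for fixed $\alpha$. This gives
\[
\|\delta\rho_\alpha(t)\|_{\dot B^{\frac d2-1}_{2,1}}\le\|\delta\rho_{\alpha,0}\|_{\dot B^{\frac d2-1}_{2,1}}+C\int_0^t\Big(\|\delta\rho_\alpha\|_{\dot B^{\frac d2-1}_{2,1}}\|w_{\alpha,2}\|_{\dot B^{\frac d2+1}_{2,1}}+\|\rho_{\alpha,1}\|_{\dot B^{\frac d2}_{2,1}}\|\delta w_\alpha\|_{\dot B^{\frac d2}_{2,1}}\Big)d\tau.
\]
The source term is handled by the product law $\dot B^{\frac d2}_{2,1}\times\dot B^{\frac d2}_{2,1}\to\dot B^{\frac d2}_{2,1}$. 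We then take the sup in time and the $L^p_\alpha$ norm, with $L_\alpha^p\cdot L_\alpha^\infty\subset L_\alpha^p$.

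For \eqref{unicité estimée v multi}, we repeat the energy method of Lemma \ref{lemme estimée sur v multi}: localize with $\dot\Delta_j$, take the $L^2$ product with $\delta u_j$, and apply Lemma \ref{lemme edo}. The convection terms are bounded via product laws by $\|\delta u\|_{\dot B^{\frac d2-1}_{2,1}}\|(u_1,u_2)\|_{\dot B^{\frac d2+1}_{2,1}}$. For the coupling terms we use two bounds:
\[
\|\delta\rho_\alpha(w_{\alpha,2}-u_2)\|_{\dot B^{\frac d2-1}_{2,1}}\lesssim\|\delta\rho_\alpha\|_{\dot B^{\frac d2-1}_{2,1}}\|w_{\alpha,2}-u_2\|_{\dot B^{\frac d2}_{2,1}},
\qquad
\|\rho_{\alpha,1}(\delta w_\alpha-\delta u)\|_{\dot B^{\frac d2-1}_{2,1}}\lesssim\|\rho_{\alpha,1}\|_{\dot B^{\frac d2}_{2,1}}\|\delta w_\alpha-\delta u\|_{\dot B^{\frac d2-1}_{2,1}}.
\]
After integrating in $\mu$, Hölder's inequality with exponents $(p,p')$ gives the stated bound.

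For \eqref{unicité estimée u multi}, we localize the $\delta w_\alpha$ equation, commute $\dot\Delta_j$ with $w_{\alpha,2}\cdot\nabla$, and use the damping. The key point is how the term $\delta u_j$ is treated. At high frequencies ($j\ge0$) we keep the damping of $\delta w_\alpha$ and put $\delta u$ in the source. At low frequencies we drop the damping and write $\delta u-\delta w_\alpha$ as a source; this produces the term $\|\delta w_\alpha-\delta u\|^l$. The term $(\delta w_\alpha\cdot\nabla)w_{\alpha,1}$ is bounded by $\|\delta w_\alpha\|_{\dot B^{\frac d2}_{2,1}}\|w_{\alpha,1}\|_{\dot B^{\frac d2+1}_{2,1}}$, and the commutator is bounded in the same way with $w_{\alpha,2}$. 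We then integrate in $\alpha$ in $L^{p'}_\alpha$ using $L^{p'}_\alpha\cdot L^\infty_\alpha$.

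Estimate \eqref{unicité estimée u-v multi} is the delicate one, and I expect it to be the main obstacle. The equation for $\delta w_\alpha-\delta u$ is damped with source
\[
-\Delta\delta u-\mathbb P\int(\dots)d\mu+\mathbb P\big((u_2\cdot\nabla)\delta u+(\delta u\cdot\nabla)u_1\big)-(w_{\alpha,2}\cdot\nabla)\delta w_\alpha-(\delta w_\alpha\cdot\nabla)w_{\alpha,1}.
\]
The term $-\Delta\delta u$ gives $c_B\|\delta u\|_{L^1(\dot B^{\frac d2+1}_{2,1})}$ by \eqref{constante de Bernstein}.

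The transport terms in $\delta w_\alpha$ must be rewritten so that only the quantities allowed by the statement appear:
\[
(w_{\alpha,2}\cdot\nabla)\delta w_\alpha=(w_{\alpha,2}\cdot\nabla)(\delta w_\alpha-\delta u)+(w_{\alpha,2}\cdot\nabla)\delta u,
\]
\[
(\delta w_\alpha\cdot\nabla)w_{\alpha,1}=(\delta w_\alpha\cdot\nabla)(w_{\alpha,1}-u_1)+(\delta w_\alpha\cdot\nabla)u_1.
\]
The split of the second term is what yields the factor $\|w_{\alpha,1}-u_1\|_{L^\infty_\alpha L^1_t(\dot B^{\frac d2-1}_{2,1})}\|\delta w_\alpha\|_{L^{p'}_\alpha L^\infty_t(\dot B^{\frac d2}_{2,1})}$, using the product $\dot B^{\frac d2}_{2,1}\times\dot B^{\frac d2-1}_{2,1}\to\dot B^{\frac d2-1}_{2,1}$. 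The remaining pieces $(\delta w_\alpha-\delta u)\cdot\nabla u_1$ and $(u_1\cdot\nabla)\delta u$ yield the $\|u_1\|_{L^1_t}$ and $\|u_1\|_{L^\infty_t}\|\delta u\|_{L^1_t}$ terms. The $\delta u$ contributions with $u_2$ and $w_{\alpha,2}$ go into the $\|\delta u\|_{L^\infty_t}\|(w_{\alpha,2},u_2)\|_{L^1_t}$ term.

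Each term's Besov indices must be checked against the product laws and its $\alpha$-integrability against Hölder; this bookkeeping is the main work. Finally, we integrate in $L^{p'}_\alpha$. The coupling integral does not depend on $\alpha$ and is handled as in the $\delta u$ estimate, since $\mu$ is a probability measure.
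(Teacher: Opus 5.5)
The gap is in the damped-mode estimate \eqref{unicité estimée u-v multi}, and it comes from how you expand the convective differences. You put the differentiated difference behind solution~2, writing $(w_{\alpha,2}\cdot\nabla)\delta w_\alpha+(\delta w_\alpha\cdot\nabla)w_{\alpha,1}$ and $(u_2\cdot\nabla)\delta u+(\delta u\cdot\nabla)u_1$. After your splitting, the source of the $\delta w_\alpha-\delta u$ equation contains
$-(w_{\alpha,2}\cdot\nabla)\delta u+\mathbb P((u_2\cdot\nabla)\delta u)=-((w_{\alpha,2}-u_2)\cdot\nabla)\delta u-(\mathrm{Id}-\mathbb P)((u_2\cdot\nabla)\delta u)$.
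You send all of this into $C\|\delta u\|_{L_t^\infty(\dot B_{2,1}^{\frac d2-1})}\|(w_{\alpha,2},u_2)\|_{L_t^1(\dot B_{2,1}^{\frac d2+1})}$.

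The $(\mathrm{Id}-\mathbb P)$ piece can be rescued. Since $\dive u_i=\dive\delta u=0$, one has $(\mathrm{Id}-\mathbb P)((u_2\cdot\nabla)\delta u)=(\mathrm{Id}-\mathbb P)((\delta u\cdot\nabla)u_2)$. The same identity turns your $(\mathrm{Id}-\mathbb P)((\delta u\cdot\nabla)u_1)$ into the $(u_1\cdot\nabla)\delta u$ term you cite.

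The term $((w_{\alpha,2}-u_2)\cdot\nabla)\delta u$ cannot be rescued. It would need $\|(a\cdot\nabla)b\|_{\dot B_{2,1}^{\frac d2-1}}\lesssim\|a\|_{\dot B_{2,1}^{\frac d2+1}}\|b\|_{\dot B_{2,1}^{\frac d2-1}}$, which is false. Take $a$ a fixed Schwartz field and $b$ a wave packet oscillating at frequency $2^N$ where $a$ does not vanish; the ratio of the two sides grows like $2^N$, because the derivative lands on $b$ and $a$ only multiplies it. Here the test function is $\delta w_{\alpha,j}-\delta u_j$, not $\delta u_j$, and $w_{\alpha,2}-u_2$ is not divergence-free. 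So neither integration by parts nor Lemma~\ref{commutateur} recovers the lost derivative.

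What the product laws actually give is $\|w_{\alpha,2}-u_2\|_{L_\alpha^{p'}L_t^\infty(\dot B_{2,1}^{\frac d2-1})}\|\delta u\|_{L_t^1(\dot B_{2,1}^{\frac d2+1})}$. This is a solution-2 coefficient in front of $\|\delta u\|_{L_t^1(\dot B_{2,1}^{\frac d2+1})}$, and it is absent from the statement. There, only $c_B+C\|u_1\|_{L_t^\infty(\dot B_{2,1}^{\frac d2-1})}$ multiplies that norm, which is exactly what the uniqueness argument absorbs using the smallness of the constructed solution.

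The fix is the paper's pairing: expand $(\delta w_\alpha\cdot\nabla)w_{\alpha,2}+(w_{\alpha,1}\cdot\nabla)\delta w_\alpha$ and $(\delta u\cdot\nabla)u_2+(u_1\cdot\nabla)\delta u$. The source then regroups into two families.
\begin{itemize}
\item Undifferentiated differences times solution~2, handled by Lemma~\ref{Produit espace de Besov}: $((\delta w_\alpha-\delta u)\cdot\nabla)w_{\alpha,2}$, $(\delta u\cdot\nabla)(w_{\alpha,2}-u_2)$ and $(\mathrm{Id}-\mathbb P)((\delta u\cdot\nabla)u_2)$.
\item Differentiated differences behind solution~1: $((w_{\alpha,1}-u_1)\cdot\nabla)\delta w_\alpha$, $(\mathrm{Id}-\mathbb P)((u_1\cdot\nabla)\delta u)$ and $(u_1\cdot\nabla)(\delta w_\alpha-\delta u)$.
\end{itemize}
The last term is not a product-law term. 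It needs Lemma~\ref{commutateur} together with $\int u_1\cdot\nabla g_j\cdot g_j\,dx=-\frac12\int\dive u_1\,|g_j|^2\,dx$, where $g=\delta w_\alpha-\delta u$. Your version needs the same treatment for $(w_{\alpha,2}\cdot\nabla)(\delta w_\alpha-\delta u)$, which your sketch leaves to ``product laws''.

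Your treatment of the other three inequalities follows the paper's route and is fine. In \eqref{unicité estimée v multi} the term $(u_2\cdot\nabla)\delta u$ is also not a product-law term, but there you test against $\delta u_j$ and $\dive u_2=0$, so the commutator argument works.

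Finally, a minor point about the last term of the statement. Your product $\dot B_{2,1}^{\frac d2}\times\dot B_{2,1}^{\frac d2-1}\to\dot B_{2,1}^{\frac d2-1}$ applied to $(\delta w_\alpha\cdot\nabla)(w_{\alpha,1}-u_1)$ puts $\nabla(w_{\alpha,1}-u_1)$ in $\dot B_{2,1}^{\frac d2-1}$. It therefore yields $\|w_{\alpha,1}-u_1\|_{\dot B_{2,1}^{\frac d2}}$, not $\dot B_{2,1}^{\frac d2-1}$. The paper's own bullet estimate for $((w_{\alpha,1}-u_1)\cdot\nabla)\delta w_\alpha$ also produces the $\dot B_{2,1}^{\frac d2}$ norm. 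The index in that term of the statement therefore looks like a slip rather than something either argument delivers.
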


The proof of this lemma follows closely the arguments of \cite[Lemma 3.7]{MonENS}. The integration in the parameter $\alpha$ does not introduce any additional difficulty and can be handled using similar techniques as those employed in the proofs of Lemmas~\ref{équation de transport-multi}, \ref{lemme estimée sur v multi}, \ref{estimée sur u lemme multi}, and~\ref{w-u}.

For completeness, we provide the proof of~\eqref{unicité estimée u multi} and \eqref{unicité estimée u-v multi}  as representative examples.
\begin{proof}
We begin with the proof of estimate \eqref{unicité estimée u multi}. 

    The equation verified by $\delta w_\alpha$ is:
    \begin{equation}\label{equation1}\partial_t \delta w_\alpha+(\delta w_\alpha\cdot \nabla)w_{\alpha,2}+(w_{\alpha,1}\cdot \nabla)\delta w_\alpha+\delta w_\alpha-\delta u=0.\end{equation}
By applying the localisation operator $\dot\Delta_j$ and taking the scalar product with $\delta w_{\alpha,j}$, we obtain:
$$\displaylines{\frac{1}{2}\frac{d}{dt}\|\delta w_{\alpha,j}\|_{L^2}^2+\|\delta w_{\alpha,j}\|_{L^2}^2=\int_{\R^d}\delta u_j\cdot\delta w_{\alpha,j}dx-\int_{\R^d}\dot\Delta_j(\delta w_\alpha\cdot \nabla)w_{\alpha,2}\cdot \delta w_{\alpha,j}dx \hfill\cr\hfill-\int_{\R^d}(w_{\alpha,1}\cdot \nabla)\delta w_{\alpha,j}\cdot\delta w_{\alpha,j}dx+\int_{\R^d}[\dot\Delta_j, \delta w_\alpha\cdot\nabla
]\delta w_\alpha\cdot \delta w_{\alpha,j}dx.}$$

Now, by integration by parts and Hölder's inequality, we have:
$$\displaylines{-\int_{\R^d}(w_{\alpha,1}\cdot \nabla)\delta w_{\alpha,j}\cdot\delta w_{\alpha,j}dx=\frac{1}{2}\int_{\R^d}\dive(w_{\alpha,1})|\delta w_{\alpha,j}|^2dx \hfill\cr\hfill \leq \frac{1}{2}\|\dive(w_{\alpha,1})\|_{L^\infty}\|w_{\alpha,j}\|_{L^2}^2.}$$

We then have by the Cauchy-Schwarz inequality:
$$\displaylines{\frac{1}{2}\frac{d}{dt}\|\delta w_{\alpha,j}\|_{L^2}^2+\|\delta w_{\alpha,j}\|_{L^2}^2\leq \bigg(\|\delta u_{j}\|_{L^2}+ \|\dot\Delta_j(\delta w_\alpha\cdot \nabla)w_{\alpha,2}\|_{L^2} \hfill\cr\hfill +\frac{1}{2}\|\dive(w_{\alpha,1})\|_{L^\infty}\|w_{\alpha,j}\|_{L^2}+\|[\dot\Delta_j, \delta w_\alpha\cdot\nabla
]\delta w_\alpha\|_{L^2}\bigg)\cr\hfill\times\|\delta w_{\alpha,j}\|_{L^2}.}$$

By Lemma \ref{lemme edo}, multiplying by $2^{j\frac{d}{2}}$ and summing over $j\in \Z$, we deduce:
$$\displaylines{\|\delta w_\alpha(t)\|_{\dot B_{2,1}^{\frac{d}{2}}}+\int_0^t \|\delta w_\alpha\|_{\dot B_{2,1}^{\frac{d}{2}}} d\tau\leq \|\delta w_{\alpha,0}\|_{\dot B_{2,1}^{\frac{d}{2}}}+\int_0^t \|\delta u\|_{\dot B_{2,1}^{\frac{d}{2}}} d\tau \hfill\cr\hfill+ \int_0^t \|(\delta w_\alpha\cdot\nabla)w_{\alpha,2}\|_{\dot B_{2,1}^{\frac{d}{2}}} d\tau \cr\hfill+\frac{1}{2}\int_0^t \|\dive(w_{\alpha,1})\|_{L^\infty}\|w_{\alpha,j}\|_{L^2} d\tau \cr\hfill+ \int_0^t \sum_{j\in\Z}2^{j\frac{d}{2}}\|[\dot\Delta_j, \delta w_\alpha\cdot\nabla
]\delta w_\alpha\|_{L^2} d\tau.}$$

By the critical injection $\dot B_{2,1}^{\frac{d}{2}}\hookrightarrow L^\infty$, the fact that $\dot B_{2,1}^{\frac{d}{2}}$ is a multiplicative algebra, Lemma \ref{commutateur} on commutator estimates and by Hölder's inequality in time, we deduce:\begin{multline*}\|\delta w_\alpha\|_{L_t^\infty(\dot B_{2,1}^{\frac{d}{2}})}+\|\delta w_\alpha\|_{L_t^1(\dot B_{2,1}^{\frac{d}{2}})}^h \\ \leq \|\delta w_{\alpha,0}\|_{\dot B_{2,1}^{\frac{d}{2}}}+\|\delta u\|_{L_t^1(\dot B_{2,1}^{\frac{d}{2}})}^h  +\|\delta w_\alpha-\delta u\|_{L_t^1(\dot B_{2,1}^{\frac{d}{2}})}^l \\ +C \|\delta w_\alpha\|_{L_t^\infty(\dot B_{2,1}^{\frac{d}{2}})}\|(w_{\alpha,1},w_{\alpha,2})\|_{L_t^1(\dot B_{2,1}^{\frac{d}{2}+1})}.\end{multline*} 

Integration in $\alpha$ then gives us \eqref{unicité estimée u multi}.

~ 

Let us continue with the proof of estimate \eqref{unicité estimée u-v multi}. The equation verified by $\delta u$ is:
$$\displaylines{\partial_t \delta u+\mathbb{P}((\delta u\cdot\nabla)u_{2}+\mathbb{P}((u_1\cdot\nabla)\delta u)\hfill\cr\hfill=\Delta \delta u+\mathbb{P}\int(\delta \rho_\alpha(w_{\alpha,2}-u_2))d\mu+\mathbb{P}\int(\rho_{\alpha,1}(\delta w_\alpha-\delta u)d\mu.}$$
By taking the difference between this equation and \eqref{equation1}, we obtain the following equation:
\begin{multline}\label{equation2}
\partial_t (\delta w_\alpha-\delta u)+\delta w_\alpha-\delta u=-\Delta \delta u-\mathbb{P}\int \delta \rho_{\alpha'}(w_{\alpha',2}-u_2)d\mu(\alpha')-\hfill\cr\hfill\mathbb{P}\int\left(\rho_{\alpha',1}(\delta w_{\alpha'}-\delta u)\right)d\mu(\alpha')-\mathbb{P}\left((\delta w_{\alpha}-\delta u)\cdot \nabla\right) w_{\alpha,2}\\ -(\delta u\cdot \nabla)(w_{\alpha,2}-u_2)-(Id-\mathbb{P})\left((\delta u\cdot \nabla)u_2\right) \\ -\left((w_{\alpha,1}-u_1)\cdot \nabla\right)\delta w_\alpha - (u_1\cdot \nabla)(\delta w_\alpha-\delta u)-(Id-\mathbb{P})\left((u_1\cdot \nabla)\delta u\right).\end{multline}

We now estimate the nonlinear terms in the space $\dot B_{2,1}^{\frac{d}{2}-1}$ by means of the product laws of Lemma~\ref{Produit espace de Besov}, except for the term $(u_1\cdot \nabla)(\delta w_\alpha-\delta u)$, which will be handled separately.

\begin{itemize}
 \item[$\bullet$]$\|\mathbb{P}\left((\delta w_\alpha-\delta u)\cdot \nabla\right) w_{\alpha,2}\|_{\dot B_{2,1}^{\frac{d}{2}-1}}\lesssim \|(\delta w_\alpha-\delta u)\cdot \nabla w_{\alpha,2}\|_{\dot B_{2,1}^{\frac{d}{2}-1}}\\ \lesssim \|\delta w_\alpha-\delta u\|_{\dot B_{2,1}^{\frac{d}{2}-1}}\| \nabla w_{\alpha,2}\|_{\dot B_{2,1}^{\frac{d}{2}}}  \lesssim \|\delta w_\alpha-\delta u\|_{\dot B_{2,1}^{\frac{d}{2}-1}}\|w_{\alpha,2}\|_{\dot B_{2,1}^{\frac{d}{2}+1}}$
    \item[$\bullet$] $\|(\delta u\cdot \nabla)(w_{\alpha,2}-u_2)\|_{\dot B_{2,1}^{\frac{d}{2}-1}}\lesssim \|\delta u\|_{\dot B_{2,1}^{\frac{d}{2}-1}}\|\nabla(w_{\alpha,2}-u_2)\|_{\dot B_{2,1}^\frac{d}{2}} \\ \lesssim\|\delta u\|_{\dot B_{2,1}^{\frac{d}{2}-1}}\|(w_{\alpha,2},u_2)\|_{\dot B_{2,1}^{\frac{d}{2}+1}} $
    \item[$\bullet$] Similarly to the previous term, we have: $$\displaylines{\|(Id-\mathbb{P})\left((\delta u\cdot \nabla)u_2\right) \|_{\dot B_{2,1}^{\frac{d}{2}-1}}\lesssim\|(\delta u\cdot \nabla)u_2 \|_{\dot B_{2,1}^{\frac{d}{2}-1}} \lesssim \|\delta u\|_{\dot B_{2,1}^{\frac{d}{2}-1}}\|u_2\|_{\dot B_{2,1}^{\frac{d}{2}+1}} .}$$
    \item[$\bullet$] $\|\left((w_{\alpha,1}-u_1)\cdot \nabla\right)\delta w_\alpha\|_{\dot B_{2,1}^{\frac{d}{2}-1}}\lesssim \|w_{\alpha,1}-u_1\|_{\dot B_{2,1}^{\frac{d}{2}}}\|\nabla\delta w_\alpha\|_{\dot B_{2,1}^{\frac{d}{2}-1}} \\\lesssim \|w_{\alpha,1}-u_1\|_{\dot B_{2,1}^{\frac{d}{2}}}\|\delta w_\alpha\|_{\dot B_{2,1}^{\frac{d}{2}}}$.
    \item[$\bullet$] $\|(Id-\mathbb{P})\left((u_1\cdot \nabla)\delta u\right)\|_{\dot B_{2,1}^{\frac{d}{2}-1}}\lesssim \|\left((u_1\cdot \nabla)\delta u\right)\|_{\dot B_{2,1}^{\frac{d}{2}-1}} \\ \lesssim \|u_1\|_{\dot B_{2,1}^{\frac{d}{2}-1}}\|\nabla \delta u\|_{\dot B_{2,1}^{\frac{d}{2}}}\lesssim\|u_1\|_{\dot B_{2,1}^{\frac{d}{2}-1}}\|\delta u\|_{\dot B_{2,1}^{\frac{d}{2}+1}}$.
\end{itemize}

For the terms with an integral in $\alpha$, by the product laws of Lemma~\ref{Produit espace de Besov} and Hölder inequality, there exists a constant $C>0$ and a subsequence $(c_j)_{j\in\Z}$ such that $\sum_{j\in\Z}c_j\leq 1$:
$$\displaylines{\left|\int_{\R^d}\dot\Delta_j\mathbb{P}\int \delta \rho_{\alpha'}(w_{\alpha',2}-u_2) d\mu(\alpha')\cdot (\delta w_{\alpha,j}-\delta u_j)dx\right|
\hfill\cr\hfill = \left|\int\int_{\R^d}\dot\Delta_j\mathbb{P} (\delta \rho_{\alpha'}(w_{\alpha',2}-u_2))\cdot (\delta w_{\alpha,j}-\delta u_j)dxd\mu(\alpha')\right| \cr\hfill\leq \int_I\|\dot\Delta_j(\delta \rho_\alpha(w_{\alpha',2}-u_2))\|_{L^2}d\mu(\alpha')\|\delta w_{\alpha',j}-\delta u_j\|_{L^2} \cr\hfill \leq C c_j 2^{-j\left(\frac{d}{2}-1\right)}\int\|\delta \rho_\alpha(w_{\alpha',2}-u_2)\|_{\dot B_{2,1}^{\frac{d}{2}-1}}  d\mu(\alpha') \|\delta w_{\alpha',j}-\delta u_j\|_{L^2} \cr\hfill  \leq C c_j 2^{-j\left(\frac{d}{2}-1\right)}\int\|\delta \rho_\alpha\|_{\dot B_{2,1}^{\frac{d}{2}-1}} \|(w_{\alpha',2}-u_2)\|_{\dot B_{2,1}^{\frac{d}{2}}} d\mu(\alpha') \|\delta w_{\alpha',j}-\delta u_j\|_{L^2},}$$
and also 
$$\displaylines{\left|\int_{\R^d}\dot\Delta_j\mathbb{P}\int \delta (\rho_{\alpha',1}(\delta w_{\alpha'}-\delta u)) d\mu(\alpha')\cdot (\delta w_{\alpha,j}-\delta u_j)dx\right|
\hfill\cr\hfill = \left|\int\int_{\R^d}\dot\Delta_j\mathbb{P} (\rho_{\alpha',1}(\delta w_{\alpha'}-\delta u))\cdot (\delta w_{\alpha,j}-\delta u_j)dxd\mu(\alpha')\right| \cr\hfill\leq \int_I\|\dot\Delta_j(\rho_{\alpha',1}(\delta w_{\alpha'}-\delta u))\|_{L^2}d\mu(\alpha')\|\delta w_{\alpha',j}-\delta u_j\|_{L^2} \cr\hfill \leq C c_j 2^{-j\left(\frac{d}{2}-1\right)}\int\|(\rho_{\alpha',1}(\delta w_{\alpha'}-\delta u))\|_{\dot B_{2,1}^{\frac{d}{2}-1}}  d\mu(\alpha') \|\delta w_{\alpha',j}-\delta u_j\|_{L^2} \cr\hfill  \leq C c_j 2^{-j\left(\frac{d}{2}-1\right)}\int\|\rho_{\alpha',1}\|_{\dot B_{2,1}^{\frac{d}{2}}} \|\delta w_{\alpha'}-\delta u\|_{\dot B_{2,1}^{\frac{d}{2}-1}} d\mu(\alpha') \|\delta w_{\alpha',j}-\delta u_j\|_{L^2}.}$$

Let now handle the term $\dot\Delta_j(u_1\cdot \nabla)(\delta w_\alpha-\delta u)$: 
$$\displaylines{\int_{\R^d}\dot\Delta_j\left(u_1\cdot\nabla(\delta w_\alpha-\delta u)\right)\cdot (\delta w_{\alpha,j}-\delta u_j)dx \hfill\cr= \int_{\R^d}u_1\cdot\nabla(\delta w_{\alpha,j}-\delta u_j)\cdot (\delta w_{\alpha,j}-\delta u_j)dx \hfill\cr\hfill+\int_{\R^d}[\dot\Delta_j,u_1\cdot\nabla](\delta w_\alpha-\delta u)\cdot (\delta w_{\alpha,j}-\delta u_j)dx.}$$

Firstly, by integration by parts, we obtain:
$$\int_{\R^d}u_1\cdot\nabla(\delta w_{\alpha,j}-\delta u_j)\cdot (\delta w_{\alpha,j}-\delta u_j)dx=-\frac{1}{2}\int_{\R^d}\dive(u_1)|\delta w_{\alpha,j}-\delta u_j|^2dx.$$

By the critical injection of $\dot B_{2,1}^{\frac{d}{2}}$ in $L^\infty$, we deduce:
$$\displaylines{\left|\int_{\R^d}u_1\cdot\nabla(\delta w_{\alpha,j}-\delta u_j)\cdot (\delta w_{\alpha,j}-\delta u_j)dx\right|\leq \|\dive(u_1)\|_{L^\infty}\|\delta w_{\alpha,j}-\delta u_j\|_{L^2}^2 \hfill\cr\hfill \leq \|u_1\|_{\dot B_{2,1}^{\frac{d}{2}+1}}\|\delta w_{\alpha,j}-\delta u_j\|_{L^2}^2.}$$

Secondly, by the commutator Lemma \ref{commutateur} and product laws, there exists a constant $C>0$ and a subsequence $(c_j)_{j\in\Z}$ such that $\sum_{j\in\Z}c_j\leq 1$ :
$$\displaylines{\left|\int_{\R^d}[\dot\Delta_j,u_1\cdot\nabla](\delta w_\alpha-\delta u)\cdot (\delta w_{\alpha,j}-\delta u_j)dx\right|\hfill\cr\hfill \leq\|\dot\Delta_j(u_1\cdot\nabla(\delta w_\alpha-\delta u)))\|_{L^2}\|\delta w_{\alpha,j}-\delta u_j\|_{L^2}   \hfill\cr\hfill\leq C c_j 2^{-j(\frac{d}{2}-1)} \|u_1\|_{\dot B_{2,1}^{\frac{d}{2}+1}}\|\delta w_\alpha-\delta u\|_{\dot B_{2,1}^{\frac{d}{2}-1}}\|\delta w_{\alpha,j}-\delta u_j\|_{L^2}.}$$

Finally, we deduce the following inequality on the term $(u_1\cdot \nabla)(\delta w_\alpha-\delta u)$:
$$\displaylines{\left|\int_{\R^d}\dot\Delta_j\left(u_1\cdot\nabla(\delta w_\alpha-\delta u)\right)\cdot (\delta w_{\alpha,j}-\delta u_j)dx\right| \hfill\cr\hfill\leq C c_j 2^{-j(\frac{d}{2}-1)} \|u_1\|_{\dot B_{2,1}^{\frac{d}{2}+1}}\|\delta w_\alpha-\delta u\|_{\dot B_{2,1}^{\frac{d}{2}-1}}\|\delta w_{\alpha,j}-\delta u_j\|_{L^2}.}$$

Applying $\dot\Delta_j$ to \eqref{equation2}, using the previous estimates together with the Cauchy--Schwarz inequality and Lemma~\ref{lemme edo}, multiplying by $2^{j(\frac d2-1)}$, and summing over $j\in\mathbb Z$, we infer that

$$\displaylines{\|(\delta w_\alpha-\delta u)(t)\|_{\dot B_{2,1}^{\frac{d}{2}-1}}+\int_0^t\|\delta w_\alpha-\delta u\|_{\dot B_{2,1}^{\frac{d}{2}-1}}d\tau \hfill\cr\hfill \leq \|\delta w_{\alpha,0}-\delta u_0\|_{\dot B_{2,1}^{\frac{d}{2}-1}}+\int_0^t \|\delta u\|_{\dot B_{2,1}^{\frac{d}{2}+1}}(c_B+C\|u_1\|_{\dot B_{2,1}^{\frac{d}{2}-1}})d\tau \cr\hfill+C\int_0^t\int \|\delta \rho_{\alpha'}\|_{\dot B_{2,1}^{\frac{d}{2}-1}}\|w_{\alpha',2}-u_2\|_{\dot B_{2,1}^{\frac{d}{2}}}d\mu(\alpha')d\tau 
    \cr\hfill
    +C\int_0^t\|\delta u\|_{\dot B_{2,1}^{\frac{d}{2}-1}}(\|w_{\alpha,2}\|_{\dot B_{2,1}^{\frac{d}{2}+1}}+\|u_2\|_{\dot B_{2,1}^{\frac{d}{2}+1}})d\tau 
    \cr\hfill
    +C\int_0^t \int\|\rho_{\alpha',1}\|_{\dot B_{2,1}^{\frac{d}{2}}}\|\delta w_{\alpha'}-\delta u\|_{\dot B_{2,1}^{\frac{d}{2}-1}}d\mu(\alpha') d\tau
    \cr\hfill
    +C\int_0^t\|w_{\alpha,2}\|_{\dot B_{2,1}^{\frac{d}{2}+1}}\|\delta w_\alpha-\delta u\|_{\dot B_{2,1}^{\frac{d}{2}-1}}d\tau \cr\hfill +C\int_0^t\|u_1\|_{\dot B_{2,1}^{\frac{d}{2}+1}}\|\delta w_\alpha-\delta u\|_{\dot B_{2,1}^{\frac{d}{2}-1}}d\tau  \cr\hfill +C\int_0^t \|w_{\alpha,1}-u_1\|_{\dot B_{2,1}^{\frac{d}{2}-1}} \|\delta w_\alpha\|_{\dot B_{2,1}^{\frac{d}{2}}}d\tau.}$$

    Exchanging the order of integration and applying Hölder's inequality successively in the spatial variable and in $\alpha'$, we deduce \eqref{estimée sur u-v multi} after integration with respect to $\alpha'$ and a final application of Hölder's inequality in $\alpha$.
\end{proof}
\subsection{Uniqueness}
Let $((\rho_{\alpha,1}, w_{\alpha,1})_{\alpha \in I}, u_1)$ and $((\rho_{\alpha,2}, w_{\alpha,2})_{\alpha \in I}, u_2)$ be two solutions of \eqref{Euler-Navier-Stokes-multi1} with the same initial data $((\rho_{\alpha,0}, w_{\alpha,0})_{\alpha \in I}, u_0)$ and belonging to the functional space $\widetilde{E}$ defined in \eqref{espace fonctionnel E multi}. 

We note that $((\rho_{\alpha,1}, w_{\alpha,1})_{\alpha \in I}, u_1)$ is the solution constructed in the existence part of Theorem~\ref{théorème existence et unicité multi}.

\subsubsection{Estimate on the density}

From inequality \eqref{unicité estimée rho multi}, we have:
\[
\begin{aligned}
\|\delta \rho_\alpha\|_{L_\alpha^p L_T^\infty(\dot{B}_{2,1}^{\frac{d}{2}-1})}
&\leq C \|\delta \rho_\alpha\|_{L_\alpha^p L_T^\infty(\dot{B}_{2,1}^{\frac{d}{2}-1})} \cdot T \cdot \|w_{\alpha,2}\|_{L_\alpha^\infty L_T^\infty(\dot{B}_{2,1}^{\frac{d}{2}+1})} \\
&\quad + C \|\rho_{\alpha,1}\|_{L_\alpha^p L_T^\infty(\dot{B}_{2,1}^{\frac{d}{2}-1})} \cdot \|\delta w_\alpha\|_{L_\alpha^\infty L_T^1(\dot{B}_{2,1}^{\frac{d}{2}})}.
\end{aligned}
\]

Choosing $T$ sufficiently small so that
\begin{equation}\label{condition sur T}
C T \|w_{\alpha,2}\|_{L_\alpha^\infty L_T^\infty(\dot{B}_{2,1}^{\frac{d}{2}+1})} \leq \frac{1}{2},
\end{equation}
we can absorb the first term on the right-hand side into the left-hand side, yielding:
\begin{equation}\label{dernière estimée rho}
\|\delta \rho_\alpha\|_{L_\alpha^p L_T^\infty(\dot{B}_{2,1}^{\frac{d}{2}-1})}
\leq 2 C \|\rho_{\alpha,1}\|_{L_\alpha^p L_T^\infty(\dot{B}_{2,1}^{\frac{d}{2}-1})} \cdot \|\delta w_\alpha\|_{L_\alpha^\infty L_T^1(\dot{B}_{2,1}^{\frac{d}{2}})}.
\end{equation}

\subsubsection{Estimate on the velocity of the Euler equation}
By \eqref{unicité estimée u multi}, we deduce:
$$\displaylines{\|\delta w_\alpha\|_{L_\alpha^{p'}L_T^\infty(\dot B_{2,1}^{\frac{d}{2}})}+ \|\delta w_\alpha\|_{L_\alpha^{p'}L_T^1(\dot B_{2,1}^{\frac{d}{2}})}^h \\ \leq \|\delta u\|_{L_T^1(\dot B_{2,1}^{\frac{d}{2}})}^h  +\|\delta w_\alpha-\delta u\|_{L_\alpha^{p'}L_T^1(\dot B_{2,1}^{\frac{d}{2}})}^l \hfill\cr\hfill +C \|\delta w_\alpha\|_{L_\alpha^{p'}L_T^\infty(\dot B_{2,1}^{\frac{d}{2}})}\|w_{\alpha,1}\|_{L_\alpha^{\infty}L_T^1(\dot B_{2,1}^{\frac{d}{2}+1})} \cr\hfill +C \|\delta w_\alpha\|_{L_\alpha^{p'}L_T^\infty(\dot B_{2,1}^{\frac{d}{2}})}\times T \times\|w_{\alpha,2}\|_{L_\alpha^{\infty}L_T^\infty(\dot B_{2,1}^{\frac{d}{2}+1})}.}$$

By the condition \eqref{condition sur T} and by smallness of $\|w_{\alpha,1}\|_{L_\alpha^{\infty}L_T^1(\dot B_{2,1}^{\frac{d}{2}+1})}$ (smaller than $\frac{1}{2}$), we deduce: 

\begin{multline}\label{dernière estimée w}
    \|\delta w_\alpha\|_{L_\alpha^{p'}L_T^\infty(\dot B_{2,1}^{\frac{d}{2}})}+ \|\delta w_\alpha\|_{L_\alpha^{p'}L_T^1(\dot B_{2,1}^{\frac{d}{2}})}^h \\ \leq 4\|\delta u\|_{L_T^1(\dot B_{2,1}^{\frac{d}{2}})}^h  +4\|\delta w_\alpha-\delta u\|_{L_\alpha^{p'}L_T^1(\dot B_{2,1}^{\frac{d}{2}})}^l.
\end{multline}
\subsubsection{Estimate on the velocity of the Navier-Stokes equation}
To return to the case of small data, let us define:
\begin{equation}\label{définition uL}
    u_L\mathrel{\mathop:}=e^{t\Delta}u_0, \quad \widetilde{u}_i\mathrel{\mathop:}=u_i-u_L \quad \text{for}\ i\in\{1;2\}.
\end{equation}
In particular, we have:
\begin{equation}\label{estimée sur uL1}
    \|u_L\|_{L^\infty(\dot B_{2,1}^{\frac{d}{2}-1})}\leq C \|u_0\|_{\dot B_{2,1}^{\frac{d}{2}-1}},
\end{equation}
and also
\begin{equation}\label{estimée sur uL2}
    \|u_L\|_{L_T^r(\dot B_{2,1}^{\frac{d}{2}+1})} \underset{T\to 0}{\longrightarrow}0 \quad \forall r\in [1,\infty[.
\end{equation}

\begin{lemma}\label{estimée tilde u}
    We have the following information:
    $$\|\widetilde{u}_i\|_{L_T^\infty(\dot B_{2,1}^{\frac{d}{2}-1})\cap L_T^1(\dot B_{2,1}^{\frac{d}{2}+1})} \underset{T\to 0}{\longrightarrow}0 \quad \forall i\in\{1;2\}.$$
\end{lemma}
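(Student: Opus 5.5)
The plan is to write the equation satisfied by $\widetilde u_i=u_i-u_L$ and run the Navier--Stokes estimate of Lemma \ref{lemme estimée sur v multi} on it. Since $u_L$ solves the free heat equation with datum $u_0$, the function $\widetilde u_i$ satisfies
\[
\partial_t\widetilde u_i-\Delta\widetilde u_i=-\mathbb P\big((u_i\cdot\nabla)u_i\big)+\mathbb P\int_I\rho_{\alpha,i}(w_{\alpha,i}-u_i)\,d\mu,\qquad \widetilde u_i(0)=0 .
\]
Localizing with $\dot\Delta_j$, taking the $L^2$ product with $\dot\Delta_j\widetilde u_i$, using Lemma \ref{lemme edo}, multiplying by $2^{j(\frac d2-1)}$ and summing over $j$ (exactly as in Lemma \ref{lemme estimée sur v multi}), I get
\[
\|\widetilde u_i\|_{L_T^\infty(\dot B_{2,1}^{\frac d2-1})}+\tfrac1{c_B}\|\widetilde u_i\|_{L_T^1(\dot B_{2,1}^{\frac d2+1})}\le C\|u_i\|_{L_T^\infty(\dot B_{2,1}^{\frac d2-1})}\|u_i\|_{L_T^1(\dot B_{2,1}^{\frac d2+1})}+C\|\rho_{\alpha,i}\|_{L_\alpha^pL_T^\infty(\dot B_{2,1}^{\frac d2})}\|w_{\alpha,i}-u_i\|_{L_\alpha^{p'}L_T^1(\dot B_{2,1}^{\frac d2-1})}.
\]
The initial datum vanishes, so there is no $\|u_0\|$ term. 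This is why I would work with $\widetilde u_i$ rather than with $u_i$ itself.

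The next step is to show that every factor on the right-hand side carrying an $L^1_T$ norm tends to $0$ as $T\to0$, while the $L^\infty_T$ factors stay bounded. The $L_T^\infty$ factors are bounded because $(\rho_{\alpha,i},w_{\alpha,i},u_i)\in\widetilde E$.

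For the nonlinear Navier--Stokes term, $u_i\in L^1(\R^+;\dot B_{2,1}^{\frac d2+1})$, so $\|u_i\|_{L^1_T(\dot B_{2,1}^{\frac d2+1})}\to0$ by dominated convergence (absolute continuity of the integral). An alternative is to split $u_i=u_L+\widetilde u_i$, use \eqref{estimée sur uL2}, and absorb the quadratic $\widetilde u_i$ contributions once they are small.

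For the coupling term I have two options. The first is to bound $\|w_{\alpha,i}-u_i\|_{L_\alpha^{p'}L_T^1(\dot B_{2,1}^{\frac d2-1})}\le T\,\|w_{\alpha,i}-u_i\|_{L_\alpha^{p'}L_T^\infty(\dot B_{2,1}^{\frac d2-1})}$, which is $O(T)$ provided this $L_\alpha^{p'}L^\infty$ norm is finite. The second is dominated convergence in $\alpha$: for each $\alpha$ the integrand $\|w_{\alpha,i}-u_i\|_{L_T^1}$ decreases to $0$ as $T\to0$, and it is dominated by the $T=\infty$ value, which lies in $L_\alpha^{p'}$. The second option requires $p'<\infty$. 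When $p'=\infty$ I would fall back on the first option.

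The main obstacle is making sure the memberships in $\widetilde E$ really provide the $\alpha$-integrated bounds used above. This concerns $\rho_{\alpha,i}\in L_\alpha^pL_T^\infty(\dot B_{2,1}^{\frac d2})$ and $w_{\alpha,i},u_i\in L_\alpha^{p'}L_T^\infty(\dot B_{2,1}^{\frac d2-1})$, and it matters especially for the second solution, which is not assumed small. I would interpret $\widetilde E$ as including these mixed norms; they are finite, for instance, via the $L^\infty_\alpha$ and $L^{p'}_\alpha$ controls appearing in $\mathcal Z$, and $\mu$ is a probability measure. With these in hand, the $T$-factor option settles the coupling term for every $p$. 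Combining the three points shows that the right-hand side tends to $0$, which gives the lemma.
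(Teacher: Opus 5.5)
Your proof is correct. It differs from the paper's only in how the convection term is treated.

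\textbf{The paper's route.} The paper splits $(u\cdot\nabla)u$ into $\widetilde u\cdot\nabla u+u_L\cdot\nabla\widetilde u+u_L\cdot\nabla u_L$. It controls the middle piece by interpolation with a parameter $\beta$ chosen so that $\beta C\|u_0\|_{\dot B_{2,1}^{\frac d2-1}}\le\frac12$, and closes with Gr\"onwall's lemma. In that argument the only small quantities are $\|u_L\|_{L_T^1(\dot B_{2,1}^{\frac d2+1})}$ and $T$, while $\|u\|_{L^1(\dot B_{2,1}^{\frac d2+1})}$ enters only through the exponential factor.

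\textbf{Your route.} You keep $(u_i\cdot\nabla)u_i$ as a pure source bounded by $\|u_i\|_{L_T^\infty(\dot B_{2,1}^{\frac d2-1})}\|u_i\|_{L_T^1(\dot B_{2,1}^{\frac d2+1})}$. You then use absolute continuity of the integral, which is legitimate because $u_i\in L^1(\R^+;\dot B_{2,1}^{\frac d2+1})$ is part of membership in $\widetilde E$. This removes both the interpolation and the Gr\"onwall step. It uses no more information than the paper, whose Gr\"onwall bound also needs $\|u\|_{L^1(\dot B_{2,1}^{\frac d2+1})}<\infty$.

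\textbf{The coupling term.} You handle it exactly as the paper does: a factor $T$ times an $L_\alpha^{p'}L_T^\infty(\dot B_{2,1}^{\frac d2-1})$ norm. The paper makes the same implicit reading of $\widetilde E$ that you flag, namely finiteness of $\|\rho_\alpha\|_{L_\alpha^pL^\infty(\dot B_{2,1}^{\frac d2})}$ and $\|w_\alpha-u\|_{L_\alpha^{p'}L^\infty(\dot B_{2,1}^{\frac d2-1})}$ for both solutions. So you are right to make that assumption explicit.

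\textbf{A shortcut for both arguments.} The statement follows with no estimate at all. Both $u_i$ and $u_L$ belong to $\mathcal C(\R^+;\dot B_{2,1}^{\frac d2-1})\cap L^1(\R^+;\dot B_{2,1}^{\frac d2+1})$, and $\widetilde u_i(0)=u_0-u_0=0$. Continuity at $t=0$ then gives the $L_T^\infty$ part, and absolute continuity of the integral gives the $L_T^1$ part.
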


\begin{proof}
To simplify notation, we omit the subscript \(i\). The function \(\widetilde{u}\) satisfies:
\[
\partial_t \widetilde{u}-\Delta \widetilde{u}
=-\mathbb{P}(\widetilde{u}\cdot \nabla u)
-\mathbb{P}(u_L\cdot \nabla\widetilde{u})
-\mathbb{P}(u_L\cdot\nabla u_L)
+\mathbb{P}\int_I \rho_\alpha(w_\alpha-u)\,d\mu.
\]

Arguing as in the proof of Lemma~\ref{lemme estimée sur v multi}, we obtain, for every \(T>0\),
$$\displaylines{\|\widetilde{u}\|_{L_T^\infty(\dot B_{2,1}^{\frac{d}{2}-1})\cap L_T^1(\dot B_{2,1}^{\frac{d}{2}+1})}\lesssim \int_0^T \|\widetilde{u}\|_{\dot B_{2,1}^{\frac{d}{2}-1}}\|u\|_{\dot B_{2,1}^{\frac{d}{2}+1}}d\tau+\|u_L\|_{L_T^2(\dot B_{2,1}^{\frac{d}{2}})}\|\widetilde{u}\|_{L_T^2(\dot B_{2,1}^{\frac{d}{2}})} \hfill\cr\hfill +\|u_L\|_{L_T^\infty(\dot B_{2,1}^{\frac{d}{2}-1})}\|u_L\|_{L_T^1(\dot B_{2,1}^{\frac{d}{2}+1})} \cr\hfill +\|\rho_\alpha\|_{L_\alpha^p L_T^\infty(\dot B_{2,1}^{\frac{d}{2}})}\|w_\alpha-u\|_{L_{\alpha}^{p'} L_T^1(\dot B_{2,1}^{\frac{d}{2}-1})} .}$$

By interpolation, we have for $\beta>0$:
$$\displaylines{\|u_L\|_{L_T^2(\dot B_{2,1}^{\frac{d}{2}})}\|\widetilde{u}\|_{L_T^2(\dot B_{2,1}^{\frac{d}{2}})}\leq \beta \|u_L\|_{L_T^\infty(\dot B_{2,1}^{\frac{d}{2}-1})}\|\widetilde{u}\|_{L_T^1(\dot B_{2,1}^{\frac{d}{2}+1})} \hfill\cr\hfill +C\beta^{-1}\|\widetilde{u}\|_{L_T^\infty(\dot B_{2,1}^{\frac{d}{2}-1})}\|u_L\|_{L_T^1(\dot B_{2,1}^{\frac{d}{2}+1})}.}$$

We then have:
$$\displaylines{\|\widetilde{u}\|_{L_T^\infty(\dot B_{2,1}^{\frac{d}{2}-1})\cap L_T^1(\dot B_{2,1}^{\frac{d}{2}+1})}\leq C\int_0^T \|\widetilde{u}\|_{\dot B_{2,1}^{\frac{d}{2}-1}}\|u\|_{\dot B_{2,1}^{\frac{d}{2}+1}}d\tau \hfill\cr\hfill +\beta C \|u_0\|_{\dot B_{2,1}^{\frac{d}{2}-1}}\|\widetilde{u}\|_{L_T^1(\dot B_{2,1}^{\frac{d}{2}+1})}\cr\hfill +C\beta^{-1}\|u_L\|_{L_T^1(\dot B_{2,1}^{\frac{d}{2}+1})}\|\widetilde{u}\|_{L_T^\infty(\dot B_{2,1}^{\frac{d}{2}-1})} \cr\hfill +\|u_0\|_{\dot B_{2,1}^{\frac{d}{2}-1}}\|u_L\|_{L_T^1(\dot B_{2,1}^{\frac{d}{2}+1})} \cr\hfill +\|\rho_\alpha\|_{L_\alpha^p L_T^\infty(\dot B_{2,1}^{\frac{d}{2}})}\times T\times \|w_\alpha-u\|_{L_{\alpha}^{p'} L_T^\infty(\dot B_{2,1}^{\frac{d}{2}-1})} .}$$

Let us take $\beta$ so that $\beta C \|u_0\|_{\dot B_{2, 1}^{\frac{d}{2}-1}}\leq \frac{1}{2}$ and $T$ sufficiently small so that $C\beta^{-1}\|u_L\|_{L_T^1(\dot B_{2,1}^{\frac{d}{2}-1})}\leq \frac{1}{2}$. We then have:
$$\displaylines{\|\widetilde{u}\|_{L_T^\infty(\dot B_{2,1}^{\frac{d}{2}-1})\cap L_T^1(\dot B_{2,1}^{\frac{d}{2}+1})}\leq 2C\int_0^T \|\widetilde{u}\|_{\dot B_{2,1}^{\frac{d}{2}-1}}\|u\|_{\dot B_{2,1}^{\frac{d}{2}+1}}d\tau \hfill\cr\hfill +\|u_0\|_{\dot B_{2,1}^{\frac{d}{2}-1}}\|u_L\|_{L_T^1(\dot B_{2,1}^{\frac{d}{2}+1})} \cr\hfill +\|\rho_\alpha\|_{L_\alpha^p L_T^\infty(\dot B_{2,1}^{\frac{d}{2}})}\times T\times \|w_\alpha-u\|_{L_{\alpha}^{p'} L_T^\infty(\dot B_{2,1}^{\frac{d}{2}-1})} .}$$

Given $\varepsilon$, let $T$ be small enough so that $$\|u_0\|_{\dot B_{2,1}^{\frac{d}{2}-1}}\|u_L\|_{L_T^1(\dot B_{2,1}^{\frac{d}{2}+1})}+\|\rho_\alpha\|_{L_\alpha^p L^\infty(\dot B_{2,1}^{\frac{d}{2}})}\times T\times \|w_\alpha-u\|_{L_{\alpha}^{p'} L^\infty(\dot B_{2,1}^{\frac{d}{2}-1})}\leq \varepsilon,$$
we obtain by Grönwall's lemma:
$$\displaylines{\|\widetilde{u}\|_{L_T^\infty(\dot B_{2,1}^{\frac{d}{2}-1})\cap L_T^1(\dot B_{2,1}^{\frac{d}{2}+1})}\leq \varepsilon \exp( 2C\|u\|_{L^1(\dot B_{2,1}^{\frac{d}{2}+1})}).}$$

Hence the result.
\end{proof}

We can now use this information to derive a more accurate stability estimate.

Let us first write the equation verified by $\delta \widetilde{u}\mathrel{\mathop:}=\widetilde{u}_2-\widetilde{u}_1=\delta u$:
$$\displaylines{\partial_t \delta \widetilde{u}-\Delta \delta\widetilde{u}=-\mathbb{P}(\delta \widetilde{u}\cdot\nabla u_L)-\mathbb{P}(\delta \widetilde{u}\cdot\nabla \widetilde{u}_2)-\mathbb{P}(u_1\cdot\nabla\delta \widetilde{u})+\mathbb{P}\int_I(\delta\rho_\alpha (w_{\alpha,2}-u_2))d\mu \hfill\cr\hfill +\mathbb{P}\int_I(\rho_{\alpha,1}(\delta w_\alpha-\delta \widetilde{u}))d\mu.}$$

In $\dot B_{2,1}^{\frac{d}{2}-1}$, we obtain the following estimate: 
$$\displaylines{\|\delta \widetilde{u}(t)\|_{\dot B_{2,1}^{\frac{d}{2}-1}}+\frac{1}{c_B}\int_0^t \|\delta \widetilde{u}\|_{\dot B_{2,1}^{\frac{d}{2}+1}} d\tau\leq \int_0^t \|\delta \widetilde{u}\cdot\nabla u_L\|_{\dot B_{2,1}^{\frac{d}{2}-1}} d\tau \hfill\cr\hfill+\int_0^t \|\delta\widetilde{u}\cdot\nabla \widetilde{u}_2\|_{\dot B_{2,1}^{\frac{d}{2}-1}} d\tau+\int_0^t \|u_1\cdot \nabla \delta \widetilde{u}\|_{\dot B_{2,1}^{\frac{d}{2}-1}} d\tau \cr\hfill+\int_0^t \int_I \|\delta\rho_\alpha(w_{\alpha,2}-u_2)\|_{\dot B_{2,1}^{\frac{d}{2}-1}}d\mu d\tau \cr\hfill+\int_0^t \int_I \|\rho_{\alpha,1}(\delta w_\alpha-\delta \widetilde{u})\|_{\dot B_{2,1}^{\frac{d}{2}-1}} d\mu  d\tau.}$$

By the product laws of Lemma \ref{Produit espace de Besov} and the Hölder inequality for $\alpha$ integrals, we have:
$$\displaylines{\|\delta \widetilde{u}(t)\|_{\dot B_{2,1}^{\frac{d}{2}-1}}+\frac{1}{c_B}\int_0^t \|\delta \widetilde{u}\|_{\dot B_{2,1}^{\frac{d}{2}+1}} d\tau\leq C\|\delta \widetilde{u}\|_{L_t^\infty(\dot B_{2,1}^{\frac{d}{2}-1})} \| u_L\|_{L_t^1(\dot B_{2,1}^{\frac{d}{2}+1})} \hfill\cr\hfill+C\|\delta\widetilde{u}\|_{L_t^\infty(\dot B_{2,1}^{\frac{d}{2}-1})}\|\widetilde{u}_2\|_{L_t^1(\dot B_{2,1}^{\frac{d}{2}+1})} \cr\hfill+C\|u_1\|_{L_t^\infty(\dot B_{2,1}^{\frac{d}{2}-1})}\| \delta \widetilde{u}\|_{L_t^1(\dot B_{2,1}^{\frac{d}{2}+1})} \cr\hfill+ C\|\delta\rho_\alpha\|_{L_\alpha^p L_t^\infty(\dot B_{2,1}^{\frac{d}{2}-1})} \|w_{\alpha,2}-u_2\|_{L_\alpha^{p'} L_t^1(\dot B_{2,1}^{\frac{d}{2}})} \cr\hfill+ C\|\rho_{\alpha,1}\|_{L_\alpha^p L_t^\infty(\dot B_{2,1}^{\frac{d}{2}-1})} \|\delta w_\alpha-\delta \widetilde{u}\|_{L_\alpha^{p'} L_t^1(\dot B_{2,1}^{\frac{d}{2}-1})}.}$$

By smallness of $\|u_1\|_{L^\infty(\dot B_{2,1}^{\frac{d}{2}-1})}$, we have: 
$$\displaylines{\|\delta \widetilde{u}\|_{L_T^\infty(\dot B_{2,1}^{\frac{d}{2}-1})}+\frac{1}{2c_B}\|\delta \widetilde{u}\|_{L_T^1(\dot B_{2,1}^{\frac{d}{2}+1})} \leq C\|\delta \widetilde{u}\|_{L_T^\infty(\dot B_{2,1}^{\frac{d}{2}-1})} \| u_L\|_{L_T^1(\dot B_{2,1}^{\frac{d}{2}+1})} \hfill\cr\hfill+C\|\delta\widetilde{u}\|_{L_T^\infty(\dot B_{2,1}^{\frac{d}{2}-1})}\|\widetilde{u}_2\|_{L_T^1(\dot B_{2,1}^{\frac{d}{2}+1})}  \cr\hfill+ C\|\delta\rho_\alpha\|_{L_\alpha^p L_T^\infty(\dot B_{2,1}^{\frac{d}{2}-1})} \|w_{\alpha,2}-u_2\|_{L_\alpha^{p'} L_T^1(\dot B_{2,1}^{\frac{d}{2}})} \cr\hfill+ C\|\rho_{\alpha,1}\|_{L_\alpha^p L_T^\infty(\dot B_{2,1}^{\frac{d}{2}-1})} \|\delta w_\alpha-\delta \widetilde{u}\|_{L_\alpha^{p'} L_T^1(\dot B_{2,1}^{\frac{d}{2}-1})}.}$$

Taking $T$ small enough, by \eqref{estimée sur uL2} and by Lemma \ref{estimée tilde u}, we have:
\begin{multline*}
   \|\delta \widetilde{u}\|_{L_T^\infty(\dot B_{2,1}^{\frac{d}{2}-1})}+\frac{1}{c_B}\|\delta \widetilde{u}\|_{L_T^1(\dot B_{2,1}^{\frac{d}{2}+1})} \\ \leq C\|\delta\rho_\alpha\|_{L_\alpha^p L_T^\infty(\dot B_{2,1}^{\frac{d}{2}-1})} \|w_{\alpha,2}-u_2\|_{L_\alpha^{p'} L_T^1(\dot B_{2,1}^{\frac{d}{2}})} \\+ C\|\rho_{\alpha,1}\|_{L_\alpha^p L_T^\infty(\dot B_{2,1}^{\frac{d}{2}-1})} \|\delta w_\alpha-\delta \widetilde{u}\|_{L_\alpha^{p'} L_T^1(\dot B_{2,1}^{\frac{d}{2}-1})}. 
\end{multline*}

Let us decompose $\|w_{\alpha,2}-u_2\|_{L_\alpha^{p'} L_T^1(\dot B_{2,1}^{\frac{d}{2}})}$ on the following way: 
$$\displaylines{\|w_{\alpha,2}-u_2\|_{L_\alpha^{p'}L_T^1(\dot B_{2,1}^{\frac{d}{2}})}\leq T\|w_{\alpha,2}-u_2\|_{L_\alpha^{p'}L_T^\infty(\dot B_{2,1}^{\frac{d}{2}-1})}^l+T \|w_{\alpha,2}\|_{L_\alpha^{p'}L_T^\infty(\dot B_{2,1}^{\frac{d}{2}+1})}^h \hfill\cr\hfill +\|(u_L,\widetilde{u}_2)\|_{L_T^1(\dot B_{2,1}^{\frac{d}{2}+1})}^h.}$$

We derive the following estimate:
\begin{multline}\label{dernière estimée tilde u}
   \|\delta \widetilde{u}\|_{L_T^\infty(\dot B_{2,1}^{\frac{d}{2}-1})}+\frac{1}{c_B}\|\delta \widetilde{u}\|_{L_T^1(\dot B_{2,1}^{\frac{d}{2}+1})} \\ \leq C\|\delta\rho_\alpha\|_{L_\alpha^p L_T^\infty(\dot B_{2,1}^{\frac{d}{2}-1})} \bigg(  T\|w_{\alpha,2}-u_2\|_{L_\alpha^{p'}L_T^\infty(\dot B_{2,1}^{\frac{d}{2}-1})}^l+T \|w_{\alpha,2}\|_{L_\alpha^{p'}L_T^\infty(\dot B_{2,1}^{\frac{d}{2}+1})}^h \hfill\cr\hfill +\|(u_L,\widetilde{u}_2)\|_{L_T^1(\dot B_{2,1}^{\frac{d}{2}+1})}^h  \bigg) \\+ C\|\rho_{\alpha,1}\|_{L_\alpha^p L_T^\infty(\dot B_{2,1}^{\frac{d}{2}-1})} \|\delta w_\alpha-\delta \widetilde{u}\|_{L_\alpha^{p'} L_T^1(\dot B_{2,1}^{\frac{d}{2}-1})}. 
\end{multline}

\subsubsection{Estimate on the damped mode}~\\
With the following inequalities $$\|w_{\alpha,2}\|_{L_\alpha^\infty L_T^1(\dot B_{2,1}^{\frac{d}{2}+1})}\leq T\|w_{\alpha,2}\|_{L_\alpha^\infty L_T^\infty(\dot B_{2,1}^{\frac{d}{2}+1})},$$
$$\|u_2\|_{L_T^1(\dot B_{2,1}^{\frac{d}{2}+1})}\leq \|u_L\|_{L_T^1(\dot B_{2,1}^{\frac{d}{2}+1})}+\|\widetilde{u}_2\|_{L_T^1(\dot B_{2,1}^{\frac{d}{2}+1})},$$
we deduce from \eqref{unicité estimée u-v multi} :
 $$\displaylines{\|\delta w_\alpha-\delta \widetilde{u}\|_{L_\alpha^{p'}L_T^\infty(\dot B_{2,1}^{\frac{d}{2}-1})}+\|\delta w_\alpha-\delta \widetilde{u}\|_{L_\alpha^{p'}L_T^1(\dot B_{2,1}^{\frac{d}{2}-1})} \hfill\cr \leq \|\delta \widetilde{u}\|_{L_T^1(\dot B_{2,1}^{\frac{d}{2}+1})}(c_B+C\|u_1\|_{L_T^\infty(\dot B_{2,1}^{\frac{d}{2}-1})}) +C\|\delta \rho_\alpha\|_{L_\alpha^{p}L_T^\infty(\dot B_{2,1}^{\frac{d}{2}-1})} \times \hfill\cr\hfill \times \bigg(T\|w_{\alpha,2}-u_2\|_{L_\alpha^{p'} L_T^\infty(\dot B_{2,1}^{\frac{d}{2}-1})}^l+T\|w_{\alpha,2}\|_{L_\alpha^{p'}L_T^\infty(\dot B_{2,1}^{\frac{d}{2}+1})}^h +\|(u_L,\widetilde{u}_2)\|_{L_T^1(\dot B_{2,1}^{\frac{d}{2}})}^h
 \bigg)\cr\hfill +C\|\delta u\|_{L_T^\infty(\dot B_{2,1}^{\frac{d}{2}-1})}\left(T\|w_{\alpha,2}\|_{L_\alpha^{p'}L_T^\infty(\dot B_{2,1}^{\frac{d}{2}+1})}+\|u_L\|_{L_T^1(\dot B_{2,1}^{\frac{d}{2}+1})}+\|\widetilde{u}_2\|_{L_T^1(\dot B_{2,1}^{\frac{d}{2}+1})}\right) 
 \cr\hfill+C\|\rho_{\alpha,1}\|_{L_\alpha^{p} L_T^\infty(\dot B_{2,1}^{\frac{d}{2}})}\|\delta w_\alpha-\delta u\|_{L_\alpha^{p'}L_T^1(\dot B_{2,1}^{\frac{d}{2}-1})} \cr\hfill +CT\|w_{\alpha,2}\|_{L_\alpha^{\infty}L_T^\infty(\dot B_{2,1}^{\frac{d}{2}+1})}\|\delta w_\alpha-\delta u\|_{L_\alpha^{p'}L_T^\infty(\dot B_{2,1}^{\frac{d}{2}-1})} 
 \cr\hfill +C\|u_1\|_{L_T^1(\dot B_{2,1}^{\frac{d}{2}+1})}\|\delta w_\alpha-\delta u\|_{L_\alpha^{p'}L_T^\infty(\dot B_{2,1}^{\frac{d}{2}-1})}  \cr\hfill +C\|w_{\alpha,1}-u_1\|_{L_\alpha^{\infty}L_T^1(\dot B_{2,1}^{\frac{d}{2}-1})} \|\delta w_\alpha\|_{L_\alpha^{p'}L_T^\infty(\dot B_{2,1}^{\frac{d}{2}})}.}$$

By smallness of $\|\rho_{\alpha,1}\|_{L_\alpha^p L_T^\infty(\dot B_{2,1}^{\frac{d}{2}})}$ and $\|u_1\|_{L^\infty(\dot B_{2,1}^{\frac{d}{2}-1})\cap L^1(\dot B_{2,1}^{\frac{d}{2}+1})}$ and for $T$ small enough, we deduce: 
\begin{multline}\label{estimée mode amorti tilde}
\|\delta w_\alpha-\delta \widetilde{u}\|_{L_\alpha^{p'}L_T^\infty(\dot B_{2,1}^{\frac{d}{2}-1})}+\|\delta w_\alpha-\delta \widetilde{u}\|_{L_\alpha^{p'}L_T^1(\dot B_{2,1}^{\frac{d}{2}-1})} \hfill\cr \leq 2\|\delta \widetilde{u}\|_{L_T^1(\dot B_{2,1}^{\frac{d}{2}+1})}(c_B+C\|u_1\|_{L_T^\infty(\dot B_{2,1}^{\frac{d}{2}-1})}) +2C\|\delta \rho_\alpha\|_{L_\alpha^{p}L_T^\infty(\dot B_{2,1}^{\frac{d}{2}-1})} \times \hfill\cr\hfill \times \bigg(T\|w_{\alpha,2}-u_2\|_{L_\alpha^{p'} L_T^\infty(\dot B_{2,1}^{\frac{d}{2}-1})}^l+T\|w_{\alpha,2}\|_{L_\alpha^{p'}L_T^\infty(\dot B_{2,1}^{\frac{d}{2}+1})}^h +\|(u_L,\widetilde{u}_2)\|_{L_T^1(\dot B_{2,1}^{\frac{d}{2}})}^h
 \bigg)\cr\hfill +2C\|\delta u\|_{L_T^\infty(\dot B_{2,1}^{\frac{d}{2}-1})}\left(T\|w_{\alpha,2}\|_{L_\alpha^{p'}L_T^\infty(\dot B_{2,1}^{\frac{d}{2}+1})}+\|u_L\|_{L_T^1(\dot B_{2,1}^{\frac{d}{2}+1})}+\|\widetilde{u}_2\|_{L_T^1(\dot B_{2,1}^{\frac{d}{2}+1})}\right)  
  \cr\hfill +2C\|w_{\alpha,1}-u_1\|_{L_\alpha^{\infty}L_T^1(\dot B_{2,1}^{\frac{d}{2}-1})} \|\delta w_\alpha\|_{L_\alpha^{p'}L_T^\infty(\dot B_{2,1}^{\frac{d}{2}})}.
\end{multline}

\subsubsection{Conclusion of uniqueness}
By multiplying by $5+10 c_B$ the inequality \eqref{dernière estimée tilde u}, by $5$ the inequality \eqref{estimée mode amorti tilde} and summing up with equalities \eqref{dernière estimée rho} and \eqref{dernière estimée w}, we obtain for $T$ small enough, putting all linear terms in the left-hand member:
$$\displaylines{\|\delta \rho_\alpha\|_{L_\alpha^p L_T^\infty(\dot B_{2,1}^{\frac{d}{2}-1})}+\|\delta w_\alpha\|_{L_\alpha^{p'}L_T^\infty(\dot B_{2,1}^{\frac{d}{2}})}+\|\delta w_\alpha\|_{L_\alpha^{p'}L_T^1(\dot B_{2,1}^{\frac{d}{2}})}^h \hfill\cr+\|\delta \widetilde{u}\|_{L_T^\infty(\dot B_{2,1}^{\frac{d}{2}-1})\cap L_T^1(\dot B_{2,1}^{\frac{d}{2}+1})}+\|\delta w_\alpha-\delta\widetilde{u}\|_{L_\alpha^{p'}L_T^\infty(\dot B_{2,1}^{\frac{d}{2}-1})\cap L_\alpha^{p'}L_T^1(\dot B_{2,1}^{\frac{d}{2}-1})} \hfill\cr
\leq  C\|\rho_{\alpha,1}\|_{L_\alpha^p L^\infty(\dot B_{2,1}^{\frac{d}{2}-1})}(\|\delta w_\alpha\|_{L_\alpha^{\infty}L_T^1(\dot B_{2,1}^{\frac{d}{2}})}+\|\delta w_\alpha-\delta \widetilde{u}\|_{L_\alpha^{p'} L_T^1(\dot B_{2,1}^{\frac{d}{2}-1})}) \cr\hfill+C\|\delta\rho_\alpha\|_{L_\alpha^p L_T^\infty(\dot B_{2,1}^{\frac{d}{2}-1})} \bigg(  T\|w_{\alpha,2}-u_2\|_{L_\alpha^{p'}L_T^\infty(\dot B_{2,1}^{\frac{d}{2}-1})}^l +T \|w_{\alpha,2}\|_{L_\alpha^{p'}L_T^\infty(\dot B_{2,1}^{\frac{d}{2}+1})}^h  \cr\hfill+\|(u_L,\widetilde{u}_2)\|_{L_T^1(\dot B_{2,1}^{\frac{d}{2}+1})}  \bigg)  +C\|u_1\|_{L_T^\infty(\dot B_{2,1}^{\frac{d}{2}-1})} \|\delta u\|_{L_T^1(\dot B_{2,1}^{\frac{d}{2}+1})} 
\cr\hfill +\|\delta \widetilde{u}\|_{L_T^\infty(\dot B_{2,1}^{\frac{d}{2}-1})}\left(T\|w_{\alpha,2}\|_{L_\alpha^{p'}L_T^\infty(\dot B_{2,1}^{\frac{d}{2}+1})}+\|u_L\|_{L_T^1(\dot B_{2,1}^{\frac{d}{2}+1})}+\|\widetilde{u}_2\|_{L_T^1(\dot B_{2,1}^{\frac{d}{2}+1})}\right) 
\cr\hfill +2C\|w_{\alpha,1}-u_1\|_{L_\alpha^{\infty}L_T^1(\dot B_{2,1}^{\frac{d}{2}-1})} \|\delta w_\alpha\|_{L_\alpha^{p'}L_T^\infty(\dot B_{2,1}^{\frac{d}{2}})}
.}$$

By the smallness of the solution $(\rho_{\alpha,1},w_{\alpha,1},u_1)$ and taking $T$ small enough with the use of \eqref{estimée sur uL2} and Lemma \ref{estimée tilde u}, we deduce: $$\displaylines{\|\delta \rho_\alpha\|_{L_\alpha^p L_T^\infty(\dot B_{2,1}^{\frac{d}{2}-1})}+\|\delta w_\alpha\|_{L_\alpha^{p'}L_T^\infty(\dot B_{2,1}^{\frac{d}{2}})}+\|\delta w_\alpha\|_{L_\alpha^{p'}L_T^1(\dot B_{2,1}^{\frac{d}{2}})}^h \hfill\cr+\|\delta \widetilde{u}\|_{L_T^\infty(\dot B_{2,1}^{\frac{d}{2}-1})\cap L_T^1(\dot B_{2,1}^{\frac{d}{2}+1})}+\|\delta w_\alpha-\delta\widetilde{u}\|_{L_\alpha^{p'}L_T^\infty(\dot B_{2,1}^{\frac{d}{2}-1})\cap L_\alpha^{p'}L_T^1(\dot B_{2,1}^{\frac{d}{2}-1})}\leq 0.}$$

Hence the uniqueness on $[0,T]$. By classical bootstrap argument, we can deduce the uniqueness on $\R^+$.
\section{From the Vlasov-Navier-Stokes system to the Euler-Navier-Stokes system}
Let us start by proving \eqref{petitesse différence des solutions}.
Then we will finish by proving convergence \eqref{convergence}.

Since $\mu$ is a probability measure, we can write for $f$ that is measurable
$$f=\int f d\mu$$ and therefore use the stability estimates from Lemma \ref{stabilité multi} for $(\rho,w,\widetilde{u})$ and $(\rho_\alpha,w_\alpha,u)$.

By correctly combining the previous estimates (by multiplying \eqref{unicité estimée u-v multi} by $\frac{1}{2 c_B^2}$ and summing all inequalities \eqref{unicité estimée rho multi},\eqref{unicité estimée v multi},\eqref{unicité estimée u multi}) and using the a priori estimates \cite[Theorem 2.1]{MonENS} and \eqref{estimée théorème système 1 multi} with the conditions of smallness \eqref{condition initiale}, \eqref{condition initiale multi} et \eqref{petitesse différence condition initiale}, we deduce easily \eqref{petitesse différence des solutions}.

Let us now prove \eqref{convergence}.

By the triangular inequality and the fact that $\mu$ is a probability measure, we have: 
$$\displaylines{\widetilde{W}_1(\int_I \rho_\alpha\otimes \delta_{v=w_\alpha}d\mu,\rho\otimes \delta_{v=w})\leq \widetilde{W}_1(\int_I \rho_\alpha\otimes \delta_{v=w_\alpha}d\mu, \int_I \rho\otimes \delta_{v=w_\alpha}d\mu) \hfill\cr\hfill+\widetilde{W}_1(\int_I \rho\otimes \delta_{v=w_\alpha}d\mu, \int_I \rho\otimes \delta_{v=w}d\mu).}$$

Now, by the definition of $\widetilde{W}_1$ and the continuous injection $\dot B_{2,1}^{\frac{d}{2}-1} \hookrightarrow L^d,$ we have with \eqref{petitesse différence des solutions}: 
$$\begin{aligned} \widetilde{W}_1(\int_I  \rho_\alpha & \otimes  \delta_{v=w_\alpha}d\mu ,\rho\otimes \delta_{v=w_\alpha}) \\ &  \leq \underset{\|\nabla_{x,v}\phi\|_{L^\infty}+ \|\phi\|_{L_x^{d'}L_v^\infty}\leq 1}{\sup} \int_I \int_{\R^d} (\rho_\alpha(t,x)-\rho(t,x)) \phi(x,w_\alpha)dx d\mu \\ & \leq \underset{\|\nabla_{x,v}\phi\|_{L^\infty}+ \|\phi\|_{L_x^{d'}L_v^\infty}\leq 1}{\sup}(\|\rho_\alpha-\rho\|_{L_\alpha^p L_t^\infty(L_x^{d})} \|\phi\|_{L_x^{d'}L_v^\infty})
\\ & \leq \|\rho_\alpha-\rho\|_{L_\alpha^p L_t^\infty(\dot B_{2,1}^{\frac{d}{2}-1})}
\\ & \lesssim\varepsilon. \end{aligned}$$ 

Using the transport equation on $\rho$, we also know that $\|\rho(t)\|_{L^1}=\|\rho_0\|_{L^1}$ and deduce: 
$$\begin{aligned}\widetilde{W}_1(\int_I \rho\otimes  \delta_{v=w_\alpha} & d\mu , \int_I \rho \otimes \delta_{v=w}  d\mu) \\ & \leq \underset{\|\nabla_{x,v}\phi\|_{L^\infty}\leq 1}{\sup} \int_I \int_{\R^d} \rho(t,x) |\phi(x,w_\alpha)-\phi(x,w)|dx d\mu \\ & \leq \int_I \int_{\R^d} \rho(t,x) |w_\alpha-w|dx d\mu \\ &  \leq \|\rho(t)\|_{L_x^1} \|w_\alpha-w\|_{L_\alpha^{p'}L_t^\infty(L^\infty)} \\ &  \leq \|\rho_0\|_{L^1} \|w_\alpha-w\|_{L_\alpha^{p'}L_t^\infty(\dot B_{2,1}^{\frac{d}{2}})}\\ &  \lesssim \varepsilon.\end{aligned}$$

Hence \eqref{convergence}.
\appendix

\section{}
We recall here a classical lemma on a differential inequality, product laws and commutator estimate in Besov spaces, which have been used repeatedly in this paper.
\begin{lemma}\label{lemme edo}
Let $\displaystyle X:[0,T]\rightarrow \R_+$ be a continuous function such that $X^2$ is differentiable. Assume there exists a constant $c\geq 0$ and a measurable function $A:[0,T]\rightarrow \R_+$ such that
$$\frac{1}{2}\frac{d}{dt}X^2+c X^2\leq A X \quad \text{a.e. on} \ [0,T].$$
Then, for all $t\in [0,T]$, we have:
$$X(t)+c\int_0^t X(\tau)\,d\tau\leq X_0 +\int_0^t A(\tau) \,d\tau.$$
\end{lemma}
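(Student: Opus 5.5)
The inequality for $X$ can be derived from the differential inequality through a regularization. On the set where $X>0$, $\frac12\frac{d}{dt}X^2 = X\dot X$, and dividing by $X$ gives $\dot X + cX\le A$. The difficulty is that $X$ may vanish, where this division is illegitimate and $X$ need not be differentiable. To avoid this, for $\varepsilon>0$ we will work with
\[
X_\varepsilon:=\sqrt{X^2+\varepsilon^2}.
\]
This function is positive, continuous and differentiable wherever $X^2$ is, with
\[
\frac{d}{dt}X_\varepsilon=\frac{1}{2X_\varepsilon}\frac{d}{dt}X^2 .
\]

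\textbf{Step 1.} Using the hypothesis, a.e. on $[0,T]$ we will write
\[
\frac{d}{dt}X_\varepsilon \le \frac{AX - cX^2}{X_\varepsilon}\le A - c\,\frac{X^2}{X_\varepsilon},
\]
since $X\le X_\varepsilon$ and $A\ge0$.

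\textbf{Step 2.} Next we compare $X^2/X_\varepsilon$ with $X_\varepsilon$:
\[
\frac{X^2}{X_\varepsilon}=X_\varepsilon-\frac{\varepsilon^2}{X_\varepsilon}\ge X_\varepsilon-\varepsilon .
\]
Hence $\dot X_\varepsilon + cX_\varepsilon\le A + c\varepsilon$ a.e.

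\textbf{Step 3.} We integrate on $[0,t]$ to get
\[
X_\varepsilon(t)+c\int_0^t X_\varepsilon\,d\tau\le X_\varepsilon(0)+\int_0^t A\,d\tau + c\varepsilon t,
\]
then let $\varepsilon\to0$, using $X_\varepsilon\to X$ uniformly ($|X_\varepsilon-X|\le\varepsilon$). This gives exactly the claimed inequality.

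\textbf{Main obstacle.} The delicate point is justifying the integration in Step 3. The fundamental theorem of calculus requires $X_\varepsilon$ to be absolutely continuous, and "$X^2$ differentiable" must be read as "$X^2$ absolutely continuous (e.g. $W^{1,1}$) with a.e. derivative". In the applications $X=\|u_j\|_{L^2}$ and $X^2$ is $W^{1,1}$ in time, so this holds. If only a.e. differentiability were assumed, we would instead use that $X_\varepsilon=g_\varepsilon(X^2)$, where $g_\varepsilon(s)=\sqrt{s+\varepsilon^2}$ is Lipschitz, so absolute continuity transfers from $X^2$ to $X_\varepsilon$.

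If $\int_0^T A=\infty$ the statement is trivial. Otherwise every term is finite, and monotone and dominated convergence justify the limit $\varepsilon\to0$.
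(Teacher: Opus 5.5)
The paper gives no proof of this lemma; it is recalled in the appendix as classical. Your regularization by $X_\varepsilon=\sqrt{X^2+\varepsilon^2}$ is the standard argument for it, and Steps 1--3 are correct. The passage to the limit is simpler than you suggest: since $X\le X_\varepsilon\le X+\varepsilon$, Step 3 directly gives $X(t)+c\int_0^t X\,d\tau\le X_0+\int_0^t A\,d\tau+\varepsilon(1+ct)$. No convergence theorem is needed.

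The one thing to correct is the last sentence of your ``main obstacle'' paragraph. If $X^2$ were only differentiable a.e., there would be no absolute continuity to transfer through the Lipschitz map $g_\varepsilon$. In fact the lemma is false in that setting. Take $T=1$, $c=0$, $A\equiv 0$ and $X=\sqrt{1+\mathcal C}$, with $\mathcal C$ the Cantor function. The hypothesis holds a.e., yet $X(1)=\sqrt 2>1=X(0)$.

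So what your proof really uses is that $X^2$ is absolutely continuous, and you should state that as the hypothesis. It holds in all the paper's applications, where $X=\|u_j(t)\|_{L^2}$.

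If instead ``differentiable'' is read as differentiable at every point, the conclusion is still true, but the fundamental theorem of calculus is not directly available. You can instead use the classical fact that a continuous function whose lower Dini derivative is $>-\infty$ everywhere and $\ge 0$ a.e. is nondecreasing. Apply it to $H(t)=\int_0^t (A+c\varepsilon-cX_\varepsilon)\,d\tau-X_\varepsilon(t)$ when $\int_0^T A\,d\tau<\infty$; the case $\int_0^T A\,d\tau=\infty$ is trivial, as you noted.
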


The following lemmas are classical results on Besov spaces, see for instance \cite{BCD}.

\begin{lemma} \label{Produit espace de Besov}
For $d\geq 2$, $s,s'\leq \frac{d}{2}$ satisfying $s+s'>0$, the pointwise product extends to a continuous map from $\dot B_{2,1}^{s}(\R^d)\times \dot B_{2,1}^{s'}(\R^d)$ into $\dot B_{2,1}^{s+s'-\frac{d}{2}}(\R^d)$. In particular, $\dot B_{2,1}^{\frac{d}{2}}$ is an algebra under multiplication for $d\geq 1$.

For $d\geq 1$, we have for $(u,v)\in \dot B_{2,1}^{\frac{d}{2}}\cap  \dot B_{2,1}^{\frac{d}{2}+1}$ that $uv\in  \dot B_{2,1}^{\frac{d}{2}+1}$ and the following inequality holds:
$$\|uv\|_{ \dot B_{2,1}^{\frac{d}{2}+1}}\lesssim \|u\|_{ \dot B_{2,1}^{\frac{d}{2}}}\|v\|_{ \dot B_{2,1}^{\frac{d}{2}+1}}+\|u\|_{ \dot B_{2,1}^{\frac{d}{2}+1}}\|v\|_{ \dot B_{2,1}^{\frac{d}{2}}}.$$

In the case $s+s'=0$, the pointwise product extends to a continuous map from $\dot B_{2,1}^{s}(\R^d)\times \dot B_{2,\infty}^{s'}(\R^d)$ into $\dot B_{2,\infty}^{s+s'-\frac{d}{2}}(\R^d)$.
\end{lemma}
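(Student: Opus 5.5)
The statement immediately above is the product law, Lemma~\ref{Produit espace de Besov}. I would prove it with Bony's paraproduct decomposition, working block by block with $\dot\Delta_j$ and the Bernstein inequalities. Write
\[
uv=T_uv+T_vu+R(u,v),\qquad T_uv:=\sum_{j}\dot S_{j-1}u\,\dot\Delta_j v,\qquad R(u,v):=\sum_{|j-j'|\le 1}\dot\Delta_j u\,\dot\Delta_{j'}v,
\]
and estimate each piece separately in $\dot B_{2,1}^{s+s'-\frac d2}$.

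\textbf{Paraproducts.} Each term $\dot S_{j-1}u\,\dot\Delta_j v$ is spectrally supported in an annulus $2^j\widetilde{\mathcal A}$. Hence
\[
\|\dot\Delta_{k}(T_uv)\|_{L^2}\lesssim\sum_{|j-k|\le 4}\|\dot S_{j-1}u\|_{L^\infty}\|\dot\Delta_j v\|_{L^2}.
\]
By Bernstein,
\[
\|\dot S_{j-1}u\|_{L^\infty}\lesssim\sum_{j'\le j-2}2^{j'\frac d2}\|\dot\Delta_{j'}u\|_{L^2}=\sum_{j'\le j-2}2^{j'(\frac d2-s)}\,2^{j's}\|\dot\Delta_{j'}u\|_{L^2}.
\]
Since $s\le\frac d2$, this is bounded by $2^{j(\frac d2-s)}\|u\|_{\dot B^{s}_{2,1}}$. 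In the endpoint case $s=\frac d2$ it is simply the embedding $\dot B_{2,1}^{d/2}\hookrightarrow L^\infty$.

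Multiplying by $2^{k(s+s'-\frac d2)}$ and summing over $k$ gives $\|T_uv\|_{\dot B_{2,1}^{s+s'-\frac d2}}\lesssim\|u\|_{\dot B_{2,1}^s}\|v\|_{\dot B_{2,1}^{s'}}$. The bound for $T_vu$ follows symmetrically, using $s'\le\frac d2$.

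\textbf{Remainder.} This is the step where $s+s'>0$ is needed. The block $\dot\Delta_j u\,\widetilde{\dot\Delta}_j v$ is supported in a ball of radius $C2^j$, so $\dot\Delta_k R$ only sees $j\ge k-N_0$. I would use Bernstein from $L^1$ to $L^2$:
\[
\|\dot\Delta_k R\|_{L^2}\lesssim 2^{k\frac d2}\sum_{j\ge k-N_0}\|\dot\Delta_ju\|_{L^2}\|\widetilde{\dot\Delta}_jv\|_{L^2}.
\]
Then
\[
2^{k(s+s'-\frac d2)}\|\dot\Delta_kR\|_{L^2}\lesssim\sum_{j\ge k-N_0}2^{(k-j)(s+s')}\,a_j,\qquad a_j:=2^{js}\|\dot\Delta_ju\|_{L^2}\,2^{js'}\|\widetilde{\dot\Delta}_jv\|_{L^2}.
\]
Since $s+s'>0$, the kernel $2^{(k-j)(s+s')}\mathbb 1_{j\ge k-N_0}$ is summable, and Young's inequality for series gives the $\ell^1$ bound by $\|u\|_{\dot B^s_{2,1}}\|v\|_{\dot B^{s'}_{2,1}}$. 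The algebra property of $\dot B_{2,1}^{d/2}$ is the case $s=s'=\frac d2$.

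\textbf{Endpoint $s+s'=0$.} The paraproducts are treated as above. In the remainder the kernel becomes constant in $k-j$, so summing over $j$ gives only a uniform bound: $\sup_k$ of the left side is $\lesssim\sum_j a_j$, controlled since $\sum_j a_j\le\|u\|_{\dot B^s_{2,1}}\sup_j 2^{js'}\|\widetilde{\dot\Delta}_jv\|_{L^2}$. This yields the $\dot B_{2,\infty}^{-\frac d2}$ target, with $v\in\dot B^{s'}_{2,\infty}$. The paraproduct $T_vu$ still works because $s\le\frac d2$, and $T_uv$ lands in $\dot B_{2,\infty}$ as well.

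\textbf{Estimate in $\dot B_{2,1}^{\frac d2+1}$.} The same decomposition applies. For $T_uv$, put $\dot S_{j-1}u$ in $L^\infty\hookleftarrow\dot B_{2,1}^{d/2}$ and $\dot\Delta_jv$ at regularity $\frac d2+1$. For $T_vu$, argue symmetrically. For $R$, the regularity exponents sum to $d+1>0$, so the remainder argument above applies.

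\textbf{Main obstacle.} The only delicate point is bookkeeping the support of the remainder blocks and the sign condition $s+s'>0$; everything else is routine convolution of sequences.
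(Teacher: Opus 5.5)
The paper does not prove this lemma; it cites Bahouri--Chemin--Danchin, where the proof is exactly your Bony decomposition. Your treatment of the case $s+s'>0$ and of the $\dot B_{2,1}^{\frac d2+1}$ estimate is correct and standard. The gap is in the endpoint case $s+s'=0$.

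\textbf{The paraproduct $T_vu$ at the endpoint.} Here $v$ is only in $\dot B^{s'}_{2,\infty}$, and in $T_vu=\sum_j \dot S_{j-1}v\,\dot\Delta_j u$ it is $v$ that goes in $L^\infty$. Your Bernstein bound then reads
\[
\|\dot S_{j-1}v\|_{L^\infty}\lesssim \|v\|_{\dot B^{s'}_{2,\infty}}\sum_{j'\le j-2}2^{j'(\frac d2-s')},
\]
and the sum is finite if and only if $s'<\frac d2$. The hypothesis $s\le\frac d2$ that you invoke plays no role in this term. The $\ell^1$ summation that handled $s'=\frac d2$ in the non-endpoint part is also no longer available. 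For $s'<\frac d2$ you get $\|\dot S_{j-1}v\|_{L^\infty}\lesssim 2^{j(\frac d2-s')}\|v\|_{\dot B^{s'}_{2,\infty}}$, and your argument goes through.

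\textbf{The corner $s=-\frac d2$, $s'=\frac d2$.} The statement as written allows this case, and the inequality is false there. Take $\psi,\phi$ with $\widehat\psi,\widehat\phi\ge 0$ nonzero and supported in $\{4/3<|\xi|<3/2\}$, where $\varphi\equiv 1$. Set $u=\psi$ and $v_N=\sum_{j'=-N}^{-3}\phi(2^{j'}\cdot)$.
\begin{itemize}
\item Since $\dot\Delta_{j'}v_N=\phi(2^{j'}\cdot)$, we get $\|v_N\|_{\dot B^{d/2}_{2,\infty}}=\|\phi\|_{L^2}$ for every $N$, and $\|u\|_{\dot B^{-d/2}_{2,1}}$ is fixed.
\item On $B(0,1)$ we have $v_N=(N-2)\phi(0)+O(1)$, and $\phi(0)>0$ because $\widehat\phi\ge0$.
\item The product $uv_N$ is spectrally supported in a fixed annulus, so $\|uv_N\|_{\dot B^{-d/2}_{2,\infty}}\approx\|uv_N\|_{L^2}\gtrsim N$.
\end{itemize}
So the left side grows like $N$ while $\|u\|_{\dot B^{-d/2}_{2,1}}\|v_N\|_{\dot B^{d/2}_{2,\infty}}$ stays bounded. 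The endpoint claim therefore needs the extra hypothesis $s'<\frac d2$ (equivalently $s>-\frac d2$), which is the usual form of this product law. With that hypothesis added and the justification for $T_vu$ corrected, your proof is complete.
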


\begin{lemma}\label{commutateur}
Let $v\in \dot B_{2,1}^{\frac{d}{2}-1}$ and $f\in\dot B_{2,1}^s$ with $s\in ]-\frac{d}{2},\frac{d}{2}+1]$. There exists a constant $C$ depending only on $s$ and $d$, and a sequence $(c_j)_{j\in\Z}$ satisfying $\sum_{j\in\Z}c_j=1$ such that
$$\|[\dot\Delta_j,v\cdot\nabla]f\|_{L^2}\leq C c_j 2^{-js}\|\nabla v\|_{\dot B_{2,1}^{\frac{d}{2}}}\|f\|_{\dot B_{2,1}^s}.$$
\end{lemma}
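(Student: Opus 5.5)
We prove the commutator estimate of Lemma~\ref{commutateur} with Bony's paraproduct decomposition, following the classical argument of \cite{BCD} (Lemma 2.100). Writing $v\cdot\nabla f=\sum_k v^k\partial_k f$, we split
\[
[\dot\Delta_j,v\cdot\nabla]f=[\dot\Delta_j,T_{v^k}]\partial_k f+\dot\Delta_j\big(T_{\partial_k f}v^k\big)+\dot\Delta_j\big(R(v^k,\partial_k f)\big)-T'_{\dot\Delta_j\partial_k f}v^k ,
\]
where $T_ab=\sum_{j'}\dot S_{j'-1}a\,\dot\Delta_{j'}b$, $R$ is the remainder, and $T'_ab=T_ab+R(a,b)$. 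Each of the four pieces will be bounded by $c_j2^{-js}\|\nabla v\|_{\dot B_{2,1}^{d/2}}\|f\|_{\dot B_{2,1}^s}$, with $(c_j)$ summable and normalized at the end. Since $v\in\dot B^{\frac d2-1}_{2,1}$ guarantees $v\in\mathcal S_h'$, Bernstein gives $\|\dot\Delta_{j'}v\|_{L^2}\approx 2^{-j'}\|\dot\Delta_{j'}\nabla v\|_{L^2}$, so every norm of $v$ can be converted into a norm of $\nabla v$.

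For the first (main) piece, spectral localization gives $[\dot\Delta_j,T_{v^k}]\partial_kf=\sum_{|j'-j|\le4}[\dot\Delta_j,\dot S_{j'-1}v^k]\partial_k\dot\Delta_{j'}f$. The kernel representation $\dot\Delta_j g=2^{jd}h(2^j\cdot)*g$ and a first-order Taylor formula give
\[
\|[\dot\Delta_j,a]g\|_{L^2}\lesssim 2^{-j}\|\nabla a\|_{L^\infty}\|g\|_{L^2}.
\]
Combined with $\|\nabla\dot S_{j'-1}v\|_{L^\infty}\lesssim\|\nabla v\|_{\dot B^{d/2}_{2,1}}$ and $\|\partial_k\dot\Delta_{j'}f\|_{L^2}\lesssim 2^{j'}\|\dot\Delta_{j'}f\|_{L^2}$, this yields the bound with $c_j\propto\sum_{|j'-j|\le4}2^{j's}\|\dot\Delta_{j'}f\|_{L^2}/\|f\|_{\dot B^s_{2,1}}$. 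This piece needs no restriction on $s$.

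The remaining pieces use the paraproduct and remainder laws from \cite{BCD}:
\begin{itemize}
\item[$\bullet$] $T_{\partial_kf}v^k$: bound $\|\dot S_{j'-1}\partial_kf\|_{L^\infty}\lesssim\sum_{j''\le j'-2}2^{j''(\frac d2+1-s)}2^{j''s}\|\dot\Delta_{j''}f\|_{L^2}$. This requires $s\le\frac d2+1$; at the endpoint $s=\frac d2+1$ one uses $\ell^1\subset\ell^\infty$. Then $\|\dot\Delta_{j'}v\|_{L^2}\lesssim 2^{-j'\frac d2}2^{-j'}\cdot 2^{j'\frac d2}\|\dot\Delta_{j'}\nabla v\|_{L^2}$.
\item[$\bullet$] $T'_{\dot\Delta_j\partial_kf}v^k$: treat it the same way, with $\dot\Delta_j\partial_kf$ in $L^\infty$ controlled by $2^{j(\frac d2+1)}\|\dot\Delta_jf\|_{L^2}$ and the sum over $j'\gtrsim j-2$ of $\|\dot\Delta_{j'}v\|_{L^\infty}$ controlled by $2^{-j}\|\nabla v\|_{\dot B^{d/2}_{2,1}}$ up to a convergent geometric factor.
\item[$\bullet$] $R(v^k,\partial_kf)$: because $\dive$ may not vanish, I will not rewrite it as $\partial_kR(v^k,f)$. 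Instead I estimate it directly through $\|\dot\Delta_j\sum_{j'\ge j-3}\dot\Delta_{j'}v\,\widetilde{\dot\Delta}_{j'}\partial_kf\|_{L^2}\le 2^{j\frac d2}\|\cdot\|_{L^1}$. Convergence of the resulting sum $\sum_{j'\ge j-3}2^{(j-j')(s+\frac d2)}$ requires $s>-\frac d2$.
\end{itemize}

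The main obstacle is keeping track of where the endpoint conditions $s>-\frac d2$ (remainder) and $s\le\frac d2+1$ (the $T_{\partial f}v$ term) enter, and checking that each piece produces a summable sequence $c_j$. Once these are verified, summing the four contributions and normalizing the sequence $(c_j)$ to total mass $1$ gives the stated inequality.
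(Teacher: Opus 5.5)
Your proposal is correct and is the classical Bony-decomposition argument of \cite[Lemma 2.100]{BCD}, which is all the paper relies on here (it gives no proof and simply refers to \cite{BCD}). You correctly place the lower bound $s>-\frac d2$ in the remainder, since no divergence-free structure is used, and the endpoint $s=\frac d2+1$ in $T_{\partial_kf}v^k$, where summability comes from $v$.

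The only slip is in the $T'_{\dot\Delta_j\partial_kf}v^k$ term, where you cannot put both factors in $L^\infty$. Either pair $\|\dot\Delta_j\partial_kf\|_{L^2}\lesssim 2^{j}\|\dot\Delta_jf\|_{L^2}$ with $\sum_{j'\ge j-2}\|\dot\Delta_{j'}v\|_{L^\infty}\lesssim 2^{-j}\|\nabla v\|_{\dot B^{\frac d2}_{2,1}}$, or pair $L^\infty$ with $L^2$ using $\sum_{j'\ge j-2}\|\dot\Delta_{j'}v\|_{L^2}\lesssim 2^{-j(\frac d2+1)}\|\nabla v\|_{\dot B^{\frac d2}_{2,1}}$. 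Either choice yields the required bound $\|\dot\Delta_jf\|_{L^2}\|\nabla v\|_{\dot B^{\frac d2}_{2,1}}$ with no restriction on $s$.
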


\section*{Acknowledgements}

The author would like to express his sincere gratitude to D. Han-Kwan and L. Ertzbischoff for introducing him to the multiphase framework and for the time they generously devoted to helping him understand its various aspects. Their insightful explanations, valuable discussions, and constant willingness to share their expertise were greatly appreciated and played an important role in the development of this work.

\end{document}